\renewcommand{\epsilon}{\varepsilon}
\renewcommand{\limsup}{\varlimsup}
\renewcommand{\liminf}{\varliminf}
\newcommand{\abs}[1]{\left|#1\right|}
\newcounter{count}
\newcommand{\num}{\stepcounter{count}\the\value{count}}
\numberwithin{equation}{section}
\newtheorem{theorem}{Theorem}[section]
\newtheorem{lemma}[theorem]{Lemma}
\newtheorem{corollary}[theorem]{Corollary}
\newtheorem{conjecture}[theorem]{Conjecture}
\theoremstyle{definition}
\newcommand{\cI}{\mathcal{I}}
\newcommand{\cN}{\mathcal{N}}
\newcommand{\cR}{\mathcal{R}}
\newcommand{\PS}{\mathrm{PS}}
\newcommand{\AP}{\mathrm{AP}}
\title[Asymptotic and non-asymptotic results for a binary additive problem ...]{Asymptotic and non-asymptotic results \\ for a binary additive problem \\ involving Piatetski-Shapiro numbers}
\author[Y.\ Yoshida]{Yuuya Yoshida}
\address{Yuuya Yoshida\\
Nagoya Institute of Technology\\ Gokiso-cho\\ Showa-ku\\ Nagoya\\ 466-8555\\ Japan}
\curraddr{}
\email{yyoshida9130@gmail.com}
\subjclass[2020]{Primary 11D85 11D04 11D72 11L07, Secondary 11B30 11B25.} 
\keywords{Piatetski-Shapiro sequence, Waring problem, fractional power, exponential sum.} 
\begin{document}
\maketitle

\begin{abstract}
For all $\alpha_1,\alpha_2\in(1,2)$ with $1/\alpha_1+1/\alpha_2>5/3$, 
we show that the number of pairs $(n_1,n_2)$ of positive integers with $N=\lfloor{n_1^{\alpha_1}}\rfloor+\lfloor{n_2^{\alpha_2}}\rfloor$ 
is equal to $\Gamma(1+1/\alpha_1)\Gamma(1+1/\alpha_2)\Gamma(1/\alpha_1+1/\alpha_2)^{-1}N^{1/\alpha_1+1/\alpha_2-1} + o(N^{1/\alpha_1+1/\alpha_2-1})$ 
as $N\to\infty$, 
where $\Gamma$ denotes the gamma function.
Moreover, we show a non-asymptotic result for the same counting problem 
when $\alpha_1,\alpha_2\in(1,2)$ lie in a larger range than the above.
Finally, we give some asymptotic formulas for similar counting problems in a heuristic way.
\end{abstract}

\section{Introduction}\label{intro}

Let $\PS(\alpha)$ be the set $\{ \lfloor{n^\alpha}\rfloor : n\in\mathbb{N} \}$ for $\alpha\ge1$, 
where $\lfloor{x}\rfloor$ (resp.\ $\lceil{x}\rceil$) denotes 
the greatest (resp.\ least) integer $\le x$ (resp.\ $\ge x$) for a real number $x$, 
and $\mathbb{N}$ denotes the set of all positive integers.
The set $\PS(\alpha)$ is called a \textit{Piatetski-Shapiro sequence} when $\alpha>1$ is non-integral.

Segal \cite{Segal1, Segal2} showed that, 
for every $\alpha>1$, there exists a positive integer $s_0(\alpha)$ satisfying the following condition: 
if $s\ge s_0(\alpha)$, then every sufficiently large integer $N\ge1$ has a representation of the form 
\begin{equation}
	N = \lfloor{n_1^\alpha}\rfloor+\cdots+\lfloor{n_s^\alpha}\rfloor
	\label{eqWaring}
\end{equation}
with integers $n_1,\ldots,n_s\ge0$.
This statement can be regarded as a fractional version of the Waring problem, 
since the case $\alpha\in\mathbb{N}$ is just the original Waring problem.
The minimum number of such $s_0(\alpha)$ is often denoted by $G(\alpha)$, 
which many researcher has studied \cite{Segal1,Segal2,Deshouillers1,Arkhipov,Listratenko, Deshouillers2,Gritsenko,Konyagin}.
For large $s$ depending on an exponent $\alpha$, the asymptotic formula for the number of representations is known \cite{Deshouillers1,Arkhipov}.
For example, Arkhipov and Zhitkov \cite{Arkhipov} showed that, 
for every $\alpha>12$ and every integer $s>22\alpha^2(\log\alpha+4)$, 
the number of representations of the form \eqref{eqWaring} with integers $n_1,\ldots,n_s\ge1$ is 
\begin{equation}
	\frac{\Gamma(1+1/\alpha)^s}{\Gamma(s/\alpha)}N^{s/\alpha-1} + o(N^{s/\alpha-1})
	\quad(N\to\infty). \label{eqWaring2}
\end{equation}
However, one cannot take small $s$ such as $s=2$ in this statement.

The case $s=2$ has been studied mainly on the existence of a representation \cite{Deshouillers2,Gritsenko,Konyagin}.
Deshouillers \cite{Deshouillers2} showed that, for every $\alpha\in(1,4/3)$ and every sufficiently large integer $N\ge1$, 
there exists a pair $(n_1,n_2)$ of positive integers such that 
\begin{equation}
	N = \lfloor{n_1^\alpha}\rfloor+\lfloor{n_2^\alpha}\rfloor.
	\label{eqWaring3}
\end{equation}
Gritsenko \cite{Gritsenko} and Konyagin \cite{Konyagin} improved the above range $(1,4/3)$ to $(1,55/41)$ and $(1,3/2)$, respectively.
One might expect that the asymptotic formula for the number of such pairs $(n_1,n_2)$ is obtained by formally substituting $s=2$ into \eqref{eqWaring2}, 
but we were not able to find any references showing it rigorously.

In this paper, we show asymptotic and non-asymptotic results for the number of the above pairs $(n_1,n_2)$.
A way of proving them, except for estimating error terms, is simple and useful to solve any other counting problems involving Piatetski-Shapiro numbers, 
especially, in solving them heuristically (see Section~\ref{heuristic}).
For real numbers $\alpha_1,\alpha_2>1$ and an integer $N\ge1$, define the number $\cR_{\alpha_1,\alpha_2}(N)$ as 
\[
\cR_{\alpha_1,\alpha_2}(N) = \#\{ (n_1,n_2)\in\mathbb{N}^2 : \lfloor{n_1^{\alpha_1}}\rfloor+\lfloor{n_2^{\alpha_2}}\rfloor=N \}.
\]
When $\alpha_1=\alpha_2=\alpha$, we write $\cR_\alpha(N)$ instead of $\cR_{\alpha_1,\alpha_2}(N)$.
Although we state Theorems~\ref{main1}, \ref{main2}, and \ref{mainDes2} below, 
they are proved in Section~\ref{proof} and Appendix.

\subsection{Asymptotic results}

First, we state asymptotic results.

\begin{theorem}\label{main1}
	Let $\alpha_1,\alpha_2\in(1,2)$ and $1/\alpha_1+1/\alpha_2>5/3$.
	Then 
	\[
	\lim_{N\to\infty} \frac{\cR_{\alpha_1,\alpha_2}(N)}{N^{1/\alpha_1+1/\alpha_2-1}}
	= \alpha_1^{-1}\alpha_2^{-1}B(1/\alpha_1,1/\alpha_2)
	= \frac{\Gamma(1+1/\alpha_1)\Gamma(1+1/\alpha_2)}{\Gamma(1/\alpha_1+1/\alpha_2)}.
	\]
	where $B$ and $\Gamma$ denote the beta function and the gamma function, respectively.
\end{theorem}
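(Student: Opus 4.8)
The plan is to count lattice points by a direct asymptotic argument. Write $\cR_{\alpha_1,\alpha_2}(N)=\sum_{n_1\ge1}\#\{n_2\ge1:\lfloor n_2^{\alpha_2}\rfloor=N-\lfloor n_1^{\alpha_1}\rfloor\}$. For a fixed target integer $m\ge1$, the inner count is $1$ if $m\in\PS(\alpha_2)$ and $0$ otherwise; using the standard observation that $\lfloor n_2^{\alpha_2}\rfloor=m$ has a solution iff $\lceil (m)^{1/\alpha_2}\rceil\le (m+1)^{1/\alpha_2}$, one sees the indicator of $m\in\PS(\alpha_2)$ equals $\lfloor (m+1)^{1/\alpha_2}\rfloor-\lfloor m^{1/\alpha_2}\rfloor$, which in turn equals $\lfloor -m^{1/\alpha_2}\rfloor-\lfloor -(m+1)^{1/\alpha_2}\rfloor$. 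Thus
\[
\cR_{\alpha_1,\alpha_2}(N)=\sum_{1\le n_1\le (N+1)^{1/\alpha_1}}\Bigl(\bigl\lfloor(N+1-\lfloor n_1^{\alpha_1}\rfloor)^{1/\alpha_2}\bigr\rfloor-\bigl\lfloor(N-\lfloor n_1^{\alpha_1}\rfloor)^{1/\alpha_2}\bigr\rfloor\Bigr),
\]
with the usual convention that a floor of a negative argument is handled via $\lfloor x\rfloor=x-\psi(x)-\tfrac12$ where $\psi(x)=\{x\}-\tfrac12$ is the sawtooth function; I would set up the identity so that only nonnegative arguments occur by restricting $n_1$ appropriately and treating the boundary terms separately.

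Next I would replace each floor by its main term plus the sawtooth: $\lfloor t\rfloor = t-\tfrac12-\psi(t)$. The contribution of the main terms is
\[
\sum_{1\le n_1\le (N+1)^{1/\alpha_1}}\Bigl((N+1-\lfloor n_1^{\alpha_1}\rfloor)^{1/\alpha_2}-(N-\lfloor n_1^{\alpha_1}\rfloor)^{1/\alpha_2}\Bigr),
\]
and by the mean value theorem this is $\tfrac{1}{\alpha_2}\sum_{n_1}(N-\lfloor n_1^{\alpha_1}\rfloor)^{1/\alpha_2-1}+(\text{lower order})$. Approximating $\lfloor n_1^{\alpha_1}\rfloor$ by $n_1^{\alpha_1}$ (the error again handled by a sawtooth term) and the sum by an integral via Euler–Maclaurin, I get $\tfrac{1}{\alpha_2}\int_0^{N^{1/\alpha_1}}(N-t^{\alpha_1})^{1/\alpha_2-1}\,dt$. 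The substitution $t=N^{1/\alpha_1}u^{1/\alpha_1}$ turns this into $\tfrac{1}{\alpha_1\alpha_2}N^{1/\alpha_1+1/\alpha_2-1}\int_0^1 u^{1/\alpha_1-1}(1-u)^{1/\alpha_2-1}\,du=\alpha_1^{-1}\alpha_2^{-1}B(1/\alpha_1,1/\alpha_2)\,N^{1/\alpha_1+1/\alpha_2-1}$, which is the claimed main term; the identification with the gamma-function expression is the standard beta–gamma formula together with $\Gamma(1+x)=x\Gamma(x)$.

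The hard part is showing that all the error terms are $o(N^{1/\alpha_1+1/\alpha_2-1})$, and this is exactly where the hypothesis $1/\alpha_1+1/\alpha_2>5/3$ enters. After the substitutions above, the error is a sum over $n_1$ of expressions involving $\psi\bigl((N-\lfloor n_1^{\alpha_1}\rfloor)^{1/\alpha_2}\bigr)$ and $\psi(n_1^{\alpha_1})$-type terms weighted by smooth factors, plus cross terms. I would bound these by splitting the range of $n_1$ dyadically and, on each piece, approximating $\psi$ by a truncated Fourier series (Vaaler's or the classical Vinogradov lemma) to reduce to exponential sums of the shape $\sum_{n_1\sim M}e(h(N-n_1^{\alpha_1})^{1/\alpha_2})$ and $\sum_{n_1\sim M}e(hn_1^{\alpha_1})$. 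These are estimated by van der Corput's method (the $A$- and $B$-processes, i.e.\ Weyl differencing and the exponent-pair machinery); since $\alpha_1,\alpha_2\in(1,2)$ the relevant phases have the right second-derivative size, and the saving one gets is governed by the size of the exponents, so that the admissible region for the method to beat the main term is precisely $1/\alpha_1+1/\alpha_2>5/3$ (the exponent $5/3$ comes from using the simplest nontrivial exponent pair). Carrying out this exponential-sum analysis carefully — in particular handling the $n_1$ near the endpoint $N^{1/\alpha_1}$, where $N-n_1^{\alpha_1}$ is small and the phase degenerates — is the main technical obstacle, and it is what I would defer to the detailed proof in Section~\ref{proof}.
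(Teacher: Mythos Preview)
Your main-term computation is correct. The gap is in the error analysis: you cannot reduce to the one-variable sums $\sum_{n_1\sim M}e\bigl(h(N-n_1^{\alpha_1})^{1/\alpha_2}\bigr)$ as claimed, because the sawtooth that actually appears has argument $(N-\lfloor n_1^{\alpha_1}\rfloor)^{1/\alpha_2}$, with a floor inside. Since $\psi$ is discontinuous, replacing $\lfloor n_1^{\alpha_1}\rfloor$ by $n_1^{\alpha_1}$ inside $\psi$ is not a ``sawtooth correction'' but an uncontrolled change. If instead you change variables to $n=\lfloor n_1^{\alpha_1}\rfloor$ (which is what the paper does from the outset), the error becomes $\sum_{n\in\PS(\alpha_1)}\psi\bigl((N-n)^{1/\alpha_2}\bigr)$; detecting $n\in\PS(\alpha_1)$ forces a second Fourier expansion, and the cross terms are the genuinely two-variable sums $\sum_{n}e\bigl(h_1n^{1/\alpha_1}+h_2(N-n)^{1/\alpha_2}\bigr)$ with $1\le n<N$. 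The paper sets this up directly via the identity $\cR_{\alpha_1,\alpha_2}(N)=\#\{1\le n<N:\{-n^{1/\alpha_1}\}<\phi_{\alpha_1}(n),\ \{-(N-n)^{1/\alpha_2}\}<\phi_{\alpha_2}(N-n)\}$ and a two-dimensional Koksma inequality (Lemma~\ref{Koksma}).

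You also mislocate the source of the threshold $5/3$. It does not come from ``the simplest nontrivial exponent pair'' applied to a single-phase sum; it comes from the two-variable sum in the range $h_1h_2<0$, where $h_1f_1''+h_2f_2''$ can nearly cancel and the second-derivative test is useless. There the paper applies the third-derivative test (Lemma~\ref{3rdderiv}), producing an error of size $N^{2/3}$, and $N^{2/3}=o(N^{1/\alpha_1+1/\alpha_2-1})$ is exactly $1/\alpha_1+1/\alpha_2>5/3$. When $h_1h_2>0$ or only one $h_i$ is nonzero there is no such cancellation, the second-derivative test suffices, and only $1/\alpha_1+1/\alpha_2>3/2$ is needed---which is why the non-asymptotic Theorem~\ref{main2} has the wider range.
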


The following corollary follows from Theorem~\ref{main1} immediately.

\begin{corollary}\label{main1'}
	Let $\alpha\in(1,6/5)$. Then 
	\[
	\lim_{N\to\infty} \frac{\cR_\alpha(N)}{N^{2/\alpha-1}}
	= \frac{\Gamma(1+1/\alpha)^2}{\Gamma(2/\alpha)}.
	\]
\end{corollary}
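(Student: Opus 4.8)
The plan is to reduce the counting problem $\cR_{\alpha_1,\alpha_2}(N)$ to a problem about counting lattice points under a curve, and then to handle the resulting error terms by exponential-sum estimates. The starting point is the elementary observation that $\lfloor{n^\alpha}\rfloor = m$ for some $m$ precisely when $n$ lies in the interval $[m^{1/\alpha},(m+1)^{1/\alpha})$; equivalently, writing $\psi(x)=x-\lfloor{x}\rfloor-1/2$ for the sawtooth function, the indicator that $m\in\PS(\alpha)$ is $\lceil{(m+1)^{1/\alpha}}\rceil - \lceil{m^{1/\alpha}}\rceil$, which expands as $(m+1)^{1/\alpha}-m^{1/\alpha} - \psi(-(m+1)^{1/\alpha}) + \psi(-m^{1/\alpha}) + O(\text{boundary})$ type terms. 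Thus
\[
\cR_{\alpha_1,\alpha_2}(N) = \sum_{\substack{m_1+m_2=N \\ m_1,m_2\ge1}} \mathbf{1}_{\PS(\alpha_1)}(m_1)\,\mathbf{1}_{\PS(\alpha_2)}(m_2),
\]
and one substitutes the above expansion for each indicator. The product of the two ``main'' pieces $\bigl((m_1+1)^{1/\alpha_1}-m_1^{1/\alpha_1}\bigr)\bigl((m_2+1)^{1/\alpha_2}-m_2^{1/\alpha_2}\bigr)$, summed over $m_1+m_2=N$, is $\approx \alpha_1^{-1}\alpha_2^{-1}\sum_{m_1+m_2=N} m_1^{1/\alpha_1-1}m_2^{1/\alpha_2-1}$, which by comparison with the corresponding integral is asymptotic to $\alpha_1^{-1}\alpha_2^{-1}B(1/\alpha_1,1/\alpha_2)N^{1/\alpha_1+1/\alpha_2-1}$; this is exactly the claimed constant, since $B(1/\alpha_1,1/\alpha_2)=\Gamma(1/\alpha_1)\Gamma(1/\alpha_2)/\Gamma(1/\alpha_1+1/\alpha_2)$ and $\alpha_i^{-1}\Gamma(1/\alpha_i)=\Gamma(1+1/\alpha_i)$.

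The real work is to show that all the remaining cross terms — those involving at least one sawtooth factor $\psi$ — contribute only $o(N^{1/\alpha_1+1/\alpha_2-1})$. After expanding $\psi$ into its truncated Fourier series $\psi(x) = -\sum_{1\le h\le H} \frac{\sin(2\pi h x)}{\pi h} + O\!\bigl(\min(1, H^{-1}\|x\|^{-1})\bigr)$ (Vaaler's approximation), each such term becomes an exponential sum of the shape $\sum_{m_1+m_2=N} e(h_1 m_1^{1/\alpha_1} + h_2 m_2^{1/\alpha_2})$ (with possibly $h_2=0$, or $h_1=0$), weighted by smooth factors of size $m_i^{1/\alpha_i-1}$. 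One then needs nontrivial cancellation in these sums. For a single-variable sum $\sum_{m\le N} m^{1/\alpha-1} e(h\, m^{1/\alpha})$ one applies the standard van der Corput / exponent-pair machinery (the second-derivative test or the estimate via the exponent pair $(1/6,2/3)$ after partial summation to remove the smooth weight); the saving one gets is governed by the size of the second derivative of $m^{1/\alpha}$, namely $\asymp N^{1/\alpha-2}$, producing a bound like $N^{1/\alpha-1}\cdot N^{1/2}(N^{1/\alpha-2})^{1/2}\cdot\log$-type factors after summing over $h\le H$ and choosing $H$ optimally. Carrying this through, the condition $1/\alpha_1+1/\alpha_2>5/3$ is exactly what is needed to make the total error strictly smaller than $N^{1/\alpha_1+1/\alpha_2-1}$; the numerology $5/3$ comes from balancing the exponent-pair exponent $1/6+2/3$ against the main-term exponent.

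The main obstacle I anticipate is the two-dimensional exponential sum $\sum_{m_1+m_2=N} m_1^{1/\alpha_1-1}m_2^{1/\alpha_2-1}\, e(h_1 m_1^{1/\alpha_1}+h_2 m_2^{1/\alpha_2})$ with both $h_1,h_2\neq0$: because $m_1+m_2=N$ couples the two variables, this is effectively a one-dimensional sum $\sum_{m\le N}$ of $e\bigl(h_1 m^{1/\alpha_1}+h_2(N-m)^{1/\alpha_2}\bigr)$, whose phase has second derivative $\asymp h_1 m^{1/\alpha_1-2}+h_2(N-m)^{1/\alpha_2-2}$. This can degenerate (near-vanishing second derivative) when $h_1,h_2$ have opposite sign and are comparable in a suitable ratio, so one must split the range of $m$ into the ``near-stationary'' part, treated by a third-derivative (van der Corput) estimate or trivially on a short interval, and the ``away'' part, treated by the second-derivative test; summing over $h_1,h_2\le H$ and optimizing $H$ then gives the bound. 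I would also need to take a little care with the genuinely boundary/degenerate ranges $m_i \ll$ (small) and $m_i$ near $N$, where the weights $m_i^{1/\alpha_i-1}$ blow up mildly but are integrable, handling those dyadic blocks separately. Once these exponential-sum bounds are in place, assembling everything and letting $H\to\infty$ (slowly with $N$) completes the proof of Theorem~\ref{main1}; Corollary~\ref{main1'} is then just the case $\alpha_1=\alpha_2=\alpha$, for which $1/\alpha_1+1/\alpha_2>5/3 \iff \alpha<6/5$.
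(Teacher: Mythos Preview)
Your plan is correct and essentially mirrors the paper's proof of Theorem~\ref{main1}, from which Corollary~\ref{main1'} follows immediately by setting $\alpha_1=\alpha_2=\alpha$ (so that $1/\alpha_1+1/\alpha_2>5/3\iff\alpha<6/5$). The only real difference is packaging: rather than expanding each indicator via Vaaler's approximation and multiplying out, the paper first rewrites $\cR_{\alpha_1,\alpha_2}(N)$ as the count of $n\in[1,N)$ with $\{-n^{1/\alpha_1}\}<\phi_{\alpha_1}(n)$ and $\{-(N-n)^{1/\alpha_2}\}<\phi_{\alpha_2}(N-n)$, and then invokes Koksma's two-dimensional Erd\H{o}s--Tur\'an inequality (Lemma~\ref{Koksma}) to pass to the same exponential sums $\sum_n e\bigl(h_1 n^{1/\alpha_1}+h_2(N-n)^{1/\alpha_2}\bigr)$ you describe. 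Your diagnosis of the main obstacle---the near-vanishing second derivative when $h_1h_2<0$, forcing a third-derivative estimate that produces the $N^{2/3}$ term and hence the threshold $5/3$---is exactly what drives the paper's Lemma~\ref{lem02'} (Step~4 there uses Lemma~\ref{3rdderiv}); the paper then separates the tails $n\le\epsilon N$ and $n\ge(1-\epsilon)N$ (Lemma~\ref{lem03}) from the bulk (Lemma~\ref{lem03'}), which corresponds to your dyadic treatment of the boundary ranges.
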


The asymptotic formula in Corollary~\ref{main1'} is the same as that obtained by formally substituting $s=2$ into \eqref{eqWaring2}.
Also, by considering the sum $\sum_{1\le n<x} \cR_\alpha(\lfloor{n^\alpha}\rfloor)$, 
we obtain the following corollary.

\begin{corollary}\label{main1''}
	Let $\alpha\in(1,6/5)$. Then 
	\begin{equation}
		\lim_{x\to\infty} \frac{\#\{ (l,m,n)\in\mathbb{N}^3 : n<x,\ 
		\lfloor{l^\alpha}\rfloor+\lfloor{m^\alpha}\rfloor=\lfloor{n^\alpha}\rfloor \}}{x^{3-\alpha}}
		= \frac{\Gamma(1+1/\alpha)^2}{(3-\alpha)\Gamma(2/\alpha)}.
		\label{eq03}
	\end{equation}
\end{corollary}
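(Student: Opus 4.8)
The plan is to reduce Corollary~\ref{main1''} to Corollary~\ref{main1'} by summing the pointwise asymptotic $\cR_\alpha(\lfloor{n^\alpha}\rfloor)\sim C_\alpha (\lfloor{n^\alpha}\rfloor)^{2/\alpha-1}$ over $n<x$, where $C_\alpha=\Gamma(1+1/\alpha)^2/\Gamma(2/\alpha)$. First I would observe that the left-hand side of \eqref{eq03} counts exactly $\sum_{1\le n<x}\cR_\alpha(\lfloor{n^\alpha}\rfloor)$, since for each fixed $n$ the pairs $(l,m)$ with $\lfloor{l^\alpha}\rfloor+\lfloor{m^\alpha}\rfloor=\lfloor{n^\alpha}\rfloor$ are counted by $\cR_\alpha(\lfloor{n^\alpha}\rfloor)$ by definition. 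So the task is purely to evaluate the asymptotics of this sum.

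Next I would use Corollary~\ref{main1'}: for each $\epsilon>0$ there is $N_0$ such that $(C_\alpha-\epsilon)N^{2/\alpha-1}\le\cR_\alpha(N)\le(C_\alpha+\epsilon)N^{2/\alpha-1}$ for all $N\ge N_0$. Since $\lfloor{n^\alpha}\rfloor\to\infty$, only finitely many $n$ (a number independent of $x$) fail this, and their total contribution is $O(1)$, hence negligible after dividing by $x^{3-\alpha}$. For the remaining $n$ I would sandwich $\sum_n\cR_\alpha(\lfloor{n^\alpha}\rfloor)$ between $(C_\alpha\mp\epsilon)\sum_{n<x}\lfloor{n^\alpha}\rfloor^{2/\alpha-1}$. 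Then, writing $\lfloor{n^\alpha}\rfloor=n^\alpha+O(1)$ and noting $2/\alpha-1\in(0,1)$ for $\alpha\in(1,6/5)$, I get $\lfloor{n^\alpha}\rfloor^{2/\alpha-1}=n^{2-\alpha}(1+O(n^{-\alpha}))=n^{2-\alpha}+O(n^{2-2\alpha})$, and summing, $\sum_{n<x}\lfloor{n^\alpha}\rfloor^{2/\alpha-1}=\sum_{n<x}n^{2-\alpha}+O\!\big(\sum_{n<x}n^{2-2\alpha}\big)$. The error sum is $O(x^{3-2\alpha})+O(\log x)+O(1)=o(x^{3-\alpha})$ since $\alpha>1$, and the main sum is $\sum_{n<x}n^{2-\alpha}=x^{3-\alpha}/(3-\alpha)+O(x^{2-\alpha})$ by comparison with $\int_1^x t^{2-\alpha}\,dt$ (or by the Euler--Maclaurin/integral-test estimate, valid since $t\mapsto t^{2-\alpha}$ is increasing). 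Dividing by $x^{3-\alpha}$ and letting $x\to\infty$ then $\epsilon\to0$ yields the limit $C_\alpha/(3-\alpha)$, which is the claim.

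The main obstacle, if any, is the interchange of the limit with the summation — i.e., justifying that the finitely many exceptional terms and the accumulated error terms from replacing $\cR_\alpha$ by its asymptotic really are $o(x^{3-\alpha})$ uniformly. This is handled cleanly by the $\epsilon$-sandwich argument above: the point is that the exceptional set $\{n:\lfloor{n^\alpha}\rfloor<N_0\}$ is finite and fixed once $\epsilon$ is chosen, so its contribution does not grow with $x$, and the growing error $O(\sum_{n<x}n^{2-2\alpha})$ is genuinely of smaller order than $x^{3-\alpha}$ because $\alpha>1$ forces $3-2\alpha<3-\alpha$. No new analytic input beyond Corollary~\ref{main1'} and elementary summation is needed.
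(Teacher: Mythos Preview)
Your proposal is correct and follows exactly the approach the paper indicates: the paper simply says ``by considering the sum $\sum_{1\le n<x}\cR_\alpha(\lfloor n^\alpha\rfloor)$, we obtain the following corollary,'' and you have supplied precisely the routine details of that summation (the $\epsilon$-sandwich from Corollary~\ref{main1'}, the replacement $\lfloor n^\alpha\rfloor^{2/\alpha-1}=n^{2-\alpha}+O(n^{2-2\alpha})$, and the integral comparison for $\sum_{n<x}n^{2-\alpha}$). There is nothing to add.
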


The asymptotic formula in Corollary~\ref{main1''} is probably true if $\alpha\in(1,2)\cup(2,3)$.
We check this conjecture in a heuristic way in Section~\ref{heuristic}.
However, the case $\alpha=2$ does not hold because it is known \cite{Stronina} that 
\begin{equation}
\begin{split}
	&\quad \#\{ (l,m,n)\in\mathbb{N}^3 : n<x,\ l^2+m^2=n^2 \}\\
	&= \frac{1}{\pi}x\log x + Bx + O\bigl( x^{1/2}\exp(-C(\log x)^{3/5}(\log\log x)^{-1/5}) \bigr) \quad (x\to\infty)
\end{split}\label{eqN3}
\end{equation}
for an explicit constant $B$ and some $C>0$.
Hence, the case $\alpha=2$ is a kind of singularity.
We can observe this situation by numerical computation (see Section~\ref{heuristic}).

\subsection{Non-asymptotic results}

Next, we state non-asymptotic results.
In this case, we can improve the assumption $1/\alpha_1+1/\alpha_2 > 5/3$ of Theorem~\ref{main1}.

\begin{theorem}\label{main2}
	Let $\alpha_1,\alpha_2\in(1,2)$ and $1/\alpha_1+1/\alpha_2>3/2$.
	Then, for every integer $N\ge1$, 
	\[
	\cR_{\alpha_1,\alpha_2}(N) \ll N^{1/\alpha_1+1/\alpha_2-1},
	\]
	where the implicit constant is absolute.
\end{theorem}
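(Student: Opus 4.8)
The plan is to count the pairs $(n_1,n_2)\in\mathbb N^2$ with $\lfloor n_1^{\alpha_1}\rfloor+\lfloor n_2^{\alpha_2}\rfloor=N$ directly, without extracting the main term. The key observation is that for a fixed value $m$, the number of $n_1\in\mathbb N$ with $\lfloor n_1^{\alpha_1}\rfloor=m$ is either $0$ or $1$ when $\alpha_1>1$, since $n\mapsto n^{\alpha_1}$ has derivative exceeding $1$ for $n\ge1$; more precisely, the indicator of $m\in\PS(\alpha_1)$ equals $\lfloor(-m)^{1/\alpha_1}\rfloor-\lfloor(-m-1)^{1/\alpha_1}\rfloor$ (or equivalently $\lceil(m+1)^{1/\alpha_1}\rceil-\lceil m^{1/\alpha_1}\rceil$), so that $\cR_{\alpha_1,\alpha_2}(N)=\sum_{m=0}^{N}\mathbf 1_{\PS(\alpha_1)}(m)\,\mathbf 1_{\PS(\alpha_2)}(N-m)$. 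I would expand each indicator via the identity $\mathbf 1_{\PS(\alpha)}(m)=\lceil(m+1)^{1/\alpha}\rceil-\lceil m^{1/\alpha}\rceil=1+\lfloor-(m+1)^{1/\alpha}\rfloor-\lfloor-m^{1/\alpha}\rfloor$ and then use the sawtooth function $\psi(x)=x-\lfloor x\rfloor-1/2$ to write $\mathbf 1_{\PS(\alpha)}(m)$ as a "smooth" piece of size $\asymp m^{1/\alpha-1}$ plus sawtooth terms $\psi\bigl((m+1)^{1/\alpha}\bigr)-\psi\bigl(m^{1/\alpha}\bigr)$ and similarly with $-$ signs.

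Substituting these expansions into the convolution sum $\sum_{m}\mathbf 1_{\PS(\alpha_1)}(m)\mathbf 1_{\PS(\alpha_2)}(N-m)$ produces a main term of the expected order $N^{1/\alpha_1+1/\alpha_2-1}$ from the product of the two smooth pieces (here one just needs the crude bound $\sum_{m\le N/2} m^{1/\alpha_1-1}(N-m)^{1/\alpha_2-1}\ll N^{1/\alpha_1+1/\alpha_2-1}$, valid since each exponent lies in $(-1/2,0)$, plus the symmetric range), together with cross terms that pair a smooth factor against a sawtooth factor, and a term that pairs two sawtooth factors. For the mixed terms I would sum by parts in $m$: writing $S(M)=\sum_{m\le M}\psi\bigl((N-m)^{1/\alpha_2}\bigr)$ and using the classical bound $\sum_{m\le M}\psi(f(m))\ll$ (something) for $f$ with a controlled second derivative — this is exactly where the exponential-sum input enters, via Van der Corput's estimate $\sum_{M<m\le 2M}e(f(m))\ll M|f''|^{1/2}+|f''|^{-1/2}$ applied to $f(m)=h(N-m)^{1/\alpha_2}$ after truncating the Fourier expansion of $\psi$ at height $H$. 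The exponent $3/2$ for $1/\alpha_1+1/\alpha_2$ should emerge from optimizing $H$ against the second-derivative size $N^{1/\alpha_2-2}$ in these estimates, exactly as in the Deshouillers–Gritsenko–Konyagin circle of ideas, but since we only want an upper bound of the right order (not an asymptotic) the bookkeeping is lighter.

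The main obstacle, as the paper itself flags, is the error-term estimation: making the sawtooth-against-sawtooth term and the sawtooth-against-smooth terms genuinely $O(N^{1/\alpha_1+1/\alpha_2-1})$ uniformly, and in particular handling the ranges of $m$ near the endpoints $0$ and $N$ where $n_1$ or $n_2$ is small and the Van der Corput bounds degrade — there one falls back on the trivial bound "at most one $n$ per value of $\lfloor n^{\alpha}\rfloor$," which contributes only $O(N^{1/\max(\alpha_1,\alpha_2)})$, a lower-order term under $1/\alpha_1+1/\alpha_2>3/2$. I would organize the proof so that the double sawtooth sum is reduced, via Fourier expansion of both $\psi$'s and truncation, to a double exponential sum $\sum_{h_1,h_2}\frac1{h_1h_2}\sum_m e\bigl(h_1 m^{1/\alpha_1}\text{-type phase}+h_2(N-m)^{1/\alpha_2}\text{-type phase}\bigr)$, estimate the inner sum by a first- or second-derivative test (the combined phase has derivative bounded away from $0$ unless $h_1,h_2$ are comparably weighted, in which case the second derivative is $\asymp h N^{1/\alpha-2}$), and sum over $h_1,h_2\le H$ with $H$ chosen as a small power of $N$. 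Since no main term needs to be isolated, I expect this to go through with the weaker hypothesis $1/\alpha_1+1/\alpha_2>3/2$, matching Konyagin's range in the diagonal case $\alpha_1=\alpha_2$.
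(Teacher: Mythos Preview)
Your overall architecture---rewrite $\cR_{\alpha_1,\alpha_2}(N)$ as a sum of products of indicators, expand each indicator as a smooth piece plus a sawtooth, truncate the Fourier expansion of the sawtooth, and bound the resulting exponential sums by first- and second-derivative tests---is exactly what the paper does. The paper packages the double Fourier expansion via Koksma's two-dimensional discrepancy lemma (Lemma~\ref{Koksma}) and then feeds the exponential sums into Lemmas~\ref{1stderiv}--\ref{2ndderiv} and the exponent pair $(1/2,1/2)$; your ``smooth plus sawtooth'' expansion is the same thing written out by hand.

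There is, however, a genuine gap in your endpoint treatment. You write that near $m=0$ and $m=N$ the trivial bound ``at most one $n$ per value of $\lfloor n^\alpha\rfloor$'' contributes $O(N^{1/\max(\alpha_1,\alpha_2)})$ and that this is lower order under $1/\alpha_1+1/\alpha_2>3/2$. This is false: for $\alpha_1,\alpha_2\in(1,2)$ one always has
\[
\frac{1}{\alpha_1}+\frac{1}{\alpha_2}-1 \;<\; \min\Bigl(\frac{1}{\alpha_1},\frac{1}{\alpha_2}\Bigr)
\;=\;\frac{1}{\max(\alpha_1,\alpha_2)},
\]
since $\max(1/\alpha_1,1/\alpha_2)<1$. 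So the trivial bound is \emph{larger} than the target $N^{1/\alpha_1+1/\alpha_2-1}$, not smaller, and cannot be used to dispose of any range of $m$ of length $\asymp N$ (or even of length $N^{1-\delta}$ for small enough $\delta$). Concretely, in the diagonal case $\alpha_1=\alpha_2=\alpha$ the trivial bound is $N^{1/\alpha}$ while the target is $N^{2/\alpha-1}$, and $1/\alpha>2/\alpha-1$ for every $\alpha>1$.

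This is precisely the point where the paper invests most of its work. The middle range $m\in(\epsilon N,(1-\epsilon)N]$ is handled by Lemma~\ref{lem03''} via a straightforward application of Koksma plus second-derivative bounds (Lemma~\ref{lem02} with $\beta_1=\beta_2=0$), much as you describe. But the tails $m\le\epsilon N$ and $m\ge(1-\epsilon)N$ require a separate argument (Lemma~\ref{lem03}): a dyadic decomposition $m\sim 2^j$ together with a delicate choice of truncation heights $H_1,H_2$ depending on $j$ (encoded by the auxiliary parameters $\beta_1,\beta_2$ of Lemma~\ref{lem02}), and the admissible ranges for these parameters are worked out in Lemmas~\ref{lem01}, \ref{lem01'}, \ref{lem01''}. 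Only after this does one obtain a tail contribution of size $\ll(\epsilon^{1/\alpha_1}+\epsilon^{1/10})N^{1/\alpha_1+1/\alpha_2-1}$, which is of the right order. Without this step your argument does not close.
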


Theorem~\ref{main2} is still true even if replacing the assumption $1/\alpha_1+1/\alpha_2>3/2$ and the inequality $\ll$ 
with $1/\alpha_1+1/\alpha_2 \ge 3/2$ and $\ll_{\alpha_1,\alpha_2}$, respectively.
However, we show only the case $1/\alpha_1+1/\alpha_2>3/2$ in this paper for simplicity.

For a lower bound, we can use and generalize Deshouillers' proof, 
since he \cite{Deshouillers2} showed that $\cR_\alpha(N) \gg N^{2/\alpha-1}$ as $N\to\infty$ for every $\alpha\in(1,4/3)$.
From now on, denote by $\alpha_{\max}$ (resp.\ $\alpha_{\min}$) the maximum (resp.\ minimum) number of $\alpha_1$ and $\alpha_2$.

\begin{theorem}\label{mainDes2}
	Let $\alpha_1,\alpha_2\in(1,3/2)$, $1/\alpha_1+1/\alpha_2>3/2$, and $6/\alpha_{\max}+1/\alpha_{\min}>13/3$.
	Then 
	\[
	\cR_{\alpha_1,\alpha_2}(N) \gg N^{1/\alpha_1+1/\alpha_2-1}
	\quad(N\to\infty),
	\]
	where the implicit constant is absolute.
\end{theorem}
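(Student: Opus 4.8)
The plan is to adapt Deshouillers' lower-bound argument for $\cR_\alpha(N)\gg N^{2/\alpha-1}$ to the two-exponent setting. First I would fix $N$ large and restrict attention to pairs $(n_1,n_2)$ with $n_i \asymp N^{1/\alpha_i}$, say $n_1$ ranging over a dyadic-type interval $I_1$ of length $\asymp N^{1/\alpha_1}$ chosen so that the "trivial" count of such pairs is $\asymp N^{1/\alpha_1+1/\alpha_2-1}$. Writing $\lfloor n_1^{\alpha_1}\rfloor + \lfloor n_2^{\alpha_2}\rfloor = N$ as $\{n_1^{\alpha_1}\} + \{n_2^{\alpha_2}\} = n_1^{\alpha_1} + n_2^{\alpha_2} - N$, one sees that a solution exists for $(n_1,n_2)$ precisely when $n_1^{\alpha_1} + n_2^{\alpha_2}$ lies in an interval of length $1$ around $N$; equivalently, after summing the indicator $\mathbf 1_{[0,1)}(n_1^{\alpha_1}+n_2^{\alpha_2}-N)$ (suitably interpreted via the fractional parts) one wants to show the sum is $\gg N^{1/\alpha_1+1/\alpha_2-1}$. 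Expanding the indicator of the relevant interval into its Fourier series $\sum_h c_h e(h\,\cdot\,)$ with $c_0$ giving the expected main term of size $\asymp N^{1/\alpha_1+1/\alpha_2-1}$, the task reduces to bounding the contribution of the frequencies $h\neq0$.

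The main term: for $h=0$ the sum over $(n_1,n_2)$ of the "length factor" is a Riemann sum approximating $\alpha_1^{-1}\alpha_2^{-1}B(1/\alpha_1,1/\alpha_2)N^{1/\alpha_1+1/\alpha_2-1}$, exactly as in Theorem~\ref{main1}; this piece is positive and of the right order. The error term: for each $h\neq0$ the inner sum over $n_2$ (with $n_1$ fixed) is an exponential sum $\sum_{n_2} e(h n_2^{\alpha_2} + \beta n_2 + \gamma)$ to which one applies van der Corput's estimates (the second-derivative test, or a higher-order $k$th-derivative/Weyl-type estimate). Summing the resulting bounds over $n_1 \in I_1$ and over $1\le |h| \le H$ for a suitable cutoff $H$ (with the Fourier tail $\sum_{|h|>H}|c_h|$ controlled by truncating the indicator smoothly, as Deshouillers does), one needs the total error to be $o(N^{1/\alpha_1+1/\alpha_2-1})$, i.e.\ strictly smaller than the main term. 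This is where the hypotheses $\alpha_1,\alpha_2\in(1,3/2)$ and $6/\alpha_{\max}+1/\alpha_{\min}>13/3$ enter: the first keeps the exponential sums in a range where van der Corput is efficient, and the precise numerical condition is exactly what is required for the worst-case error bound — governed by the larger exponent $\alpha_{\max}$, which produces the weakest cancellation — to beat $N^{1/\alpha_1+1/\alpha_2-1}$. The constraint $1/\alpha_1+1/\alpha_2>3/2$ is carried along from Theorem~\ref{main2} / the structure of the main term.

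Concretely, the steps in order: (1) set up the counting sum with the localization $n_i\asymp N^{1/\alpha_i}$ and reduce, via Deshouillers' smoothing of the unit interval's indicator, to a main term plus a sum $\sum_{1\le|h|\le H}\sum_{n_1\in I_1}|S(h,n_1)|$ where $S(h,n_1)=\sum_{n_2}e(hn_2^{\alpha_2}+\cdots)$; (2) extract the main term as a Riemann sum and confirm it is $\gg N^{1/\alpha_1+1/\alpha_2-1}$; (3) apply van der Corput's $k$th-derivative test to $S(h,n_1)$, optimizing $k$ against $\alpha_2$ (and symmetrically handling the role of $\alpha_1$), to get $|S(h,n_1)|\ll$ a power of $N$ and $h$; (4) sum over $h\le H$ and $n_1\in I_1$, choose $H$ to balance against the truncation error, and verify that under the stated inequalities the total is $o(N^{1/\alpha_1+1/\alpha_2-1})$; (5) conclude $\cR_{\alpha_1,\alpha_2}(N)\ge$ main term $-$ error $\gg N^{1/\alpha_1+1/\alpha_2-1}$, checking the implied constant is absolute (it depends only on the van der Corput constants and the fixed smoothing, not on $\alpha_1,\alpha_2$).

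I expect the main obstacle to be step (4): getting the exponent arithmetic to close. One has to track how the van der Corput bound for $S(h,n_1)$ degrades as $\alpha_2\to3/2$ (or as one is forced into a higher-derivative test), and the asymmetry between the two exponents means the error is controlled by $\alpha_{\max}$ while the main term's size involves both — so the condition $6/\alpha_{\max}+1/\alpha_{\min}>13/3$ is not something one guesses in advance but something that falls out of the optimization, and making sure no logarithmic or lower-order losses spoil the strict inequality (hence the $o(\cdot)$ rather than $\ll$) requires care. A secondary technical point is handling the boundary effects of the localization $n_i\asymp N^{1/\alpha_i}$ and the passage from $\lfloor n_i^{\alpha_i}\rfloor$ to $n_i^{\alpha_i}$ uniformly in the relevant ranges, but this is routine once the smoothing is in place.
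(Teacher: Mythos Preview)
Your setup misidentifies the constraint. The equation $\lfloor n_1^{\alpha_1}\rfloor+\lfloor n_2^{\alpha_2}\rfloor=N$ is \emph{not} equivalent to $n_1^{\alpha_1}+n_2^{\alpha_2}$ lying in a fixed interval of length~$1$: the two separate floors allow $n_1^{\alpha_1}+n_2^{\alpha_2}\in[N,N+2)$, with an additional carry condition on $\{n_1^{\alpha_1}\}+\{n_2^{\alpha_2}\}$. What you have written is the Churchhouse--Rieger problem $N=\lfloor n_1^{\alpha_1}+n_2^{\alpha_2}\rfloor$, which is genuinely easier. More seriously, for fixed $n_1$ there is at most one $n_2$ satisfying the constraint, so your ``inner sum $\sum_{n_2}e(hn_2^{\alpha_2}+\cdots)$'' never has more than one term; treating it as a long exponential sum and applying van der Corput to it makes no sense. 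The problem is intrinsically one-dimensional, and the paper's proof reflects this.

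What the paper actually does (following Deshouillers) is first prove a pointwise ``fundamental lemma'': if $\{n_1^{\alpha_1}\}<1/2$ and $1-\{(N+\tfrac12-n_1^{\alpha_1})^{1/\alpha_2}\}\le(2\alpha_2)^{-1}(N+\tfrac12)^{1/\alpha_2-1}$, then a valid $n_2$ exists. One then counts $n_1$ in an interval $\asymp N^{1/\alpha_1}$ satisfying these two fractional-part conditions simultaneously, via the two-dimensional Koksma inequality (Lemma~\ref{Koksma}). The exponential sums that arise are $\sum_{n}e\bigl(h_1n^{\alpha_1}+h_2(X-n^{\alpha_1})^{1/\alpha_2}\bigr)$ with \emph{two} frequencies and a \emph{composed} phase. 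The hard case is $h_1h_2>0$ with $|h_2|\asymp X^{1-1/\alpha_2}$: here $h_1f_1''$ and $h_2f_2''$ have opposite signs and can nearly cancel, so the second-derivative test is useless on part of the range. One must isolate the subinterval where $|f''|$ is small, show $f'''$ does not vanish there (this uses $\alpha_{\max}<3/2$), and apply the Sargos--Gritsenko third-derivative bound. The condition $6/\alpha_{\max}+1/\alpha_{\min}>13/3$ is exactly what makes the third-derivative contribution on that subinterval $o(N^{1/\alpha_1+1/\alpha_2-1})$; it does not arise from any optimization of a $k$th-derivative test on a simple phase $hn_2^{\alpha_2}$.
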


Theorem~\ref{mainDes2} can be proved in a similar way to Deshouillers' proof, 
but we should note the additional assumptions $\alpha_1,\alpha_2<3/2$ and $6/\alpha_{\max}+1/\alpha_{\min}>13/3$.
One can ignore these assumptions when $\alpha_1=\alpha_2=\alpha$ (because the inequality $2/\alpha>3/2$ implies that $\alpha<3/2$ and $7/\alpha>13/3$).
We prove Theorem~\ref{mainDes2} in Appendix.

The following corollary follows from Theorems~\ref{main2} and \ref{mainDes2} immediately.

\begin{corollary}\label{main2'}
	Let $\alpha\in(1,4/3)$.
	Then 
	\[
	\cR_\alpha(N) \asymp N^{2/\alpha-1} \quad (N\to\infty),
	\]
	where the implicit constant is absolute.
\end{corollary}

By considering the sum $\sum_{1\le n<x} \cR_\alpha(\lfloor{n^\alpha}\rfloor)$, 
we obtain the following corollary.

\begin{corollary}\label{main2''}
	Let $\alpha\in(1,4/3)$. Then 
	\[
	\#\{ (l,m,n)\in\mathbb{N}^3 : n<x,\ \lfloor{l^\alpha}\rfloor+\lfloor{m^\alpha}\rfloor=\lfloor{n^\alpha}\rfloor \}
	\asymp x^{3-\alpha} \quad (x\to\infty),
	\]
	where the implicit constant is absolute.
\end{corollary}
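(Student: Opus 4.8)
The plan is to deduce Corollary~\ref{main2''} from Corollary~\ref{main2'} by summing $\cR_\alpha(\lfloor n^\alpha\rfloor)$ over $n<x$, exactly as indicated in the text. First observe the identity
\[
\#\{ (l,m,n)\in\mathbb{N}^3 : n<x,\ \lfloor{l^\alpha}\rfloor+\lfloor{m^\alpha}\rfloor=\lfloor{n^\alpha}\rfloor \}
= \sum_{1\le n<x} \cR_\alpha(\lfloor{n^\alpha}\rfloor),
\]
which holds because the inner condition $\lfloor l^\alpha\rfloor+\lfloor m^\alpha\rfloor=\lfloor n^\alpha\rfloor$ is precisely the defining condition counted by $\cR_\alpha(\lfloor n^\alpha\rfloor)$, and the triple $(l,m,n)$ is recovered by first choosing $n$ and then the pair $(l,m)$. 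So it suffices to show $\sum_{1\le n<x}\cR_\alpha(\lfloor n^\alpha\rfloor)\asymp x^{3-\alpha}$.

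For the upper bound, apply Corollary~\ref{main2'}: for $\alpha\in(1,4/3)$ we have $\cR_\alpha(N)\ll N^{2/\alpha-1}$ for all $N$ (note $2/\alpha-1>0$ here, so the exponent is positive and there is no issue at small $N$; one can absorb finitely many initial terms into the implicit constant). Since $\lfloor n^\alpha\rfloor\le n^\alpha$, each term is $\ll n^{\alpha(2/\alpha-1)}=n^{2-\alpha}$, and hence
\[
\sum_{1\le n<x}\cR_\alpha(\lfloor{n^\alpha}\rfloor) \ll \sum_{1\le n<x} n^{2-\alpha} \ll x^{3-\alpha},
\]
using $2-\alpha>-1$. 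For the lower bound, we only need the terms with, say, $x/2\le n<x$: for each such $n$ we have $\lfloor n^\alpha\rfloor\gg x^\alpha$, so Corollary~\ref{main2'} gives $\cR_\alpha(\lfloor n^\alpha\rfloor)\gg (x^\alpha)^{2/\alpha-1}=x^{2-\alpha}$, and summing over the $\asymp x$ values of $n$ in that range yields $\gg x\cdot x^{2-\alpha}=x^{3-\alpha}$. Combining the two bounds gives $\asymp x^{3-\alpha}$.

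There is essentially no obstacle here: the argument is a one-line dyadic summation once Corollary~\ref{main2'} is in hand. The only minor point to be careful about is that the implicit constants in Corollary~\ref{main2'} are asymptotic (valid as $N\to\infty$) rather than uniform over all $N\ge1$; but since the relevant $N=\lfloor n^\alpha\rfloor$ grows with $n$, in the lower-bound range $x/2\le n<x$ the value $N$ is large once $x$ is large, and in the upper bound the finitely many $n$ with $\lfloor n^\alpha\rfloor$ below the threshold contribute $O(1)$, which is absorbed. Hence the same proof that yields Corollary~\ref{main2''} from Corollary~\ref{main2'} also yields Corollary~\ref{main1''} from Corollary~\ref{main1'} by replacing each $\ll$/$\gg$ with the precise asymptotic constant and evaluating $\sum_{1\le n<x} n^{2-\alpha}\sim x^{3-\alpha}/(3-\alpha)$, which explains the factor $(3-\alpha)^{-1}$ appearing in \eqref{eq03}.
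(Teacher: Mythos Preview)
Your proof is correct and follows exactly the approach the paper indicates: express the count as $\sum_{1\le n<x}\cR_\alpha(\lfloor n^\alpha\rfloor)$ and apply Corollary~\ref{main2'} termwise. The only refinement worth noting is that the upper bound in Theorem~\ref{main2} actually holds for every $N\ge1$ (not merely asymptotically), so the caveat about finitely many initial terms is unnecessary for the upper bound; your handling of the lower bound via the dyadic range $x/2\le n<x$ is exactly what is needed since Theorem~\ref{mainDes2} is stated asymptotically.
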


Moreover, we can estimate the number of (non-trivial) three-term arithmetic progressions (for short, $3$-APs) in $\PS(\alpha)$ as follows.

\begin{corollary}\label{main2'''}
	Let $\alpha\in(1,4/3)$. Then 
	\begin{equation}
		\#\{ (l,m,n)\in\mathbb{N}^3 : l<m<n<x,\ \lfloor{l^\alpha}\rfloor+\lfloor{n^\alpha}\rfloor=2\lfloor{m^\alpha}\rfloor \}
		\asymp x^{3-\alpha} \quad (x\to\infty),
		\label{eq01}
	\end{equation}
	where the implicit constant is absolute.
\end{corollary}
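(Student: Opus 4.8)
The plan is to deduce Corollary~\ref{main2'''} from Corollary~\ref{main2''} by relating the count of $3$-APs in $\PS(\alpha)$ to the count of solutions of $\lfloor l^\alpha\rfloor + \lfloor n^\alpha\rfloor = \lfloor m^\alpha\rfloor$. Since both bounds in \eqref{eq01} are claimed, I would treat the upper and lower bounds separately.

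For the \textbf{upper bound} in \eqref{eq01}, I would argue that a $3$-AP $(\lfloor l^\alpha\rfloor, \lfloor m^\alpha\rfloor, \lfloor n^\alpha\rfloor)$ with $l<m<n$ is controlled by the counting function of Corollary~\ref{main2''} after a dyadic decomposition. The key observation is that $\lfloor l^\alpha\rfloor + \lfloor n^\alpha\rfloor = 2\lfloor m^\alpha\rfloor$ forces $\lfloor n^\alpha\rfloor \le 2\lfloor m^\alpha\rfloor$, hence $n \ll m$, so all three variables are of comparable size (up to a constant). Writing $M = \lfloor m^\alpha\rfloor$, the value $2M$ plays the role of ``$N$'' in the representation problem, but here we are counting representations of the specific integer $2\lfloor m^\alpha\rfloor \in 2\PS(\alpha)$ as $\lfloor l^\alpha\rfloor + \lfloor n^\alpha\rfloor$. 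One clean route: set $u = n$, and note that $\lfloor l^\alpha\rfloor = 2\lfloor m^\alpha\rfloor - \lfloor n^\alpha\rfloor$; for each pair $(m,n)$ with $m,n < x$ there is at most one $l$, so the count is at most $\#\{(m,n) : m,n<x,\ 2\lfloor m^\alpha\rfloor - \lfloor n^\alpha\rfloor \in \PS(\alpha),\ \text{and } {} < 2\lfloor m^\alpha\rfloor\}$. Summing the trivial bound $\cR_\alpha(\lfloor l^\alpha\rfloor) \ll \lfloor l^\alpha\rfloor^{2/\alpha - 1}$ from Corollary~\ref{main2'} over $l < x$ already gives $\ll x^{3-\alpha}$, and the substitution above shows the $3$-AP count is bounded by a quantity of this exact shape; so the upper bound follows directly from Corollary~\ref{main2'} (or equivalently from Corollary~\ref{main2''} via dyadic summation over the size of $m$).

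For the \textbf{lower bound}, I would produce $\gg x^{3-\alpha}$ genuine $3$-APs. The natural idea is to use Corollary~\ref{main2''}: it gives $\gg x^{3-\alpha}$ triples $(l',m',n')$ with $n'<x$ and $\lfloor l'^\alpha\rfloor + \lfloor m'^\alpha\rfloor = \lfloor n'^\alpha\rfloor$. From each such triple I would build an arithmetic progression; for instance, taking the middle term $\lfloor n'^\alpha\rfloor$ and outer terms shifted appropriately, or more simply, running the argument of Theorem~\ref{mainDes2}/Deshouillers with the target equation $\lfloor l^\alpha\rfloor + \lfloor n^\alpha\rfloor = 2\lfloor m^\alpha\rfloor$ in place of $= N$. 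In Deshouillers' construction one fixes the two ``free'' variables in dyadic boxes and uses equidistribution of $\{n^\alpha\}$ to detect when the floor equation holds; replacing the constant $N$ by the slowly-varying quantity $2\lfloor m^\alpha\rfloor$ (as $m$ ranges over a dyadic interval) is harmless because the exponential-sum estimates used are uniform in the target. One must also discard the degenerate cases $l = m$ or $m = n$, which contribute only $O(x)$, negligible against $x^{3-\alpha}$ since $\alpha < 2$.

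The \textbf{main obstacle} is the lower bound, specifically adapting Deshouillers' method to a moving target $2\lfloor m^\alpha\rfloor$ rather than a fixed $N$, and verifying that the constraint $l < m < n$ (strict ordering) can be arranged for a positive proportion of the constructed solutions rather than being destroyed by it. I expect this is handled by choosing the dyadic ranges of $l$ and $n$ to straddle $m$ from below and above respectively — e.g. $l \asymp m/2$ and $n \asymp (3/2)^{1/\alpha} m$ so that $\lfloor l^\alpha\rfloor$ is noticeably smaller and $\lfloor n^\alpha\rfloor$ noticeably larger than $\lfloor m^\alpha\rfloor$, forcing $l < m < n$ automatically — and then the counting goes through exactly as in the proof of Theorem~\ref{mainDes2}, whose hypotheses $\alpha \in (1,4/3)$ guarantee the relevant $7/\alpha > 13/3$ and $2/\alpha > 3/2$ inequalities. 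The remaining bookkeeping (summing the per-$m$ lower bound $\gg (\text{box size})^{2/\alpha - 1}$ over a dyadic interval of $m$'s and then over dyadic scales up to $x$) is routine and yields the claimed $\gg x^{3-\alpha}$.
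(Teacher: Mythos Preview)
Your upper-bound sketch is headed in the right direction but muddled; the clean version is exactly what the paper does: for each $m<x$ the number of admissible $(l,n)$ is at most $\cR_\alpha(2\lfloor m^\alpha\rfloor)\ll m^{2-\alpha}$ by Corollary~\ref{main2'}, and summing over $m<x$ gives $\ll x^{3-\alpha}$ directly. There is no need for dyadic decomposition or for routing through Corollary~\ref{main2''}.

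For the lower bound you are working much too hard. You propose re-running Deshouillers' construction with the ``moving target'' $2\lfloor m^\alpha\rfloor$ and carefully arranging dyadic boxes so that $l<m<n$ is forced; this could in principle be made to work, but it is unnecessary. The paper's argument bypasses all of this with a single observation: the equation $\lfloor l^\alpha\rfloor+\lfloor n^\alpha\rfloor=2\lfloor m^\alpha\rfloor$ is symmetric in $l$ and $n$, and since $n\mapsto\lfloor n^\alpha\rfloor$ is strictly increasing, the case $l=n$ forces $l=m=n$ while the case $l<n$ automatically gives $l<m<n$. Hence the ordered count with $l<m<n$ and $m<2^{-1/\alpha}x$ equals exactly half of $\sum_{m<2^{-1/\alpha}x}\cR_\alpha(2\lfloor m^\alpha\rfloor)$ minus $O(x)$ for the diagonal. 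Corollary~\ref{main2'} already gives $\cR_\alpha(2\lfloor m^\alpha\rfloor)\gg m^{2-\alpha}$, so the sum is $\gg x^{3-\alpha}$; and the restriction $m<2^{-1/\alpha}x$ guarantees $n<x$, completing the sandwich.

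So the ``main obstacle'' you identify does not actually exist: one never needs to reopen the exponential-sum machinery, because Corollary~\ref{main2'} applies verbatim to $N=2\lfloor m^\alpha\rfloor$ and the ordering constraint comes for free from symmetry.
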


Actually, the following statement related to Corollary~\ref{main2'''} is known \cite[Theorem~1.3]{SY2}: 
for every $\alpha\in(1,2)$ and every integer $k\ge3$, 
the number of $k$-term arithmetic progressions $P\subset\mathbb{N}\cap(0,x)$ such that 
$(\lfloor{n^\alpha}\rfloor)_{n\in P}$ is also a $k$-term arithmetic progression, 
is $\asymp_{\alpha,k} x^{2-\alpha/2}$ as $x\to\infty$.
Since $3-\alpha>2-\alpha/2$ for every $\alpha\in(1,2)$, 
the number of such $3$-APs is far less than the number of the other $3$-APs in $\PS(\alpha)$
when $\alpha\in(1,4/3)$.

A heuristic argument supports that the non-asymptotic formula in Corollary~\ref{main2'''} is still true even if $\alpha\in(1,2)\cup(2,3)$.
However, the case $\alpha=2$ does not hold because 
Hulse et al.\ \cite[Theorem~8.1]{Hulse} showed the asymptotic formula for the number of primitive $3$-APs in squares, 
which yields that 
\begin{equation}
\begin{split}
	&\quad \#\{ (l,m,n)\in\mathbb{N}^3 : l<m<n<x,\ l^2+n^2=2m^2 \}\\
	&= \frac{\sqrt{2}\log(1+\sqrt{2})}{\pi^2}x\log x + O(x) \quad (x\to\infty).
\end{split}\label{eqNAP}
\end{equation}
Hence, the case $\alpha=2$ is a kind of singularity as well as the case $\alpha=2$ of \eqref{eq03}.
We can observe this situation by numerical computation (see Section~\ref{heuristic}).

\begin{proof}[Proof of Corollary~$\ref{main2'''}$ assuming Corollary~$\ref{main2'}$]
	Corollary~\ref{main2'} implies that 
	\begin{equation}
		\#\{ (l,m,n)\in\mathbb{N}^3 : m<x,\ \lfloor{l^\alpha}\rfloor+\lfloor{n^\alpha}\rfloor=2\lfloor{m^\alpha}\rfloor \}
		\asymp x^{3-\alpha} \quad (x\to\infty)
		\label{eq02}
	\end{equation}
	by considering the sum $\sum_{1\le m<x} \cR_\alpha(2\lfloor{m^\alpha}\rfloor)$.
	Also, we have the following inequalities: 
	\begin{align*}
		&\quad \#\{ (l,m,n)\in\mathbb{N}^3 : l<m<n,\ m<2^{-1/\alpha}x,\ \lfloor{l^\alpha}\rfloor+\lfloor{n^\alpha}\rfloor=2\lfloor{m^\alpha}\rfloor \}\\
		&\le \#\{ (l,m,n)\in\mathbb{N}^3 : l<m<n<x,\ \lfloor{l^\alpha}\rfloor+\lfloor{n^\alpha}\rfloor=2\lfloor{m^\alpha}\rfloor \}\\
		&\le \#\{ (l,m,n)\in\mathbb{N}^3 : m<x,\ \lfloor{l^\alpha}\rfloor+\lfloor{n^\alpha}\rfloor=2\lfloor{m^\alpha}\rfloor \}
	\end{align*}
	and 
	\begin{align*}
		&\quad \#\{ (l,m,n)\in\mathbb{N}^3 : l<m<n,\ m<2^{-1/\alpha}x,\ \lfloor{l^\alpha}\rfloor+\lfloor{n^\alpha}\rfloor=2\lfloor{m^\alpha}\rfloor \}\\
		&= \frac{1}{2}\bigl( \#\{ (l,m,n)\in\mathbb{N}^3 : m<2^{-1/\alpha}x,\ \lfloor{l^\alpha}\rfloor+\lfloor{n^\alpha}\rfloor=2\lfloor{m^\alpha}\rfloor \}\\
		&\qquad\qquad- \#\{ (l,m,n)\in\mathbb{N}^3 : l=m=n<2^{-1/\alpha}x,\ \lfloor{l^\alpha}\rfloor+\lfloor{n^\alpha}\rfloor=2\lfloor{m^\alpha}\rfloor \} \bigr)\\
		&\ge \frac{1}{2}\#\{ (l,m,n)\in\mathbb{N}^3 : m<2^{-1/\alpha}x,\ \lfloor{l^\alpha}\rfloor+\lfloor{n^\alpha}\rfloor=2\lfloor{m^\alpha}\rfloor \}\\
		&\quad- 2^{-1/\alpha}x.
	\end{align*}
	By these and \eqref{eq02}, we obtain \eqref{eq01}.
\end{proof}

\subsection{Related work}

First, we mention two existing studies that address the counting problem for the equation 
\begin{equation}
	N = \lfloor{n_1^{\alpha_1}+n_2^{\alpha_2}}\rfloor
	\label{eqWaring3'}
\end{equation}
instead of \eqref{eqWaring3}.
Churchhouse \cite{Churchhouse} proved that, for every $\alpha_1=\alpha_2=\alpha\in(1,4/3)$, 
the number of pairs $(n_1,n_2)$ of positive integers with \eqref{eqWaring3'} is $\gg_\alpha N^{2/\alpha-1}$ as $N\to\infty$.
This is probably the first attack intending to solve the counting problem for \eqref{eqWaring3} (since \eqref{eqWaring3} was mentioned in \cite{Churchhouse}).
Also, Rieger \cite{Rieger} proved that, for every $\alpha_1,\alpha_2\in(1,2)$ with $1/\alpha_1+1/\alpha_2>3/2$, 
the number of pairs $(n_1,n_2)$ of positive integers with \eqref{eqWaring3'} is $\gg_{\alpha_1,\alpha_2} N^{1/\alpha_1+1/\alpha_2-1}$ as $N\to\infty$.
In this statement, one does not need to assume $6/\alpha_{\max}+1/\alpha_{\min}>13/3$, which appears in Theorem~\ref{mainDes2}.

Next, we state some existing studies on binary additive problems involving Piatetski-Shapiro numbers.
For the case of more than two variables, see other references, e.g., \cite{Akbal1,Akbal2,Dimitrov}.
As already stated, the case $s=2$ of \eqref{eqWaring} were studied in \cite{Deshouillers2,Gritsenko,Konyagin}.
Many other existing studies investigate the case when at least one variable $n_1$ is a prime number \cite{Kumchev,Yu, Petrov,Wu}.
Kumchev \cite{Kumchev} proved that, for every $\alpha\in(1,16/15)$ and every sufficiently large integer $N\ge1$, 
there exist a prime $p$ and an integer $m\ge1$ such that 
\begin{equation}
	N = \lfloor{p^\alpha}\rfloor+\lfloor{m^\alpha}\rfloor.
	\label{eq04}
\end{equation}
Yu \cite{Yu} improved the above range $(1,16/15)$ to $(1,11/10)$.
Petrov and Tolev \cite{Petrov} proved that, for every $\alpha\in(1,29/28)$ and every sufficiently large integer $N\ge1$, 
there exist a prime $p$ and an almost prime $m\ge1$ with at most $\lfloor{52/(29-28\alpha)}\rfloor+1$ prime factors such that \eqref{eq04} holds.
Recently, Wu \cite{Wu} improved Petrov and Tolev's result by replacing $(1,29/28)$ and $\lfloor{52/(29-28\alpha)}\rfloor+1$ with 
$(1,247/238)$ and $\lfloor{450/(247-238\alpha)}\rfloor+1$, respectively.

Some existing studies address a similar problem with almost all $N$ \cite{Balanzario,Yu, Laporta,Zhu}, 
where ``almost all'' means for the set of exceptions to have density zero.
Let $x\ge2$.
Balanzario et al.\ \cite{Balanzario} proved that, for every $\alpha\in(1,17/11)$ and almost all integers $N\ge1$, 
there exist a prime $p$ and an integer $m\ge1$ such that \eqref{eq04} holds, 
where the number of exceptional integers $N\in[1,x]$ is $O(x^{1-\delta})$ for some $\delta=\delta(\alpha)>0$.
Yu \cite{Yu} improved the above range $(1,17/11)$ to $\bigl( 1,(1+\sqrt{5})/2 \bigr)$.
Laporta \cite{Laporta} proved that, for every $\alpha\in(1,17/16)$ and almost all integers $N\in(x/2,x]$, 
there exists a pair $(p_1,p_2)$ of primes such that 
\begin{equation}
	N = \lfloor{p_1^\alpha}\rfloor+\lfloor{p_2^\alpha}\rfloor,
	\label{eq05}
\end{equation}
where the number of exceptional integers $N\in(x/2,x]$ is $O\bigl( x\exp(-(\log x)^{1/6}) \bigr)$.
Zhu \cite{Zhu} improved the above range $(1,17/16)$ to $(1,14/11)$.
Moreover, Laporta \cite{Laporta} and Zhu \cite{Zhu} also showed the asymptotic formula for the weighted count of such pairs $(p_1,p_2)$: 
\[
\sum_{\substack{p_1,p_2\text{ primes} \\ \text{with \eqref{eq05}}}} (\log p_1)(\log p_2)
= \frac{\Gamma(1+1/\alpha)^2}{\Gamma(2/\alpha)}N^{2/\alpha-1} + O\bigl( x^{2/\alpha-1}\exp(-(\log x)^{1/6}) \bigr)
\]
for every $\alpha$ in their ranges and almost all integers $N\in(x/2,x]$. 

Finally, we remark an existing study related to Corollary~\ref{main1'}.
Using differences of two Piatetski-Shapiro numbers instead of sums of them, 
the author \cite{Yoshida} proved the following asymptotic formula: 
for every $\alpha\in\bigl( 1,(\sqrt{21}+4)/5 \bigr)$, 
the number of pairs $(m,n)$ of positive integers with $d=\lfloor{n^\alpha}\rfloor - \lfloor{m^\alpha}\rfloor$ 
is equal to 
\[
\beta\alpha^{-\beta}\zeta(\beta)d^{\beta-1} + o(d^{\beta-1}) \quad (d\to\infty),
\]
where $\beta=1/(\alpha-1)$, and $\zeta$ denotes the Riemann zeta function.
Since $(\sqrt{21}+4)/5$ is about $1.717$, 
the range $\bigl( 1,(\sqrt{21}+4)/5 \bigr)$ is much larger than that of Corollary~\ref{main1'}.

\section{Preliminary lemmas}\label{pre-lem}

From now on, denote by $\{x\}=x-\lfloor{x}\rfloor$ the fractional part of a real number $x$, and 
by $\|\cdot\|$ the distance to the nearest integer.
We use the notations ``$o(\cdot)$, $O(\cdot)$, $\sim$, $\asymp$, $\ll$'' in the usual sense.
If implicit constants depend on parameters $a_1,\ldots,a_n$, 
we often write ``$O_{a_1,\ldots,a_n}(\cdot)$, $\asymp_{a_1,\ldots,a_n}$, $\ll_{a_1,\ldots,a_n}$'' 
instead of ``$O(\cdot)$, $\asymp$, $\ll$''.
Also, denote by $e(x)$ the function $e^{2\pi ix}$, and 
by $|\cI|$ the length of an interval $\cI$ of $\mathbb{R}$.

We begin with the following lemma which is useful to solve counting problems involving Piatetski-Shapiro numbers.

\begin{lemma}[Koksma \cite{Koksma}; cf.\ {\cite[Proposition~2]{Deshouillers2}}]\label{Koksma}
	Let $\cI$ be an interval of $\mathbb{R}$.
	For $i=1,2$, let $f_i$ be a real-valued function defined on $\cI$, and $a_i,b_i$ be real numbers with $0\le a_i<b_i<1$.
	Then, for all $H_1,H_2\ge4$, the value 
	\begin{align*}
		R &= \#\bigl\{ n\in\cI\cap\mathbb{Z} : a_1\le\{f_1(n)\}<b_1,\ a_2\le\{f_2(n)\}<b_2 \bigr\}\\
		&\quad- \#(\cI\cap\mathbb{Z})(b_1-a_1)(b_2-a_2)
	\end{align*}
	satisfies the inequality 
	\begin{align*}
		R &\ll \sum_{\substack{1\le\abs{h_1}\le H_1 \\ 1\le\abs{h_2}\le H_2}}
		\Bigl( \prod_{i=1,2} \min\{ \|b_i-a_i\|+H_i^{-1}, \abs{h_i}^{-1} \} \Bigr)\abs{\sum_{n\in\cI\cap\mathbb{Z}} e\bigl( h_1f_1(n)+h_2f_2(n) \bigr)}\\
		&\quad+ (b_2-a_2+H_2^{-1})\sum_{1\le h\le H_1} \min\{ \|b_1-a_1\|+H_1^{-1}, h^{-1} \}\abs{\sum_{n\in\cI\cap\mathbb{Z}} e\bigl( hf_1(n) \bigr)}\\
		&\quad+ (b_1-a_1+H_1^{-1})\sum_{1\le h\le H_2} \min\{ \|b_2-a_2\|+H_2^{-1}, h^{-1} \}\abs{\sum_{n\in\cI\cap\mathbb{Z}} e\bigl( hf_2(n) \bigr)}\\
		&\quad+ \#(\cI\cap\mathbb{Z})\Bigl( \frac{b_1-a_1}{H_2} + \frac{b_2-a_2}{H_1} + \frac{1}{H_1H_2} \Bigr).
	\end{align*}
\end{lemma}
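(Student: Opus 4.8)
The plan is to reduce the two-dimensional discrepancy-type quantity to exponential sums via a Fourier-analytic approximation of the indicator function of a box in $(\mathbb{R}/\mathbb{Z})^2$. First I would recall the classical one-dimensional Vaaler (or Erdős--Turán) construction: for each pair $0\le a_i<b_i<1$ and each $H_i\ge4$, there exist trigonometric polynomials $\chi_i^{\pm}$ of degree at most $H_i$ such that $\chi_i^-(x)\le \mathbf{1}_{[a_i,b_i)}(\{x\})\le \chi_i^+(x)$ for all $x\in\mathbb{R}$, with $\widehat{\chi_i^{\pm}}(0)=(b_i-a_i)\pm O(H_i^{-1})$ and with Fourier coefficients satisfying $|\widehat{\chi_i^{\pm}}(h)|\ll \min\{\|b_i-a_i\|+H_i^{-1},|h|^{-1}\}$ for $1\le|h|\le H_i$ and $\widehat{\chi_i^\pm}(h)=0$ for $|h|>H_i$. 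The key analytic input here is the standard bound on the Fourier coefficients of the Beurling--Selberg/Vaaler majorant and minorant of an interval; the refinement that the $h$-th coefficient is also $\ll \|b_i-a_i\|+H_i^{-1}$ (not merely $\ll|h|^{-1}$) comes from the fact that for a short interval the function $\mathbf{1}_{[a_i,b_i)}$ itself has small coefficients, and the approximants inherit this.

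Next I would form the two-dimensional sandwich $\chi_1^-(f_1(n))\chi_2^-(f_2(n)) \le \mathbf{1}_{[a_1,b_1)}(\{f_1(n)\})\,\mathbf{1}_{[a_2,b_2)}(\{f_2(n)\}) \le \chi_1^+(f_1(n))\chi_2^+(f_2(n))$, sum over $n\in\cI\cap\mathbb{Z}$, and expand each product into its (finite) Fourier series. The main term is $\widehat{\chi_1^\pm}(0)\widehat{\chi_2^\pm}(0)\,\#(\cI\cap\mathbb{Z})$, which equals $\#(\cI\cap\mathbb{Z})(b_1-a_1)(b_2-a_2)$ plus the error $\#(\cI\cap\mathbb{Z})\bigl(\frac{b_1-a_1}{H_2}+\frac{b_2-a_2}{H_1}+\frac{1}{H_1H_2}\bigr)$ coming from the $O(H_i^{-1})$ corrections to the zeroth coefficients. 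The remaining terms split into three groups according to which of $h_1,h_2$ is nonzero: the group with both $h_1,h_2\neq 0$ gives, after bounding the coefficients, the first double sum over $1\le|h_1|\le H_1$, $1\le|h_2|\le H_2$ of $\prod_i\min\{\|b_i-a_i\|+H_i^{-1},|h_i|^{-1}\}$ times $|\sum_n e(h_1f_1(n)+h_2f_2(n))|$; the group with $h_2=0$, $h_1\neq0$ gives $\widehat{\chi_2^\pm}(0)$ (which is $\ll b_2-a_2+H_2^{-1}$) times a one-dimensional sum in $h_1$, and by symmetry of $h\leftrightarrow -h$ one restricts to $1\le h\le H_1$ at the cost of an absolute constant, yielding the second line; the group with $h_1=0$, $h_2\neq0$ gives the third line symmetrically.

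The one genuinely delicate point — and what I expect to be the main obstacle — is handling the difference between the majorant count and the minorant count correctly so that $R$ is squeezed from both sides by the \emph{same} bound. Since $\mathbf{1}_{[a_1,b_1)}\mathbf{1}_{[a_2,b_2)}$ lies between $\chi_1^-\chi_2^-$ and $\chi_1^+\chi_2^+$, one has $R \le \sum_n \chi_1^+(f_1(n))\chi_2^+(f_2(n)) - \#(\cI\cap\mathbb{Z})(b_1-a_1)(b_2-a_2)$ and $R \ge \sum_n \chi_1^-(f_1(n))\chi_2^-(f_2(n)) - \#(\cI\cap\mathbb{Z})(b_1-a_1)(b_2-a_2)$; one must check that \emph{both} the upper and lower expressions are, in absolute value, bounded by the stated right-hand side. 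For the nonzero-frequency contributions this is immediate since the coefficient bounds hold for $\chi_i^+$ and $\chi_i^-$ alike; the only care needed is in the main-term comparison, where the cross terms such as $(b_1-a_1)\cdot O(H_2^{-1})$ and $O(H_1^{-1})\cdot O(H_2^{-1})$ must be absorbed into $\#(\cI\cap\mathbb{Z})\bigl(\frac{b_1-a_1}{H_2}+\frac{b_2-a_2}{H_1}+\frac{1}{H_1H_2}\bigr)$, which they are by construction. A minor technical nuisance is that $\chi_i^\pm$ are functions on $\mathbb{R}$ periodic of period $1$, so $\chi_i^\pm(f_i(n))$ depends only on $\{f_i(n)\}$ and the Fourier expansion $e(h f_i(n))$ is well-defined; no regularity of $f_i$ is used at all, only that the sums $\sum_n e(h_1 f_1(n)+h_2 f_2(n))$ make sense, which they do as finite sums over $\cI\cap\mathbb{Z}$. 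Assembling the three frequency groups and the main-term error completes the proof.
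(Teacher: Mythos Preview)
The paper does not prove this lemma; it is quoted from Koksma's monograph \cite{Koksma} (with a pointer to Deshouillers' restatement \cite[Proposition~2]{Deshouillers2}), so there is no in-paper argument to compare against. Your overall Fourier-analytic strategy via Beurling--Selberg/Vaaler approximants is indeed the standard route to Erd\H{o}s--Tur\'an--Koksma inequalities of this shape, and the decomposition into the four displayed blocks (both frequencies nonzero; $h_2=0$; $h_1=0$; zeroth-coefficient error) is exactly right.

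There is, however, a genuine gap in the two-dimensional sandwich. The upper inequality $\mathbf{1}_1\mathbf{1}_2\le\chi_1^+\chi_2^+$ is valid because each $\chi_i^+$ majorises a non-negative function and is therefore itself non-negative, so one may multiply the two inequalities $\mathbf{1}_i\le\chi_i^+$. But the claimed lower inequality $\chi_1^-\chi_2^-\le\mathbf{1}_1\mathbf{1}_2$ is \emph{false} in general: the minorants $\chi_i^-$ necessarily take strictly negative values outside $[a_i,b_i)$, and at any point $(x,y)$ where $\chi_1^-(x)<0$ and $\chi_2^-(y)<0$ one has $\chi_1^-(x)\chi_2^-(y)>0=\mathbf{1}_1(x)\mathbf{1}_2(y)$. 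This is the well-known obstruction to naively tensoring one-dimensional minorants.

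The standard repair is to avoid the minorant product altogether and instead control the lower tail through the non-negative envelope $\sigma_i\coloneqq\chi_i^+-\chi_i^-\ge0$. From $|\mathbf{1}_i-\chi_i^+|\le\sigma_i$ and the telescoping identity $\mathbf{1}_1\mathbf{1}_2-\chi_1^+\chi_2^+=(\mathbf{1}_1-\chi_1^+)\mathbf{1}_2+\chi_1^+(\mathbf{1}_2-\chi_2^+)$ one gets, using $\sigma_1\ge0$ and $\mathbf{1}_2\le\chi_2^+$,
\[
\mathbf{1}_1\mathbf{1}_2 \;\ge\; \chi_1^+\chi_2^+ - \sigma_1\chi_2^+ - \chi_1^+\sigma_2,
\]
and the right-hand side is now a genuine trigonometric polynomial in $(f_1,f_2)$ whose zeroth coefficient is $(b_1-a_1)(b_2-a_2)+O\bigl(\tfrac{b_1-a_1}{H_2}+\tfrac{b_2-a_2}{H_1}+\tfrac{1}{H_1H_2}\bigr)$ and whose nonzero coefficients obey the same $\min\{\|b_i-a_i\|+H_i^{-1},|h_i|^{-1}\}$ bounds. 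With this correction the rest of your outline goes through and yields precisely the four-line estimate in the lemma.
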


By Lemma~\ref{Koksma}, a counting problem reduces to estimating exponential sums.
The following lemmas are useful to estimate exponential sums.

\begin{lemma}[Kusmin--Landau; cf.\ {\cite[Theorem~2.1]{GK}}]\label{1stderiv}
	Let $\cI$ be an interval of $\mathbb{R}$, and 
	$f\colon \cI\to\mathbb{R}$ be a $C^1$ function such that $f'$ is monotone.
	If $\lambda_1>0$ satisfies that 
	\[
	\|f'(x)\| \ge \lambda_1
	\] 
	for all $x\in\cI$, then 
	\[
	\sum_{n\in\cI\cap\mathbb{Z}} e(f(n)) \ll \lambda_1^{-1}. 
	\]
\end{lemma}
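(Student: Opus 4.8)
The plan is to reduce the sum to a geometric-series comparison by exploiting the monotonicity of $f'$ together with the lower bound $\|f'(x)\|\ge\lambda_1$. First I would normalize: since adding an integer to $f$ on the whole interval does not change $e(f(n))$, and since $f'$ is monotone, I may assume (by replacing $f$ with $\pm f$ plus an integer-valued linear shift, which preserves monotonicity of the derivative) that $f'$ is nondecreasing and that $\{f'(x)\}$ lies in $[\lambda_1,1-\lambda_1]$ for all $x\in\cI$ — more precisely, that $f'(x)-\lfloor f'(n_0)\rfloor$ stays in an interval of the form $[\lambda_1, 1-\lambda_1]$ as $x$ ranges over $\cI$, where $n_0$ is the left endpoint of $\cI\cap\mathbb{Z}$. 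The key elementary inequality is that for a real number $\theta$ with $\lambda_1\le\{\theta\}\le 1-\lambda_1$ one has $|1-e(\theta)|=2|\sin\pi\theta|\ge 4\|\theta\|\ge 4\lambda_1$, so $|e(\theta)-1|$ is bounded below.

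Next I would apply summation by parts (Abel summation) to $\sum_{n} e(f(n))$. Write $e(f(n+1))-e(f(n)) = e(f(n))\bigl(e(f(n+1)-f(n))-1\bigr)$ and set $g(n) = e(f(n+1)-f(n))-1$. By the mean value theorem $f(n+1)-f(n)=f'(\xi_n)$ for some $\xi_n\in(n,n+1)$, and by monotonicity of $f'$ the quantities $f'(\xi_n)$ are themselves monotone in $n$, with fractional parts trapped in $[\lambda_1,1-\lambda_1]$ after the normalization above; hence $|g(n)|\ge 4\lambda_1$ and, crucially, $1/g(n)$ is of bounded variation. Writing $e(f(n)) = \bigl(e(f(n+1))-e(f(n))\bigr)/g(n)$ and summing by parts transfers the sum onto the telescoping partial sums of $e(f(n+1))-e(f(n))$ (which are bounded by $2$ in absolute value) against the total variation of $1/g(n)$. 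The total variation of $1/g(n)$ over the range telescopes — because $\arg g(n)$ and $|g(n)|$ are each monotone — to something like $|1/g(\text{last})|+|1/g(\text{first})|+\bigl(\text{variation terms}\bigr)$, each piece $\ll \lambda_1^{-1}$. Summing up gives $\sum_{n\in\cI\cap\mathbb{Z}} e(f(n)) \ll \lambda_1^{-1}$.

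I expect the main obstacle to be the bookkeeping in the summation-by-parts step: one must verify carefully that $1/g(n)=1/(e(f'(\xi_n))-1)$ genuinely has total variation $O(\lambda_1^{-1})$, which requires simultaneously controlling how $|e(f'(\xi_n))-1|$ moves (monotone, bounded below by $4\lambda_1$) and how its argument moves (monotone, through an arc of the circle staying away from $1$), and then combining these two monotonicities into a single variation bound. The normalization reducing the general monotone $f'$ to the case $\{f'\}\subset[\lambda_1,1-\lambda_1]$ also needs a short argument, since a priori $\|f'(x)\|\ge\lambda_1$ only says $\{f'(x)\}\in[\lambda_1,1-\lambda_1]$ pointwise, but monotonicity of $f'$ guarantees that over the whole interval $f'$ cannot wrap around the circle while avoiding the band $(1-\lambda_1,1)\cup(0,\lambda_1)$ — unless $|\cI|$ forces $f'$ to increase by at least $\lambda_1$, in which case the trivial bound $|\cI\cap\mathbb Z|$ is already $\ll\lambda_1^{-1}$ after splitting $\cI$ into subintervals on which $f'$ varies by at most $\lambda_1$ and handling each separately. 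Rather than reproduce all of this, I would cite \cite[Theorem~2.1]{GK} for the statement and give the proof only if needed; the ingredients above are exactly the classical Kusmin--Landau argument.
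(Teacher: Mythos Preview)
Your proposal is essentially the classical Kusmin--Landau argument, and it is correct in outline; but note that the paper itself does \emph{not} prove this lemma --- it simply states it with the citation ``cf.\ \cite[Theorem~2.1]{GK}'' and moves on. So your final suggestion, to cite \cite[Theorem~2.1]{GK} rather than reproduce the proof, is exactly what the paper does.

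One small point: your worry at the end about the normalization step is misplaced. Since $f'$ is continuous and monotone, and $\|f'(x)\|\ge\lambda_1$ means $f'(x)$ is never within $\lambda_1$ of an integer, the intermediate value theorem forces $f'(\cI)$ to lie entirely in a single interval $[k+\lambda_1,\,k+1-\lambda_1]$ for some integer $k$. Replacing $f(x)$ by $f(x)-kx$ (which leaves $e(f(n))$ unchanged) then gives $f'(x)\in[\lambda_1,1-\lambda_1]$ directly; there is no ``wrapping around'' to worry about and no need to split $\cI$ into subintervals.
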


\begin{lemma}[van der Corput; cf.\ {\cite[Theorem~2.2]{GK}}]\label{2ndderiv}
	Let $\cI$ be an interval of $\mathbb{R}$ with $|\cI|\ge1$, 
	$f\colon \cI\to\mathbb{R}$ be a $C^2$ function, and 
	$c\ge1$ be a real number.
	If $\Lambda_2\ge\lambda_2>0$ satisfies that 
	\[
	\lambda_2 \le |f''(x)| \le \Lambda_2
	\]
	for all $x\in\cI$, then 
	\[
	\sum_{n\in\cI\cap\mathbb{Z}} e(f(n))
	\ll \abs{\cI}\Lambda_2\lambda_2^{-1/2} + \lambda_2^{-1/2}.
	\]
	In particular, if $\lambda_2>0$ satisfies that 
	\[
	\lambda_2 \le |f''(x)| \le c\lambda_2
	\] 
	for all $x\in\cI$, then 
	\[
	\sum_{n\in\cI\cap\mathbb{Z}} e(f(n))
	\ll_c \abs{\cI}\lambda_2^{1/2} + \lambda_2^{-1/2}.
	\]
\end{lemma}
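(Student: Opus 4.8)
The plan is to deduce this second-derivative estimate from the Kusmin--Landau inequality (Lemma~\ref{1stderiv}) by dissecting $\cI$ according to the size of $\|f'\|$. First I would reduce to the case $f''>0$ on $\cI$: since $f''$ is continuous and $\abs{f''}\ge\lambda_2>0$, it has constant sign on the interval $\cI$, and replacing $f$ by $-f$ only conjugates the exponential sum, so we may assume $f'$ is strictly increasing. By the fundamental theorem of calculus $f'$ maps $\cI$ onto an interval of length $\int_\cI f''\le\Lambda_2\abs{\cI}$; hence, setting $\cI_m=\{\,x\in\cI:\abs{f'(x)-m}\le1/2\,\}$ for $m\in\mathbb{Z}$, at most $\Lambda_2\abs{\cI}+2$ of the sets $\cI_m$ are non-empty, and together they cover $\cI$ with overlaps only at the half-integer levels of $f'$, which is harmless for counting integer points.

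Next I would bound the contribution of a single non-empty piece $\cI_m$, fixing a parameter $0<\delta\le1/2$ to be chosen later. On $\cI_m$ the integer nearest to $f'(x)$ is $m$, so $\|f'(x)\|=\abs{f'(x)-m}$. Split $\cI_m$ into the resonant subinterval $\cB_m=\{\,x\in\cI_m:\abs{f'(x)-m}<\delta\,\}$ and the (at most two) complementary subintervals, on each of which $f'$ is monotone with $\|f'\|\ge\delta$. Lemma~\ref{1stderiv} bounds the sum over each complementary subinterval by $\ll\delta^{-1}$; and since $f''\ge\lambda_2$ forces $\abs{\cB_m}\le2\delta/\lambda_2$, the sum over $\cB_m\cap\mathbb{Z}$ is $\ll\abs{\cB_m}+1\ll\delta\lambda_2^{-1}+1$ by the triangle inequality. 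Thus $\cI_m$ contributes $\ll\delta^{-1}+\delta\lambda_2^{-1}+1$.

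Summing over the $\ll\Lambda_2\abs{\cI}+1$ non-empty pieces gives $\sum_{n\in\cI\cap\mathbb{Z}}e(f(n))\ll(\Lambda_2\abs{\cI}+1)(\delta^{-1}+\delta\lambda_2^{-1}+1)$. When $\lambda_2\le1/4$ I would take $\delta=\lambda_2^{1/2}\le1/2$, so that $\delta^{-1}=\delta\lambda_2^{-1}=\lambda_2^{-1/2}\ge1$ and the right-hand side collapses to $\ll\Lambda_2\abs{\cI}\lambda_2^{-1/2}+\lambda_2^{-1/2}$; when $\lambda_2>1/4$ the trivial bound already suffices, since $\sum_{n\in\cI\cap\mathbb{Z}}e(f(n))\ll\#(\cI\cap\mathbb{Z})\ll\abs{\cI}$ while $\Lambda_2\abs{\cI}\lambda_2^{-1/2}\ge\abs{\cI}\lambda_2^{1/2}\ge\abs{\cI}/2$ by $\Lambda_2\ge\lambda_2$ and $\abs{\cI}\ge1$. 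This gives the first assertion, and the ``in particular'' statement follows by inserting $\Lambda_2\le c\lambda_2$ into it, which turns $\Lambda_2\abs{\cI}\lambda_2^{-1/2}$ into $c\abs{\cI}\lambda_2^{1/2}$ and hence yields $\ll_c\abs{\cI}\lambda_2^{1/2}+\lambda_2^{-1/2}$.

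I expect no conceptual obstacle here: this is the classical van der Corput argument (cf.\ \cite[Theorem~2.2]{GK}), so the real work is bookkeeping, namely dissecting $\cI$ so that the Kusmin--Landau bound $\delta^{-1}$ balances the trivial bound $\delta\lambda_2^{-1}$ on the short resonant intervals, and separating off the degenerate range $\lambda_2\gtrsim1$, where the second-derivative test says nothing beyond the trivial estimate. The two hinges are the constant-sign property of $f''$ and the already-available Lemma~\ref{1stderiv}.
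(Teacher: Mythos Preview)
The paper does not supply its own proof of this lemma: it is stated with a citation to \cite[Theorem~2.2]{GK} and used as a black box. Your argument is the classical one (reduction to constant sign of $f''$, dissection of $\cI$ according to the integer nearest $f'$, then Kusmin--Landau off the short resonant intervals and the trivial bound on them), and it is correct as written; this is essentially how the result is proved in the cited reference, so there is nothing to compare.
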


\begin{lemma}[Sargos \cite{Sargos} and Gritsenko \cite{Grisenko}]\label{3rdderiv}
	Let $\cI$ be an interval of $\mathbb{R}$ with $|\cI|\ge1$, 
	$f\colon \cI\to\mathbb{R}$ be a $C^3$ function, and 
	$c\ge1$ be a real number.
	If $\lambda_3>0$ satisfies that 
	\[
	\lambda_3 \le |f'''(x)| \le c\lambda_3
	\]
	for all $x\in\cI$, then 
	\[
	\sum_{n\in\cI\cap\mathbb{Z}} e(f(n))
	\ll_c \abs{\cI}\lambda_3^{1/6} + \lambda_3^{-1/3}.
	\]
\end{lemma}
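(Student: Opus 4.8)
The plan is to reduce to the van der Corput second-derivative estimate (Lemma~\ref{2ndderiv}) by subdividing $\cI$ into subintervals on which $f''$ has constant sign and roughly constant size, exploiting that $f'''$ being bounded above and below forces $f''$ to be essentially monotone with controlled growth. First I would locate the (at most one) point $x_0\in\cI$ where $f''$ vanishes—it is unique because $f'''$ has constant sign on $\cI$ (as $\lambda_3\le|f'''|$ prevents sign changes), so $f''$ is strictly monotone. Splitting $\cI$ at $x_0$, I may assume $f''$ has constant sign on $\cI$, say $f''>0$, and that $|f''|$ is monotone increasing (the decreasing case is symmetric after reversing orientation).

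Next I would perform a dyadic decomposition according to the size of $|f''|$. Write $\cI=\bigcup_j \cI_j$ where on $\cI_j$ one has $|f''(x)|\asymp 2^j \mu$ for a suitable base scale $\mu$; because $|f''|$ is monotone, each $\cI_j$ is an interval, and the number of nonempty pieces is $O(\log(\text{range of }f''))$, which will be absorbed into the final estimate. On a piece $\cI_j$ with $|f''|\asymp\Delta_j$, the mean value theorem combined with $|f'''|\asymp\lambda_3$ gives $|\cI_j|\ll \Delta_j/\lambda_3$; this is the key geometric input linking the length of a level set of $|f''|$ to the size of $f'''$. Applying the second form of Lemma~\ref{2ndderiv} on each $\cI_j$ (after checking $|\cI_j|\ge1$; the short pieces contribute $O(1)$ trivially and there are only $O(\log)$ of them), I get
\[
\sum_{n\in\cI_j\cap\mathbb{Z}} e(f(n)) \ll |\cI_j|\Delta_j^{1/2} + \Delta_j^{-1/2}
\ll \frac{\Delta_j^{3/2}}{\lambda_3} + \Delta_j^{-1/2}.
\]
Summing over $j$, the first term is dominated by the largest piece, where $\Delta_j\ll |\cI|\lambda_3$ (again by the mean value theorem applied to $f''$ across all of $\cI$), giving a contribution $\ll |\cI|^{3/2}\lambda_3^{1/2}$; the second term is largest on the smallest nontrivial piece, where $\Delta_j\gg\lambda_3|\cI_j|\gg\lambda_3$, so it contributes $\ll \lambda_3^{-1/2}$. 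Balancing these against each other is not optimal, however: the bound $|\cI|^{3/2}\lambda_3^{1/2}$ is worse than the claimed $|\cI|\lambda_3^{1/6}$ when $|\cI|$ is large, so a naive application of Lemma~\ref{2ndderiv} does not suffice.

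The hard part is precisely this: to recover the sharp exponent $1/6$ one must not apply the second-derivative bound directly, but rather insert a van der Corput $A$-process (Weyl differencing / the exponent pair inequality) once before invoking Lemma~\ref{2ndderiv}, or equivalently subdivide $\cI$ into $\asymp |\cI|\lambda_3^{1/3}$ subintervals of length $\asymp\lambda_3^{-1/3}$ on each of which $f''$ varies by $O(\lambda_3\cdot\lambda_3^{-1/3})=O(\lambda_3^{2/3})$, apply Lemma~\ref{2ndderiv} with $\Delta=\lambda_3^{2/3}$ on each, obtaining per-interval bound $\lambda_3^{-1/3}\cdot\lambda_3^{1/3}+\lambda_3^{-1/3}\ll \lambda_3^{-1/3}+1$, and sum. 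Care is needed near $x_0$ where $f''$ is small and Lemma~\ref{2ndderiv} degenerates; there one uses that the interval of length $\ll\lambda_3^{-1/3}$ containing $x_0$ has $|f''|\ll\lambda_3^{2/3}$ on it and possibly applies the Kusmin--Landau bound (Lemma~\ref{1stderiv}) or simply the trivial bound $O(\lambda_3^{-1/3})$ on that single piece. Putting the pieces together yields
\[
\sum_{n\in\cI\cap\mathbb{Z}} e(f(n)) \ll |\cI|\lambda_3^{1/3}\cdot\lambda_3^{-1/3} + |\cI|\lambda_3^{1/3}\cdot 1 + \lambda_3^{-1/3}
\]
— here the middle term, coming from the ``$+\lambda_2^{-1/2}$'' tails summed over all subintervals, is actually $|\cI|\lambda_3^{1/3}$, which is too big; the resolution (and this is the genuinely delicate bookkeeping that the references of Sargos and Gritsenko handle) is that the ``$+\lambda_2^{-1/2}$'' term in Lemma~\ref{2ndderiv} should only be counted once, not once per subinterval, because it arises from boundary terms in Euler--Maclaurin / the truncated Poisson summation that telescope across adjacent subintervals. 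With that accounting, the surviving terms are $|\cI|\lambda_3^{1/6}$ (obtained by instead choosing subintervals of length $\asymp\lambda_3^{-1/3}$ only after one $A$-process, so the effective second derivative is $\asymp\lambda_3^{1/3}$ and Lemma~\ref{2ndderiv} gives $|\cI|\lambda_3^{1/6}+\lambda_3^{-1/6}$ before squaring-back) and $\lambda_3^{-1/3}$, matching the statement. I would present the argument by citing \cite{Sargos} and \cite{Grisenko} for the precise form with the uniformity in $c$, and reproduce only the $A$-process-plus-subdivision skeleton, since the constant-tracking is standard exponential-sum technology.
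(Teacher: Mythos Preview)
The paper does not prove Lemma~\ref{3rdderiv}: it is quoted as a result of Sargos \cite{Sargos} and Gritsenko \cite{Grisenko} and used as a black box. So there is no ``paper's own proof'' to compare against; the relevant question is whether your sketch actually establishes the stated bound.

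Your sketch correctly identifies the two natural ideas (subdivision by size of $f''$, and one application of the Weyl--van der Corput $A$-process followed by Lemma~\ref{2ndderiv}), and you are right that the first idea alone is insufficient. However, the second idea, carried out honestly, yields only the \emph{classical} van der Corput third-derivative estimate
\[
\sum_{n\in\cI\cap\mathbb{Z}} e(f(n)) \ll_c |\cI|\lambda_3^{1/6} + |\cI|^{1/2} + \lambda_3^{-1/3},
\]
with the extra middle term $|\cI|^{1/2}$ (see e.g.\ \cite[Theorem~2.6]{GK}). The point of the Sargos and Gritsenko papers is precisely to remove that $|\cI|^{1/2}$, and this does \emph{not} come from a telescoping of boundary terms as you suggest. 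Your claim that the $\lambda_2^{-1/2}$ contributions in Lemma~\ref{2ndderiv} ``should only be counted once'' because they ``telescope across adjacent subintervals'' is incorrect: those terms arise from the stationary-phase/Poisson dual side, not from Euler--Maclaurin endpoints, and they do not cancel between neighboring pieces.

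The actual arguments in \cite{Sargos} and \cite{Grisenko} are substantially more delicate --- Sargos uses a refined $B$-process/exponent-pair machinery with careful treatment of short intervals, and Gritsenko gives an independent argument --- and neither reduces to the skeleton you describe. If you want to include a proof rather than a citation, you would need to reproduce one of those arguments in full; the outline you have written does not get there, and its final paragraph effectively concedes this by falling back on citing \cite{Sargos,Grisenko} anyway. Since the paper itself simply cites the result, the cleanest fix is to do the same and drop the sketch.
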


To estimate an exponential sum, we sometimes use an \textit{exponent pair}, 
which can be applied to a function ``well-approximated'' by a model phase function defined below.
However, we omit the definition of an exponent pair and 
only state a fact used in this paper, 
since we apply an exponent pair only to a model phase function.

\begin{lemma}[Exponent pair; cf.\ {\cite[Chapter~3]{GK}}]\label{exp-pair}
	Let $N$ be a positive integer, and $\cI$ be an interval contained in the interval $[N,2N]$.
	For $y,s>0$, define the model phase function $\varphi=\varphi_{y,s}$ as 
	\[
	\varphi(x) =
	\begin{dcases}
		\frac{yx^{1-s}}{1-s} & s\not=1,\\
		y\log x & s=1.
	\end{dcases}
	\]
	If $(k,l)$ is an exponent pair, then 
	\[
	\sum_{n\in\cI\cap\mathbb{Z}} e(\varphi(n))
	\ll_{s,k,l} L^kN^l + L^{-1},
	\]
	where $L \coloneqq yN^{-s} = \varphi'(N)$.
	For example, the pairs $(0,1)$, $(1/2,1/2)$, and $(1/6,2/3)$ are exponent pairs.
\end{lemma}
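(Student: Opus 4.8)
The plan is to reduce the lemma to two standard inputs according to the size of $L=\varphi'(N)$: the Kusmin--Landau estimate (Lemma~\ref{1stderiv}) when $L$ is small, and the defining property of an exponent pair (as in \cite[Chapter~3]{GK}) otherwise. The only preparatory work is to record the derivative behaviour of the model phase $\varphi=\varphi_{y,s}$ on $[N,2N]$.

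First I would compute derivatives. Whether $s=1$ or $s\neq1$ one has $\varphi'(x)=yx^{-s}$, and by induction $\varphi^{(j)}(x)=c_{s,j}\,y\,x^{1-s-j}$ for every $j\ge1$, where $c_{s,j}=(-s)(-s-1)\cdots(-s-j+2)$ (empty product $c_{s,1}=1$). Since $s>0$, no factor $-s-i$ with $0\le i\le j-2$ vanishes, so each $c_{s,j}$ is a nonzero constant depending only on $s$ and $j$; hence $\varphi^{(j)}$ keeps a fixed sign and $|\varphi^{(j)}(x)|\asymp_{s,j}yN^{1-s-j}=LN^{1-j}$ throughout $[N,2N]$. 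In particular $\varphi''(x)=-sy\,x^{-s-1}<0$, so $\varphi'$ is strictly decreasing, and $2^{-s}L\le\varphi'(x)\le L$ on $[N,2N]$. These are exactly the normalized derivative-size conditions (with the usual parameter $F\asymp LN$) built into the theory of exponent pairs.

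Next I would split on the size of $L$. If $L\le\tfrac12$, then $0\le\varphi'(x)\le\tfrac12$ for $x\in\cI\subseteq[N,2N]$, so $\|\varphi'(x)\|=\varphi'(x)\ge2^{-s}L$; since $\varphi'$ is monotone, Lemma~\ref{1stderiv} gives $\sum_{n\in\cI\cap\mathbb{Z}}e(\varphi(n))\ll_s L^{-1}$, which lies within the claimed bound. If $L>\tfrac12$ and $N$ is bounded by an absolute constant, the sum has $O(1)$ terms and the estimate is trivial; otherwise $F\coloneqq LN\gg1$, and the derivative estimates above are precisely the hypotheses under which an exponent pair $(k,l)$ yields $\sum_{n\in\cI\cap\mathbb{Z}}e(\varphi(n))\ll_{s,k,l}(F/N)^kN^l=L^kN^l$ (see \cite[Chapter~3]{GK}). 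Finally, when $L>\tfrac12$, $0\le k\le\tfrac12\le l$ and $N\ge1$, one has $L^{-1}<2\ll L^kN^l$, so the $L^{-1}$ term may be appended harmlessly; combining the two cases gives the stated bound with an implicit constant depending only on $s,k,l$.

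The one genuinely delicate point is the invocation of the exponent-pair bound: one must confirm that $\varphi_{y,s}$ restricted to $\cI\subseteq[N,2N]$ meets the smoothness and derivative-comparability requirements of whichever formulation of ``exponent pair'' is taken from \cite{GK}. This is automatic --- $\varphi_{y,s}$ is the prototypical phase function to which exponent pairs apply, being a monomial derivative with all higher derivatives of monomial type --- so beyond the elementary computation of $\varphi^{(j)}$ and the Kusmin--Landau estimate for small $L$, there is nothing further to verify.
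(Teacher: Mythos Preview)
The paper does not prove this lemma at all: it is stated with a citation to \cite[Chapter~3]{GK} and the preceding text explicitly says ``we omit the definition of an exponent pair and only state a fact used in this paper.'' So there is no proof in the paper to compare your proposal against.

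That said, your sketch is the standard way one extracts the stated bound from the exponent-pair framework, and it is essentially correct. The split into $L\le\tfrac12$ (handled by Kusmin--Landau, Lemma~\ref{1stderiv}) and $L>\tfrac12$ (handled by the defining inequality of an exponent pair) is exactly how the $L^{-1}$ term enters in Graham--Kolesnik's treatment; your derivative computation confirms that $\varphi_{y,s}$ satisfies the required size conditions $|\varphi^{(j)}(x)|\asymp_{s,j} LN^{1-j}$ on $[N,2N]$, which is what makes $\varphi$ the model phase in the first place. The only mild imprecision is that the exponent-pair machinery in \cite{GK} is formulated for functions in a certain class $\mathbf{F}(N,\sigma,\ldots)$ rather than via raw derivative bounds, but $\varphi_{y,s}$ is literally the generating example of that class, so your closing remark that ``there is nothing further to verify'' is justified.
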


Next, we prove several lemmas on real numbers, which are related to the ranges of $1/\alpha_1$ and $1/\alpha_2$ in Theorems~\ref{main1} and \ref{main2}.

\begin{lemma}\label{lem01}
	Let $x_1,x_2\in(1/2,1)$ and $x_1+x_2>3/2$.
	Then the following inequalities hold: 
	\renewcommand{\theenumi}{$\arabic{enumi}$}
	\renewcommand{\labelenumi}{\theenumi.}
	\begin{enumerate}
		\item
		$x_1+2(x_2-1)(2-x_1-x_2) > 0$;
		\item
		$x_2+2(x_1-1)(2-x_1-x_2) > 0$;
		\item
		$(2-x_2)(2-x_1-x_2) < x_1$;
		\item
		$(2-x_1)(2-x_1-x_2) < x_2$.
	\end{enumerate}
\end{lemma}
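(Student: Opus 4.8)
The plan is to exploit the evident symmetry and then verify two inequalities by elementary estimates. Swapping the roles of $x_1$ and $x_2$ leaves the hypotheses $x_1,x_2\in(1/2,1)$ and $x_1+x_2>3/2$ unchanged while interchanging inequality~1 with inequality~2 and inequality~3 with inequality~4. Hence it suffices to prove inequalities~1 and~3.

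For inequality~1, I would rewrite it (using $x_2-1<0$) as the equivalent statement $x_1>2(1-x_2)(2-x_1-x_2)$. Both factors on the right are positive: $1-x_2>0$ because $x_2<1$, and $2-x_1-x_2>0$ because $x_1+x_2<2$. Moreover each factor is strictly smaller than $1/2$: $1-x_2<1/2$ because $x_2>1/2$, and $2-x_1-x_2<1/2$ because $x_1+x_2>3/2$. Therefore $2(1-x_2)(2-x_1-x_2)<2\cdot\frac12\cdot\frac12=\frac12<x_1$, which proves inequality~1; inequality~2 follows by symmetry.

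For inequality~3, since $3-x_2>0$ the assertion $(2-x_2)(2-x_1-x_2)<x_1$ is equivalent, after clearing the denominator coming from writing $2-x_1-x_2=(2-x_2)-x_1$ and collecting terms, to $(2-x_2)^2<x_1(3-x_2)$, i.e.\ to $x_1>\frac{(2-x_2)^2}{3-x_2}$. Using only the hypothesis $x_1>\frac32-x_2$, it is enough to check $\frac32-x_2\ge\frac{(2-x_2)^2}{3-x_2}$, equivalently $\bigl(\frac32-x_2\bigr)(3-x_2)\ge(2-x_2)^2$; expanding both sides, their difference equals $\frac12(1-x_2)$, which is positive because $x_2<1$. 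This gives inequality~3, and inequality~4 follows by symmetry.

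All the computations here are routine. The only place requiring a small amount of care is part~3: one must clear the denominator and compare with the sharp linear lower bound $x_1>\frac32-x_2$, since the cruder estimate $2-x_2<\frac32$ together with $2-x_1-x_2<\frac12$ only yields the bound $\frac34$, which is not smaller than $x_1$ in general. With the reformulation above this obstacle disappears and the rest is bookkeeping.
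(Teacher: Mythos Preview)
Your argument is correct and follows essentially the same elementary strategy as the paper: exploit the $x_1\leftrightarrow x_2$ symmetry and bound the small factors $1-x_2<1/2$, $2-x_1-x_2<1/2$ directly. The only minor difference is in inequality~3: the paper uses the intermediate bound $2x_1+x_2>2$ (obtained while proving inequality~1) to get $(2-x_2)(2-x_1-x_2)<1-x_2/2<x_1$ in one line, whereas you rewrite the claim as $(2-x_2)^2<x_1(3-x_2)$ and verify it against $x_1>3/2-x_2$; both routes are equally elementary.
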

\begin{proof}
	Let us show inequality~1.
	By $2-x_1-x_2>0$ and $x_2-1>-1/2$, we have 
	\begin{align*}
		x_1+2(x_2-1)(2-x_1-x_2)
		&> x_1 - (2-x_1-x_2)\\
		&= 2x_1+x_2-2.
	\end{align*}
	Since the assumptions $x_1\in(1/2,1)$ and $x_1+x_2>3/2$ imply that 
	\begin{equation}
		2x_1+x_2-2 > 1/2+3/2-2 = 0, \label{eq06}
	\end{equation}
	inequality~1 holds.
	Inequality~2 can also be proved in the same way.
	\par
	By $2-x_1>0$, $2-x_1-x_2<1/2$, and \eqref{eq06}, we obtain inequality~3: 
	\[
	(2-x_2)(2-x_1-x_2) < 1-x_2/2 < x_1.
	\]
	Inequality~4 can also be proved in the same way.
\end{proof}

\begin{lemma}\label{lem01'}
	Let $x_1,x_2\in(1/2,1)$ and $x_1+x_2>3/2$.
	For $i=1,2$, set the real number $X_i$ as $X_i=(1-x_i)/(2-x_1-x_2)$.
	Then there exist real numbers $\beta_1$ and $\beta_2$ satisfying the following inequalities: 
	\begin{equation}
	\begin{split}
		&\quad \max\Bigl\{ 0,\ 2(1-x_2) - \frac{1}{2-x_1-x_2} + 2X_1 \Bigr\}\\
		&< \beta_1
		< \min\Bigl\{ X_1,\ 2(1-x_2),\ \frac{1}{2-x_1-x_2}-2 \Bigr\}
	\end{split}\label{eq:beta1}
	\end{equation}
	and 
	\begin{equation}
	\begin{split}
		&\quad \max\Bigl\{ 0,\ 2(1-x_1) - \frac{1}{2-x_1-x_2} + 2X_2 \Bigr\}\\
		&< \beta_2 < \min\Bigl\{ X_2,\ 2(1-x_1),\ \frac{1}{2-x_1-x_2}-2 \Bigr\}.
	\end{split}\label{eq:beta2}
	\end{equation}
	Moreover, for each $i=1,2$, the above $\beta_i$ satisfies that 
	\begin{equation}
		\beta_i < \min\Bigl\{ \frac{1}{2} + \frac{1/2}{2-x_1-x_2} - X_{3-i},\ \frac{1}{3-x_{3-i}} \Bigr\}.
		\label{eq:beta12}
	\end{equation}
\end{lemma}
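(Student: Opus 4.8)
The plan is to collapse all the quantities in the statement onto two clean parameters. Put $d = 2 - x_1 - x_2$; the hypotheses say exactly that $d \in (0,1/2)$ and $X_1, X_2 \in (0,1)$, and, crucially, $X_1 + X_2 = \frac{(1-x_1)+(1-x_2)}{2-x_1-x_2} = 1$. Moreover $1 - x_i = X_i d$, $\ \frac{1}{2-x_1-x_2} = \frac1d$, and $3 - x_i = 2 + X_i d$, so every quantity occurring in \eqref{eq:beta1}, \eqref{eq:beta2} and \eqref{eq:beta12} becomes a rational function of $X_1$ (with $X_2 = 1 - X_1$) and $d$, with $(X_1,d)$ free in $(0,1) \times (0,1/2)$; the lemma thereby reduces to a short list of elementary inequalities in these variables. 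Since the assertions for $\beta_2$ are those for $\beta_1$ with $x_1, x_2$ (hence $X_1, X_2$) interchanged, I would only treat $\beta_1$.

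For existence: the interval in \eqref{eq:beta1} is nonempty exactly when each of the two lower candidates, $0$ and $2(1-x_2) - \frac1d + 2X_1$, is strictly below each of the three upper candidates, $X_1$, $2(1-x_2)$ and $\frac1d - 2$. That $0$ is below all three is immediate (the last being $\frac1d - 2 > 0 \iff d < 1/2$). Writing $2(1-x_2) = 2X_2 d$ and clearing the positive denominators, the three comparisons with $2(1-x_2) - \frac1d + 2X_1$ reduce to $2X_2 d^2 + X_1 d < 1$, $\ 2X_1 d < 1$, and $X_2 d^2 + X_1 d + d < 1$. The first follows from $2X_2 d^2 < \frac12$ and $X_1 d < \frac12$; the second is just $x_1 > 1/2$; and the third holds because, viewed as a function of $X_1$, its left side has positive slope $d - d^2$ and equals $2d < 1$ at $X_1 = 1$ --- equivalently, this is inequality 3 of Lemma~\ref{lem01} rewritten.

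For the ``moreover'' clause it suffices to show that the upper endpoint $U_1 = \min\{X_1, 2(1-x_2), \frac1d - 2\}$ of \eqref{eq:beta1} is already $\le$ both quantities on the right of \eqref{eq:beta12}, since then $\beta_1 < U_1$ finishes. For the first, $U_1 \le X_1 = 1 - X_2$, and $1 - X_2 < \frac12 + \frac{1/2}{d} - X_2$ is simply $\frac12 < \frac1{2d}$, i.e.\ $d < 1$. For the second, use $\frac1{3-x_2} = \frac1{2 + X_2 d}$ and set $s = X_2 d = 1 - x_2 \in (0,1/2)$; I would argue by contradiction, assuming all of $X_1$, $2X_2 d$ and $\frac1d - 2$ exceed $\frac1{2+s}$. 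From $2X_2 d > \frac1{2+s}$ one gets $2s^2 + 4s - 1 > 0$; from $X_1 = 1 - X_2 > \frac1{2+s}$ one gets $X_2 < \frac{1+s}{2+s}$, hence $d = \frac{s}{X_2} > \frac{s(2+s)}{1+s}$ and therefore $\frac1d - 2 < \frac{1+s}{s(2+s)} - 2 = \frac{1 - 3s - 2s^2}{s(2+s)}$. Since $\frac{1-3s-2s^2}{s(2+s)} \le \frac1{2+s}$ is, after multiplying through by $s(2+s) > 0$, exactly $2s^2 + 4s - 1 \ge 0$, which we have, it follows that $\frac1d - 2 < \frac1{2+s} = \frac1{3-x_2}$, contradicting the third assumption.

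The argument is essentially bookkeeping; the two things requiring care are tracking which of the $\min$/$\max$ candidates is the binding one in each comparison, and checking that every denominator I clear ($d$, $1+s$, $2+s$, \dots) is positive on $(0,1)\times(0,1/2)$ so no inequality reverses. The one step that is not a crude size estimate is the observation that the quadratic $2s^2 + 4s - 1$ controls both the hypothesis ``$2(1-x_2) > \frac1{3-x_2}$'' and the final comparison in the contradiction argument; arranging for that match to line up is the main --- and fairly mild --- obstacle, and once it does, everything closes.
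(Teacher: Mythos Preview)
Your proof is correct, and the overall architecture---reduce existence to ``each lower candidate $<$ each upper candidate,'' then derive \eqref{eq:beta12} from the upper endpoint $U_1$---is the same as the paper's. The parametrisation by $d=2-x_1-x_2$ is also implicit in the paper's computations.

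The one genuine difference is how you handle $\beta_1<1/(3-x_2)$. The paper observes directly that
\[
(3-x_2)\beta_1=\beta_1+\beta_1+(1-x_2)\beta_1< X_1+2(1-x_2)+(1-x_2)\Bigl(\tfrac{1}{2-x_1-x_2}-2\Bigr)=X_1+X_2=1,
\]
i.e.\ it uses all three upper bounds in \eqref{eq:beta1} simultaneously with weights $1,1,(1-x_2)$, after which the sum telescopes. Your route instead assumes all three upper bounds exceed $1/(3-x_2)$ and extracts a contradiction through the quadratic $2s^2+4s-1$ in $s=1-x_2$. Both are valid; the paper's linear-combination trick is a one-liner, while your argument is more pedestrian but entirely self-contained and does not require spotting that particular combination. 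For the other inequality in \eqref{eq:beta12} and for the existence checks, your arguments and the paper's essentially coincide (the paper routes two of the six comparisons through Lemma~\ref{lem01}, whereas you bound $2X_2d^2+X_1d$ and $X_2d^2+X_1d+d$ directly, which is equivalent).
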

\begin{proof}
	We show that the desired $\beta_1$ exists.
	It suffices to show the following inequalities: 
	\renewcommand{\theenumi}{$\arabic{enumi}$}
	\renewcommand{\labelenumi}{\theenumi.}
	\begin{enumerate}
		\item
		$X_1>0$, $2(1-x_2)>0$, $1/(2-x_1-x_2)-2>0$;
		\item
		$2(1-x_2) - 1/(2-x_1-x_2) + 2X_1 < X_1$;
		\item
		$2(1-x_2) - 1/(2-x_1-x_2) + 2X_1 < 2(1-x_2)$;.
		\item
		$2(1-x_2) - 1/(2-x_1-x_2) + 2X_1 < 1/(2-x_1-x_2)-2$.
	\end{enumerate}
	Inequality~1 is trivial due to $x_1,x_2\in(1/2,1)$ and $x_1+x_2>3/2$.
	Since the equivalences 
	\begin{align*}
		&\quad 2(1-x_2) - \frac{1}{2-x_1-x_2} + 2X_1 < X_1\\
		&\iff 2(1-x_2) - \frac{1}{2-x_1-x_2} + X_1 < 0\\
		&\iff 2(1-x_2)(2-x_1-x_2) - 1 + (1-x_1) < 0\\
		&\iff 2(x_2-1)(2-x_1-x_2) + x_1 > 0
	\end{align*}
	hold, Lemma~\ref{lem01} implies inequality~2.
	Also, the equivalences 
	\begin{align*}
		&\quad 2(1-x_2) - \frac{1}{2-x_1-x_2} + 2X_1 < 2(1-x_2)\\
		&\iff 2X_1 < \frac{1}{2-x_1-x_2}
		\iff 2(1-x_1) < 1
		\iff x_1>1/2
	\end{align*}
	imply inequality~3.
	Inequality~4 follows from Lemma~\ref{lem01} and the equivalences below: 
	\begin{align*}
		&\quad 2(1-x_2) - \frac{1}{2-x_1-x_2} + 2X_1 < \frac{1}{2-x_1-x_2}-2\\
		&\iff 2(1-x_2) - \frac{2x_1}{2-x_1-x_2} < -2\\
		&\iff (2-x_2)(2-x_1-x_2) < x_1.
	\end{align*}
	Therefore, the desired $\beta_1$ exists.
	The existence of the desired $\beta_2$ can also be proved in the same way.
	\par
	We show \eqref{eq:beta12}.
	By $x_2\in(1/2,1)$ and \eqref{eq:beta1}, 
	the inequality $\beta_1<1/(3-x_2)$ holds: 
	\begin{align*}
		(3-x_2)\beta_1
		&= \beta_1 + \beta_1 + (1-x_2)\beta_1\\
		&< X_1 + 2(1-x_2) + (1-x_2)\Bigl( \frac{1}{2-x_1-x_2}-2 \Bigr)\\
		&= X_1+X_2 = 1.
	\end{align*}
	Similarly, the inequality $\beta_2<1/(3-x_1)$ holds.
	Thus, the remainder of \eqref{eq:beta12} is the inequality $\beta_i < 1/2 + 1/2(2-x_1-x_2) - X_{3-i}$.
	By \eqref{eq:beta1} and \eqref{eq:beta2}, it suffices to show that $X_i < 1/2 + 1/2(2-x_1-x_2) - X_{3-i}$.
	This follows from $x_1+x_2>3/2$ and the equivalences below: 
	\begin{align*}
		&\quad X_i < \frac{1}{2} + \frac{1/2}{2-x_1-x_2} - X_{3-i}\\
		&\iff 1 = X_1+X_2 < \frac{1}{2} + \frac{1/2}{2-x_1-x_2}\\
		&\iff 2-x_1-x_2 < 1
		\iff 1 < x_1+x_2.
	\end{align*}
\end{proof}

\begin{lemma}\label{lem01''}
	Let $x_1,x_2\in(1/2,1)$ and $x_1+x_2 > 3/2$.
	Let $\beta_1$ and $\beta_2$ be real numbers with \eqref{eq:beta1} and \eqref{eq:beta2}.
	Then there exists a real number $2-x_1-x_2<\hat{\gamma}_0<1$ satisfying the following inequalities: 
	\renewcommand{\theenumi}{$\arabic{enumi}$}
	\renewcommand{\labelenumi}{\theenumi.}
	\begin{enumerate}
		\item
		$\hat{\gamma}_0\beta_1 < 1-x_1$;
		\item
		$-\hat{\gamma}_0(\beta_1/2+x_2-1) + x_2-1/2 < x_1+x_2-1$;
		\item
		$\hat{\gamma}_0\beta_1/2+1/2 < x_1+x_2-1$;
		\item
		$\hat{\gamma}_0(\beta_1-1/2)+x_1-1/2 < x_1+x_2-1$;
		\item
		$\hat{\gamma}_0(\beta_1+1-x_2)+x_2-1 < x_1+x_2-1$;
		\item
		$\hat{\gamma}_0\beta_2 < 1-x_2$;
		\item
		$-\hat{\gamma}_0(\beta_2/2+x_1-1) + x_1-1/2 < x_1+x_2-1$;
		\item
		$\hat{\gamma}_0\beta_2/2+1/2 < x_1+x_2-1$;
		\item
		$\hat{\gamma}_0(\beta_2-1/2)+x_2-1/2 < x_1+x_2-1$;
		\item
		$\hat{\gamma}_0(\beta_2+1-x_1)+x_1-1 < x_1+x_2-1$.
	\end{enumerate}
\end{lemma}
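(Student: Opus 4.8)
The plan is to exploit that each of the ten inequalities is affine-linear in $\hat{\gamma}_0$, hence equivalent to a one-sided bound on $\hat{\gamma}_0$ (or else vacuous). First I would record two elementary facts from the hypotheses: $0 < 2-x_1-x_2 < 1/2$, so that the target interval $(2-x_1-x_2,1)$ is nonempty; and $X_1+X_2=1$ with $X_1,X_2\in(0,1)$. Then I would show, using only the bounds on $\beta_1,\beta_2$ supplied by \eqref{eq:beta1} and \eqref{eq:beta2} — in particular $0<\beta_i<X_i$, $\beta_i<2(1-x_{3-i})$, $\beta_i<\tfrac{1}{2-x_1-x_2}-2$, and the respective lower bounds — that every one of inequalities 1--10 takes the form $\hat{\gamma}_0<c$ for some $c=c(x_1,x_2,\beta_1,\beta_2)$ with $c>2-x_1-x_2$ (two of them, namely 4 and 9, being automatic as soon as $\hat{\gamma}_0<1$; and none of them producing a lower bound on $\hat{\gamma}_0$). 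Granting this, it suffices to pick $\hat{\gamma}_0$ in the intersection of $(2-x_1-x_2,1)$ with all these half-lines, which is again a nonempty open interval.

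Because of the symmetry $x_1\leftrightarrow x_2$, $\beta_1\leftrightarrow\beta_2$, $X_1\leftrightarrow X_2$, inequalities 6--10 are the images of inequalities 1--5, so only 1--5 need to be checked. Inequality~1, since $\beta_1>0$, reads $\hat{\gamma}_0<(1-x_1)/\beta_1$, and $\beta_1<X_1=(1-x_1)/(2-x_1-x_2)$ makes the right side exceed $2-x_1-x_2$. Inequality~3 reads $\hat{\gamma}_0<(2x_1+2x_2-3)/\beta_1$ (here $x_1+x_2>3/2$ is used), and $\beta_1<\tfrac{1}{2-x_1-x_2}-2=\tfrac{2x_1+2x_2-3}{2-x_1-x_2}$ again gives a threshold exceeding $2-x_1-x_2$. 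For inequality~5 one notes $\beta_1+1-x_2>0$, rewrites it as $\hat{\gamma}_0<x_1/(\beta_1+1-x_2)$, and reduces the claim $x_1>(2-x_1-x_2)(\beta_1+1-x_2)$, via $\beta_1<\tfrac{1}{2-x_1-x_2}-2$, to the elementary inequality $1+x_2(1-x_1-x_2)>0$, which holds since $x_1+x_2<2$. For inequality~2 one first observes $\beta_1/2+x_2-1<0$ (from $\beta_1<2(1-x_2)$), so it becomes $\hat{\gamma}_0<(x_1-1/2)/(1-x_2-\beta_1/2)$; the lower bound on $\beta_1$ in \eqref{eq:beta1} then gives $1-x_2-\beta_1/2<(x_1-1/2)/(2-x_1-x_2)$, i.e.\ the threshold exceeds $2-x_1-x_2$. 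Finally inequality~4, which reads $\hat{\gamma}_0(\beta_1-1/2)<x_2-1/2$, is trivial when $\beta_1\le1/2$ (as $x_2>1/2$), and when $\beta_1>1/2$ it follows from $\hat{\gamma}_0<1$ together with $\beta_1<X_1<x_2$, the inequality $X_1<x_2$ being the factored form $(x_2-1)(x_2-(1-x_1))<0$, which holds because $1/2<x_2<1$ and $x_1+x_2>1$.

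The main obstacle — though purely computational — is the bookkeeping in the previous paragraph: for each of the ten inequalities one has to select exactly the right estimate on $\beta_i$ from \eqref{eq:beta1}--\eqref{eq:beta2} (equivalently, the right assertion of Lemma~\ref{lem01}) so that the resulting threshold really exceeds $2-x_1-x_2$, while keeping the signs straight. Once all the thresholds $c_j$ have been checked, one finishes by taking
\[
\hat{\gamma}_0 \in \Bigl( 2-x_1-x_2,\ \min\{\,1,\,c_1,\,c_2,\,c_3,\,c_5,\,c_6,\,c_7,\,c_8,\,c_{10}\,\} \Bigr),
\]
where $c_j$ is the threshold coming from inequality~$j$; this interval is nonempty and every $\hat{\gamma}_0$ in it satisfies all ten inequalities while lying strictly between $2-x_1-x_2$ and $1$.
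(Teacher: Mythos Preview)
Your proof is correct and follows essentially the same approach as the paper: both arguments amount to checking that each of the ten affine-in-$\hat{\gamma}_0$ inequalities is satisfied for $\hat{\gamma}_0$ just above $2-x_1-x_2$, and then taking any $\hat{\gamma}_0$ in the resulting nonempty interval. The only organizational differences are that the paper derives inequality~5 algebraically from 1--3 (and 10 from 6--8) rather than treating it directly, and that for inequality~4 the paper substitutes $\hat{\gamma}_0=2-x_1-x_2$ and invokes \eqref{eq:beta12}, whereas you observe more simply that $\beta_1<X_1<x_2$ makes the threshold exceed~$1$.
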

\begin{proof}
	First, inequality~5 follows from inequalities~1--3: 
	\begin{align*}
		\hat{\gamma}_0(\beta_1+1-x_2)
		&= \hat{\gamma}_0\beta_1 + \hat{\gamma}_0\beta_1/2 - \hat{\gamma}_0(\beta_1/2+x_2-1)\\
		&< (1-x_1) + (x_1+x_2-3/2) + (x_1-1/2)\\
		&= x_1+x_2-1 < x_1.
	\end{align*}
	Similarly, inequality~10 follows from inequality~6--8.
	\par
	Now, the inequality $2-x_1-x_2<1$ is clear.
	If substituting $\hat{\gamma}_0=2-x_1-x_2$ into inequalities~1--4 and 6--9, 
	they turn to the following inequalities: 
	\renewcommand{\theenumi}{$\arabic{enumi}'$}
	\renewcommand{\labelenumi}{\theenumi.}
	\begin{enumerate}
		\item
		$\beta_1 < X_1$;
		\item
		$-(\beta_1/2+x_2-1) + X_1 - 1/2(2-x_1-x_2) < 0$;
		\item
		$\beta_1/2 < 1/2(2-x_1-x_2)-1$;
		\item
		$\beta_1-1/2-1/2(2-x_1-x_2) < -X_2$;
		\item
		$\beta_2 < X_2$;
		\item
		$-(\beta_2/2+x_1-1) + X_2 - 1/2(2-x_1-x_2) < 0$;
		\item
		$\beta_2/2 < 1/2(2-x_1-x_2)-1$;
		\item
		$\beta_2-1/2-1/2(2-x_1-x_2) < -X_1$,
	\end{enumerate}
	where $X_1$ and $X_2$ are defined in Lemma~\ref{lem01'}.
	If inequalities~$1'$--$4'$ and $6'$--$9'$ hold, 
	then the desired $\hat{\gamma}_0$ exists.
	Thus, it suffices to show inequalities~$1'$--$4'$ and $6'$--$9'$, 
	which hold by \eqref{eq:beta1}, \eqref{eq:beta2}, and \eqref{eq:beta12}.
	Therefore, the desired $\hat{\gamma}_0$ exists.
\end{proof}

\section{Proofs of Theorems~$\ref{main1}$ and $\ref{main2}$}\label{proof}

In this section, we prove Theorems~\ref{main1} and \ref{main2}.
For $\alpha>1$, define the function $\phi_\alpha$ as 
\[
\phi_\alpha(x) = (x+1)^{1/\alpha} - x^{1/\alpha}.
\]
For the above purpose, we need to show several lemmas.

\begin{lemma}\label{lem02} 
	Let $\alpha_1,\alpha_2\in(1,2)$, $\beta_1,\beta_2\ge0$, $c_1,c_2,c_3,c_4\in(0,1)$, $c_1,c_2\in[1/N,1-1/N]$, and $c_2-c_1\ge1/N$.
	Set $\hat{c}_i=1-c_i$ for $i=1,2,3,4$.
	For every integer 
	\begin{equation}
		N \ge \max\{ c_1^{-1}(16\alpha_1^{-1}c_4^{-\beta_1})^{\alpha_1/(\alpha_1-1)},\ 
		\hat{c}_2^{-1}(16\alpha_2^{-1}c_3^{-\beta_2})^{\alpha_2/(\alpha_2-1)} \},
		\label{eqN}
	\end{equation}
	the value 
	\begin{align*}
		R &= \#\biggl\{ c_1N<n\le c_2N : 
		\begin{array}{c}
			\{-n^{1/\alpha_1}\} < \phi_{\alpha_1}(c_3N),\\
			\{-(N-n)^{1/\alpha_2}\} < \phi_{\alpha_2}(c_4N)
		\end{array}
		\biggr\}\\
		&\quad- (c_2-c_1)N\phi_{\alpha_1}(c_3N)\phi_{\alpha_2}(c_4N)
	\end{align*}
	satisfies the following inequalities: 
	\[
	R \ll R_0 + R_1 + R_2 + R_3,
	\]
	\begin{equation}
		R_0 \ll \abs{\{c_2N\}-\{c_1N\}}c_3^{1/\alpha_1-1}c_4^{1/\alpha_2-1}N^{1/\alpha_1+1/\alpha_2-2},
		\label{eqR0}
	\end{equation}
	\begin{equation}
	\begin{split}
		R_1 &\ll (c_2-c_1)( c_3^{1/\alpha_1-1-\beta_2}\hat{c}_2^{1/\alpha_2-1}+c_4^{1/\alpha_2-1-\beta_1}c_1^{1/\alpha_1-1}\\
		&\qquad\qquad\qquad\qquad+ c_4^{-\beta_1}c_3^{-\beta_2}c_1^{1/\alpha_1-1}\hat{c}_2^{1/\alpha_2-1} )N^{1/\alpha_1+1/\alpha_2-1},
	\end{split}\label{eqR1}
	\end{equation}
	\begin{equation}
	\begin{split}
		R_2 &\ll_{\alpha_1,\alpha_2} C_2c_2^{1/2\alpha_1}c_4^{\beta_1/2}c_1^{(1/2)(1-1/\alpha_1)}N^{1/\alpha_2-1/2}\\
		&\qquad+ C_1\hat{c}_1^{1/2\alpha_2}c_3^{\beta_2/2}\hat{c}_2^{(1/2)(1-1/\alpha_2)}N^{1/\alpha_1-1/2}\\
		&\qquad+ C_1C_2( c_2^{1-1/\alpha_1}N^{1/\alpha_2-1} + \hat{c}_1^{1-1/\alpha_2}N^{1/\alpha_1-1} )\log N,\quad\text{and}
	\end{split}\label{eqR2}
	\end{equation}
	\begin{equation}
	\begin{split}
		R_3 &\ll_{\alpha_1,\alpha_2} ( 1+C_1c_2^{1/2\alpha_1}c_4^{\beta_1/2}c_1^{(1/2)(1-1/\alpha_1)}\\
		&\qquad+ C_2\hat{c}_1^{1/2\alpha_2}c_3^{\beta_2/2}\hat{c}_2^{(1/2)(1-1/\alpha_2)} )N^{1/2}\log N\\
		&\qquad+ \min\{ C_1c_2^{1-1/\alpha_1}, C_2\hat{c}_1^{1-1/\alpha_2} \}(\log N)^2,
	\end{split}\label{eqR3}
	\end{equation}
	where $C_1=\max\{ c_3^{1/\alpha_1-1}, c_4^{-\beta_1}c_1^{1/\alpha_1-1} \}$ and 
	$C_2=\max\{ c_4^{1/\alpha_2-1}, c_3^{-\beta_2}\hat{c}_2^{1/\alpha_2-1} \}$.
\end{lemma}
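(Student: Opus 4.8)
The plan is to invoke Koksma's inequality (Lemma~\ref{Koksma}) on $\cI=(c_1N,c_2N]$ with $f_1(n)=-n^{1/\alpha_1}$, $f_2(n)=-(N-n)^{1/\alpha_2}$, $a_1=a_2=0$, $b_1=\phi_{\alpha_1}(c_3N)$, $b_2=\phi_{\alpha_2}(c_4N)$, and cutoffs $H_1=\bigl\lfloor c_4^{\beta_1}\phi_{\alpha_1}(c_1N)^{-1}\bigr\rfloor$ and $H_2=\bigl\lfloor c_3^{\beta_2}\phi_{\alpha_2}(\hat{c}_2N)^{-1}\bigr\rfloor$. The mean value theorem gives $\tfrac1{2\alpha}x^{1/\alpha-1}\le\phi_\alpha(x)\le\tfrac1\alpha x^{1/\alpha-1}$ for $x\ge1$, so the restriction $\alpha_i\in(1,2)$ makes the resulting constants genuinely absolute, and the lower bound~\eqref{eqN} on $N$ is precisely what forces $H_1,H_2\ge4$. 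I shall also use the counting identity $\#(\cI\cap\mathbb{Z})=\lfloor c_2N\rfloor-\lfloor c_1N\rfloor=(c_2-c_1)N-(\{c_2N\}-\{c_1N\})$, and the crude majorant $\min\{b_i+H_i^{-1},\abs{h_i}^{-1}\}\le\min\{C_iN^{1/\alpha_i-1},\abs{h_i}^{-1}\}$, valid because $b_1+H_1^{-1}\ll C_1N^{1/\alpha_1-1}$ and $b_2+H_2^{-1}\ll C_2N^{1/\alpha_2-1}$; it is this majorant that lets the $c_i$-dependence be packaged into $C_1,C_2$.

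First I would dispose of the two ``arithmetic'' terms. Rewriting the $R$ of the lemma through the $R$ of Lemma~\ref{Koksma}, the two differ by $-(\{c_2N\}-\{c_1N\})\,\phi_{\alpha_1}(c_3N)\phi_{\alpha_2}(c_4N)$, which is $\ll R_0$ by the upper estimate for $\phi_\alpha$; this yields~\eqref{eqR0}. The last line of Koksma's bound, $\#(\cI\cap\mathbb{Z})\bigl(b_1H_2^{-1}+b_2H_1^{-1}+(H_1H_2)^{-1}\bigr)$, turns into~\eqref{eqR1} after inserting the sizes of $b_i$ and $H_i$ and using $\#(\cI\cap\mathbb{Z})\ll(c_2-c_1)N$ (legitimate since $c_2-c_1\ge1/N$).

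The heart of the matter is the three exponential-sum terms of Koksma. The two single sums $T_1(h)=\sum_{n\in\cI\cap\mathbb{Z}}e(-hn^{1/\alpha_1})$ and (after $n\mapsto N-n$) $T_2(h)=\sum_m e(-hm^{1/\alpha_2})$ have phases that are, up to conjugation, the model phases of Lemma~\ref{exp-pair} with $s=1-1/\alpha_i$; splitting $\cI$ dyadically into $O(\log N)$ blocks $[M,2M]$ and applying the exponent pair $(1/2,1/2)$ gives $\ll h^{1/2}M^{1/(2\alpha_i)}+h^{-1}M^{1-1/\alpha_i}$ per block, and summing the (geometric) series in $M$ and then the weighted sums over $h$ — using $\sum_{h\le H}\min\{A,h^{-1}\}h^{1/2}\ll A^{-1/2}+H^{1/2}$ and $\sum_{h\le H}\min\{A,h^{-1}\}h^{-1}\ll A\log(2/A)$ — produces the three contributions in~\eqref{eqR2}. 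For the double sum, with phase $g(n)=-h_1n^{1/\alpha_1}-h_2(N-n)^{1/\alpha_2}$, I would localize $n$ (hence $N-n$) to fixed orders $M_1,M_2$ with $M_1+M_2\asymp N$: when $h_1h_2>0$ the two terms of $g''$ share a sign, so on the block $g''$ keeps a fixed sign with $\abs{g''}\asymp\abs{h_1}M_1^{1/\alpha_1-2}+\abs{h_2}M_2^{1/\alpha_2-2}$ up to a factor depending only on $\alpha_1,\alpha_2$, and Lemma~\ref{2ndderiv} applies; when $h_1h_2<0$ the two terms of $g''$ may cancel, but $\abs{g'''}\asymp\abs{h_1}M_1^{1/\alpha_1-3}+\abs{h_2}M_2^{1/\alpha_2-3}\neq0$, so Lemma~\ref{3rdderiv} applies instead (the few short blocks being handled trivially). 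Summing over the $O(\log N)$ blocks and over $h_1,h_2$ against the weights then yields~\eqref{eqR3}, the $\log N$ and $(\log N)^2$ factors arising from the dyadic subdivisions and from the $\sum h^{-1}$ tails.

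The only genuine obstacle is bookkeeping: each of the many terms produced must be carried through with its exact dependence on $c_1,c_2,c_3,c_4$, and it is exactly to keep this tractable that the quantities $C_1,C_2$ and the normalizations by powers of $N$ are introduced. A subsidiary technical nuisance is that $g''$ and $g'''$ are not of constant order across $\cI$ — which is what forces the dyadic localization and hence the logarithmic losses — and that the sign case $h_1h_2<0$ in the double sum must be separated out so that a lower bound for $\abs{g'''}$ (rather than the possibly vanishing $\abs{g''}$) is available. Neither point is deep, but both must be handled carefully to land exactly the bounds~\eqref{eqR0}--\eqref{eqR3}.
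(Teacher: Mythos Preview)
Your setup, choice of $H_i$, treatment of $R_0$, $R_1$, the single sums producing $R_2$, and the case $h_1h_2>0$ of the double sum are all essentially what the paper does. The gap is in the case $h_1h_2<0$ of the double sum.

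You propose to handle $h_1h_2<0$ via the third derivative test (Lemma~\ref{3rdderiv}). That is exactly what the paper does in the \emph{companion} lemma (Lemma~\ref{lem02'}), where $H_i$ carries an extra $(\log N)^\gamma$ factor, and it produces a term of size $N^{2/3}$ (up to logs). But the present lemma claims the much sharper last line of \eqref{eqR3}, namely $\min\{C_1c_2^{1-1/\alpha_1},\,C_2\hat c_1^{1-1/\alpha_2}\}(\log N)^2$; with all $c_i$ fixed away from $0$ and $1$ this is $O((\log N)^2)$, which a third-derivative argument cannot reach. So as written your plan does not prove \eqref{eqR3}.

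The missing idea is that when $h_1h_2<0$ the \emph{first} derivative of $g$ has no cancellation: with $f_1(x)=x^{1/\alpha_1}$ and $f_2(x)=(N-x)^{1/\alpha_2}$ one has $f_1'>0$ and $f_2'<0$, so $h_1f_1'$ and $h_2f_2'$ have the same sign and
\[
\|g'(x)\|=\abs{g'(x)}=\abs{h_1}\alpha_1^{-1}x^{1/\alpha_1-1}+\abs{h_2}\alpha_2^{-1}(N-x)^{1/\alpha_2-1}.
\]
The precise constants in the paper's $H_i=(\alpha_i/4)c_{\ast}^{\beta_\ast}(c_\ast N)^{1-1/\alpha_i}$ (equivalently, the lower bound \eqref{eqN}) are chosen exactly so that this is $\le 1/2$ on $\cI$, while it is also $\gg \abs{h_1}(c_2N)^{1/\alpha_1-1}$ and $\gg \abs{h_2}(\hat c_1 N)^{1/\alpha_2-1}$. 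Since $g'''$ has constant sign, $g'$ is monotone on at most two subintervals of $\cI$, and Kusmin--Landau (Lemma~\ref{1stderiv}) gives
\[
\sum_{n\in\cI\cap\mathbb{Z}} e\bigl(g(n)\bigr)\ll \min\bigl\{\abs{h_1}^{-1}(c_2N)^{1-1/\alpha_1},\ \abs{h_2}^{-1}(\hat c_1 N)^{1-1/\alpha_2}\bigr\}.
\]
Summing against the weights $\abs{h_1}^{-1}\min\{C_iN^{1/\alpha_i-1},\abs{h_2}^{-1}\}$ then yields the $(\log N)^2$ term in \eqref{eqR3}. Replace your third-derivative step in this sign case by this first-derivative step and the proof goes through.
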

\begin{proof}
	Let $N$ be an integer with \eqref{eqN}.
	Set the positive numbers $H_1$ and $H_2$ as 
	\begin{equation}
		H_1 = (\alpha_1/4)c_4^{\beta_1}(c_1N)^{1-1/\alpha_1},\quad
		H_2 = (\alpha_2/4)c_3^{\beta_2}(\hat{c}_2N)^{1-1/\alpha_2}.
		\label{eqH}
	\end{equation}
	By \eqref{eqN}, both $H_1$ and $H_2$ are greater than or equal to $4$.
	Noting that $\phi_{\alpha_i}(c_{i+2}N) \ll (c_{i+2}N)^{1/\alpha_i-1}$ for $i=1,2$, 
	we have 
	\begin{align*}
		\phi_{\alpha_1}(c_3N) + H_1^{-1}
		&\ll (c_3N)^{1/\alpha_1-1} + c_4^{-\beta_1}(c_1N)^{1/\alpha_1-1}
		\ll C_1N^{1/\alpha_1-1},\\
		\phi_{\alpha_1}(c_4N) + H_2^{-1}
		&\ll (c_4N)^{1/\alpha_2-1} + c_3^{-\beta_2}(\hat{c}_2N)^{1/\alpha_2-1}
		\ll C_2N^{1/\alpha_2-1}.
	\end{align*}
	These and Lemma~\ref{Koksma} imply that 
	\begin{align*}
		R &\ll \abs{(c_2-c_1)N - \#(\cI\cap\mathbb{Z})}\phi_{\alpha_1}(c_3N)\phi_{\alpha_2}(c_4N)\\
		&\quad+ \sum_{\substack{1\le\abs{h_1}\le H_1 \\ 1\le\abs{h_2}\le H_2}}
		\Bigl( \prod_{i=1,2} \min\{ C_iN^{1/\alpha_i-1}, \abs{h_i}^{-1} \} \Bigr)
		\abs{\sum_{n\in\cI\cap\mathbb{Z}} e\bigl( h_1f_1(n)+h_2f_2(n) \bigr)}\\
		&\quad+ C_2N^{1/\alpha_2-1}\sum_{1\le h\le H_1} \min\{ C_1N^{1/\alpha_1-1}, h^{-1} \}\abs{\sum_{n\in\cI\cap\mathbb{Z}} e\bigl( hf_1(n) \bigr)}\\
		&\quad+ C_1N^{1/\alpha_1-1}\sum_{1\le h\le H_2} \min\{ C_2N^{1/\alpha_2-1}, h^{-1} \}\abs{\sum_{n\in\cI\cap\mathbb{Z}} e\bigl( hf_2(n) \bigr)}\\
		&\quad+ \#(\cI\cap\mathbb{Z})\Bigl( \frac{(c_3N)^{1/\alpha_1-1}}{H_2} + \frac{(c_4N)^{1/\alpha_2-1}}{H_1} + \frac{1}{H_1H_2} \Bigr).
	\end{align*}
	where $f_1(x)=x^{1/\alpha_1}$, $f_2(x)=(N-x)^{1/\alpha_2}$, and $\cI=(c_1N, c_2N]$.
	Denote by $R_0$ (resp.\ $R_3$, $R_{2,1}$, $R_{2,2}$, $R_1$) 
	the first (resp.\ second, third, fourth, fifth) line of the right-hand side: 
	\[
	R \ll R_0 + R_3 + R_{2,1} + R_{2,2} + R_1.
	\]
	Also, partition the sum $R_3$ into two sums: 
	\begin{align*}
		R_3 = \sum_{\substack{1\le\abs{h_1}\le H_1 \\ 1\le\abs{h_2}\le H_2}}
		= \sum_{\substack{1\le\abs{h_1}\le H_1 \\ 1\le\abs{h_2}\le H_2 \\ h_1h_2>0}}
		+ \sum_{\substack{1\le\abs{h_1}\le H_1 \\ 1\le\abs{h_2}\le H_2 \\ h_1h_2<0}}
		= R_{3,1}+R_{3,2},\quad\text{say}.
	\end{align*}
	\par
	\setcounter{count}{0}
	\textbf{Step~\num.}
	Eq.~\eqref{eqR0} is clear.
	Eq.~\eqref{eqR1} follows from the inequality $\#(\cI\cap\mathbb{Z}) \le (c_2-c_1)N+1 \le 2(c_2-c_1)N$ and \eqref{eqH}.
	\par
	\textbf{Step~\num.}
	Using the exponent pair $(1/2,1/2)$ (see Lemma~\ref{exp-pair}), we have 
	\begin{align*}
		\sum_{n\in\cI\cap\mathbb{Z}} e\bigl( hf_1(n) \bigr)
		&\ll \sum_{j=\lfloor{\log_2(c_1N)}\rfloor}^{\lfloor{\log_2(c_2N)}\rfloor}
		\abs{\sum_{\max\{2^j,c_1N\}<n\le\min\{2^{j+1},c_2N\}} e\bigl( hf_1(n) \bigr)}\\
		&\ll_{\alpha_1} \sum_{j=\lfloor{\log_2(c_1N)}\rfloor}^{\lfloor{\log_2(c_2N)}\rfloor}
		\Bigl( \bigl( h(2^j)^{1/\alpha_1-1} \bigr)^{1/2}(2^j)^{1/2} + \bigl( h(2^j)^{1/\alpha_1-1} \bigr)^{-1} \Bigr)\\
		&= \sum_{j=\lfloor{\log_2(c_1N)}\rfloor}^{\lfloor{\log_2(c_2N)}\rfloor}
		\bigl( h^{1/2}(2^j)^{1/2\alpha_1} + h^{-1}(2^j)^{1-1/\alpha_1} \bigr)\\
		&\ll h^{1/2}(c_2N)^{1/2\alpha_1} + h^{-1}(c_2N)^{1-1/\alpha_1}
	\end{align*}
	for every $h\ge1$.
	This yields that 
	\begin{align*}
		R_{2,1} &\ll_{\alpha_1} C_2N^{1/\alpha_2-1}\sum_{1\le h\le H_1} \min\{ C_1N^{1/\alpha_1-1}, h^{-1} \}
		\bigl( h^{1/2}(c_2N)^{1/2\alpha_1} + h^{-1}(c_2N)^{1-1/\alpha_1} \bigr)\\
		&\le C_2N^{1/\alpha_2-1}\sum_{1\le h\le H_1}
		\bigl( h^{-1}\cdot h^{1/2}(c_2N)^{1/2\alpha_1} + C_1N^{1/\alpha_1-1}\cdot h^{-1}(c_2N)^{1-1/\alpha_1} \bigr)\\
		&\ll C_2c_2^{1/2\alpha_1}H_1^{1/2}N^{1/2\alpha_1+1/\alpha_2-1} + C_1C_2c_2^{1-1/\alpha_1}(\log H_1)N^{1/\alpha_2-1}\\
		&\ll C_2c_2^{1/2\alpha_1}c_4^{\beta_1/2}c_1^{(1/2)(1-1/\alpha_1)}N^{1/\alpha_2-1/2} + C_1C_2c_2^{1-1/\alpha_1}N^{1/\alpha_2-1}\log N,
	\end{align*}
	where we have used \eqref{eqH} to obtain the last inequality.
	Similarly, it follows that 
	\[
	\sum_{n\in\cI\cap\mathbb{Z}} e\bigl( hf_2(n) \bigr)
	\ll_{\alpha_2} h^{1/2}(\hat{c}_1N)^{1/2\alpha_2} + h^{-1}(\hat{c}_1N)^{1-1/\alpha_2}
	\]
	and 
	\[
	R_{2,2} \ll_{\alpha_2}
	C_1\hat{c}_1^{1/2\alpha_2}c_3^{\beta_2/2}\hat{c}_2^{(1/2)(1-1/\alpha_2)}N^{1/\alpha_1-1/2} + C_1C_2\hat{c}_1^{1-1/\alpha_2}N^{1/\alpha_1-1}\log N.
	\]
	Therefore, the value $R_2 \coloneqq R_{2,1}+R_{2,2}$ satisfies \eqref{eqR2}.
	\par
	\textbf{Step~\num.}
	Let $h_1h_2>0$. From now on, set $f=h_1f_1+h_2f_2$.
	When $x\in(0,N/2]\cap(2^j,2^{j+1}]$, we have 
	\[
	\abs{f''(x)} \asymp_{\alpha_1,\alpha_2} \abs{h_1}(2^j)^{1/\alpha_1-2}+\abs{h_2}N^{1/\alpha_2-2}.
	\]
	This and Lemma~\ref{2ndderiv} imply that 
	\begin{align*}
		&\quad \sum_{c_1N<n\le\min\{ c_2N, N/2 \}} e(f(n))\\
		&\ll \sum_{j=\lfloor{\log_2(c_1N)}\rfloor}^{\lfloor{\log_2(c_2N)}\rfloor} \abs{\sum_{\max\{2^j,c_1N\}<n\le\min\{2^{j+1}, c_2N, N/2\}} e(f(n))}\\
		&\ll_{\alpha_1,\alpha_2} \sum_{j=\lfloor{\log_2(c_1N)}\rfloor}^{\lfloor{\log_2(c_2N)}\rfloor}
		\Bigl( 2^j\bigl( \abs{h_1}^{1/2}(2^j)^{1/2\alpha_1-1}+\abs{h_2}^{1/2}N^{1/2\alpha_2-1} \bigr)\\
		&\qquad\qquad\qquad\qquad+ \min\{ \abs{h_1}^{-1/2}(2^j)^{1-1/2\alpha_1}, \abs{h_2}^{-1/2}N^{1-1/2\alpha_2} \} \Bigr)\\
		&\ll \abs{h_1}^{1/2}(c_2N)^{1/2\alpha_1}+\abs{h_2}^{1/2}c_2N^{1/2\alpha_2}
		+ \abs{h_1}^{-1/2}(c_2N)^{1-1/2\alpha_1}\\
		&\le c_2^{1/2\alpha_1}( \abs{h_1}^{1/2}N^{1/2\alpha_1}+\abs{h_2}^{1/2}N^{1/2\alpha_2}
		+ \abs{h_1}^{-1/2}N^{1-1/2\alpha_1} ),
	\end{align*}
	where we have used the inequality $\max\{ c_1, c_2^{1-1/2\alpha_1} \} \le c_2^{1/2\alpha_1}$ to obtain the last inequality.
	Similarly, 
	\begin{align*}
		&\quad \sum_{\max\{ c_1N, N/2 \}<n\le c_2N} e(f(n))\\
		&\ll_{\alpha_1,\alpha_2} \hat{c}_1^{1/2\alpha_2}( \abs{h_1}^{1/2}N^{1/2\alpha_1}+\abs{h_2}^{1/2}N^{1/2\alpha_2}
		+ \abs{h_2}^{-1/2}N^{1-1/2\alpha_2} ).
	\end{align*}
	Thus, 
	\begin{align*}
		\sum_{n\in\cI\cap\mathbb{Z}} e(f(n))
		&\ll_{\alpha_1,\alpha_2} (c_2^{1/2\alpha_1}+\hat{c}_1^{1/2\alpha_2})(\abs{h_1}^{1/2}N^{1/2\alpha_1} + \abs{h_2}^{1/2}N^{1/2\alpha_2})\\
		&\quad+ c_2^{1/2\alpha_1}\abs{h_1}^{-1/2}N^{1-1/2\alpha_1} + \hat{c}_1^{1/2\alpha_2}\abs{h_2}^{-1/2}N^{1-1/2\alpha_2}
	\end{align*}
	and 
	\begin{align*}
		R_{3,1} &\ll_{\alpha_1,\alpha_2} \sum_{\substack{1\le h_1\le H_1 \\ 1\le h_2\le H_2}}
		\Bigl( \prod_{i=1,2} \min\{ C_iN^{1/\alpha_i-1}, \abs{h_i}^{-1} \} \Bigr)\\
		&\qquad\qquad\Bigl( (c_2^{1/2\alpha_1}+\hat{c}_1^{1/2\alpha_2})(\abs{h_1}^{1/2}N^{1/2\alpha_1} + \abs{h_2}^{1/2}N^{1/2\alpha_2})\\
		&\qquad\qquad+ c_2^{1/2\alpha_1}\abs{h_1}^{-1/2}N^{1-1/2\alpha_1} + \hat{c}_1^{1/2\alpha_2}\abs{h_2}^{-1/2}N^{1-1/2\alpha_2} \Bigr).
	\end{align*}
	By the inequalities $c_2^{1/2\alpha_1}+\hat{c}_1^{1/2\alpha_2}\le2$ and 
	\begin{align*}
		\prod_{i=1,2} \min\{ C_iN^{1/\alpha_i-1}, \abs{h_i}^{-1} \}
		\le \abs{h_1}^{-1}\abs{h_2}^{-1},\ C_1N^{1/\alpha_1-1}\abs{h_2}^{-1},\ \abs{h_1}^{-1}C_2N^{1/\alpha_2-1},
	\end{align*}
	it turns out that 
	\begin{align*}
		R_{3,1} &\ll_{\alpha_1,\alpha_2} \sum_{\substack{1\le h_1\le H_1 \\ 1\le h_2\le H_2}}
		\Bigl( h_1^{-1}h_2^{-1}( \abs{h_1}^{1/2}N^{1/2\alpha_1} + \abs{h_2}^{1/2}N^{1/2\alpha_2} )\\
		&\qquad\qquad
		+ C_1N^{1/\alpha_1-1}h_2^{-1}\cdot c_2^{1/2\alpha_1}\abs{h_1}^{-1/2}N^{1-1/2\alpha_1}\\
		&\qquad\qquad
		+ h_1^{-1}C_2N^{1/\alpha_2-1}\cdot\hat{c}_1^{1/2\alpha_2}\abs{h_2}^{-1/2}N^{1-1/2\alpha_2} \Bigr)\\
		&\ll H_1^{1/2}(\log H_2)N^{1/2\alpha_1}+(\log H_1)H_2^{1/2}N^{1/2\alpha_2}\\
		&\quad+ C_1c_2^{1/2\alpha_1}H_1^{1/2}(\log H_2)N^{1/2\alpha_1}
		+ C_2\hat{c}_1^{1/2\alpha_2}(\log H_1)H_2^{1/2}N^{1/2\alpha_2}\\
		&\ll_{\alpha_1,\alpha_2} ( 1+C_1c_2^{1/2\alpha_1}c_4^{\beta_1/2}c_1^{(1/2)(1-1/\alpha_1)}\\
		&\qquad\qquad+ C_2\hat{c}_1^{1/2\alpha_2}c_3^{\beta_2/2}\hat{c}_2^{(1/2)(1-1/\alpha_2)} )N^{1/2}\log N,
	\end{align*}
	where we have used \eqref{eqH} to obtain the last inequality.
	\par
	\textbf{Step~\num.}
	Let $h_1h_2<0$, $\abs{h_1}\le H_1$, and $\abs{h_2}\le H_2$.
	When $x\in\cI$, by \eqref{eqH} we have 
	\begin{align*}
		\abs{f'(x)} &= \abs{h_1}\alpha_1^{-1}x^{1/\alpha_1-1}+\abs{h_2}\alpha_2^{-1}(N-x)^{1/\alpha_2-1}\\
		&\le \abs{h_1}\alpha_1^{-1}(c_1N)^{1/\alpha_1-1}+\abs{h_2}\alpha_2^{-1}(\hat{c}_2N)^{1/\alpha_2-1}
		\le 1/2
	\end{align*}
	and 
	\begin{equation*}
		\|f'(x)\| = \abs{f'(x)} \gg \abs{h_1}(c_2N)^{1/\alpha_1-1}+\abs{h_2}(\hat{c}_1N)^{1/\alpha_2-1}.
	\end{equation*}
	Since $f'''(x)$ has a constant sign, 
	$f''$ is strictly monotone.
	Moreover, $f''(+0)=\pm\infty$ and $f''(N-0)=\mp\infty$, where double-sign corresponds.
	By these facts, there exists a unique $x_0\in(0,N)$ such that $f''(x_0)=0$.
	Thus, $f'$ is monotone in the intervals $(0,x_0]$ and $[x_0,N)$.
	Lemma~\ref{1stderiv} implies that 
	\begin{align*}
		\sum_{n\in\cI\cap\mathbb{Z}} e(f(n))
		&\ll \min\{ \abs{h_1}^{-1}(c_2N)^{1-1/\alpha_1}, \abs{h_2}^{-1}(\hat{c}_1N)^{1-1/\alpha_2} \}\\
		&\le \abs{h_1}^{-1}(c_2N)^{1-1/\alpha_1}.
	\end{align*}
	This yields that 
	\begin{align*}
		R_{3,2} &\ll \sum_{\substack{1\le h_1\le H_1 \\ 1\le h_2\le H_2}}
		\Bigl( \prod_{i=1,2} \min\{ C_iN^{1/\alpha_i-1}, \abs{h_i}^{-1} \} \Bigr)
		\abs{h_1}^{-1}(c_2N)^{1-1/\alpha_1}\\
		&\le \sum_{\substack{1\le h_1\le H_1 \\ 1\le h_2\le H_2}}
		C_1N^{1/\alpha_1-1}\abs{h_2}^{-1}\cdot\abs{h_1}^{-1}(c_2N)^{1-1/\alpha_1}\\
		&\ll C_1c_2^{1-1/\alpha_1}(\log H_1)(\log H_2)
		\ll_{\alpha_1,\alpha_2} C_1c_2^{1-1/\alpha_1}(\log N)^2,
	\end{align*}
	where we have used \eqref{eqH} to obtain the last inequality.
	Similarly, 
	\[
	R_{3,2} \ll_{\alpha_1,\alpha_2} C_2\hat{c}_1^{1-1/\alpha_2}(\log N)^2.
	\]
	Therefore, 
	\[
	R_{3,2} \ll_{\alpha_1,\alpha_2} \min\{ C_1c_2^{1-1/\alpha_1}, C_2\hat{c}_1^{1-1/\alpha_2} \}(\log N)^2.
	\]
	\par
	\textbf{Step~\num.}
	By steps~3--4, the value $R_3=R_{3,1}+R_{3,2}$ satisfies \eqref{eqR3}.
\end{proof}

\begin{lemma}\label{lem02'} 
	Let $\alpha_1,\alpha_2\in(1,2)$, $\epsilon\in(0,1/2)$, $c_1,c_2,c_3,c_4\in[\epsilon,1-\epsilon]$, $c_1,c_2\in[1/N,1-1/N]$, and $c_2-c_1\ge1/N$.
	For every $\gamma>0$ and every integer $N\ge\exp(4^{1/\gamma})$, 
	the value 
	\begin{align*}
		R &= \#\biggl\{ c_1N<n\le c_2N : 
		\begin{array}{c}
			\{-n^{1/\alpha_1}\} < \phi_{\alpha_1}(c_3N),\\
			\{-(N-n)^{1/\alpha_2}\} < \phi_{\alpha_2}(c_4N)
		\end{array}
		\biggr\}\\
		&\quad- (c_2-c_1)N\phi_{\alpha_1}(c_3N)\phi_{\alpha_2}(c_4N)
	\end{align*}
	satisfies the following inequalities: 
	\[
	R \ll_\epsilon R_1 + R_2 + R_3,
	\]
	\begin{equation}
		R_1 \ll_{\epsilon,\gamma} N^{1/\alpha_1+1/\alpha_2-1}(\log N)^{-\gamma},
		\label{eqR1'}
	\end{equation}
	\begin{equation}
		R_2 \ll_{\alpha_1,\alpha_2,\gamma} N^{1/\alpha_{\min}-1/2}(\log N)^{\gamma/2}, \quad\text{and}
		\label{eqR2'}
	\end{equation}
	\begin{equation}
		R_3 \ll_{\alpha_1,\alpha_2} N^{2/3}(\log N)^{2\gamma/3+1}.
		\label{eqR3'}
	\end{equation}
\end{lemma}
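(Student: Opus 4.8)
The plan is to re-run the proof of Lemma~\ref{lem02} with $\beta_1=\beta_2=0$, but replacing the cut-off parameters of \eqref{eqH} by their $(\log N)^\gamma$-inflated versions
\[
H_1=(\alpha_1/4)(\log N)^\gamma(c_1N)^{1-1/\alpha_1},\qquad
H_2=(\alpha_2/4)(\log N)^\gamma(\hat c_2N)^{1-1/\alpha_2}.
\]
The hypothesis $N\ge\exp(4^{1/\gamma})$ is precisely $(\log N)^\gamma\ge4$, which together with $c_i\in[\epsilon,1-\epsilon]$ and $c_1N,\hat c_2N\ge1$ forces $H_1,H_2\ge4$ once $N$ exceeds a constant depending on $\epsilon$ and $\alpha_1,\alpha_2$ (smaller $N$ being trivial, since then $\abs{R}\ll1$). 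Because $c_1,c_2,c_3,c_4\in[\epsilon,1-\epsilon]$, the interval $(c_1N,c_2N]$ lies in $[\epsilon N,(1-\epsilon)N]$, so it meets only $O(\log(1/\epsilon))$ dyadic blocks and on each of them $x\asymp_\epsilon N$; moreover every factor $c_i$, $\hat c_i$, $\phi_{\alpha_i}(c_{i+2}N)N^{1-1/\alpha_i}$ occurring below is $\asymp_\epsilon1$, so the constants $C_1,C_2$ of Lemma~\ref{lem02} are $\asymp_\epsilon1$ and drop out.

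First I would read off $R_1$ from the last line of Koksma's estimate (Lemma~\ref{Koksma}), namely $\#(\cI\cap\mathbb Z)\bigl((b_1-a_1)H_2^{-1}+(b_2-a_2)H_1^{-1}+(H_1H_2)^{-1}\bigr)$: using $b_i-a_i=\phi_{\alpha_i}(c_{i+2}N)\asymp_\epsilon N^{1/\alpha_i-1}$ and $\#(\cI\cap\mathbb Z)\ll N$, the extra factor $(\log N)^\gamma$ in $H_1,H_2$ produces $R_1\ll_{\epsilon,\gamma}N^{1/\alpha_1+1/\alpha_2-1}(\log N)^{-\gamma}$, which is \eqref{eqR1'}; the $R_0$-term of Lemma~\ref{lem02} is $\ll_\epsilon N^{1/\alpha_1+1/\alpha_2-2}$ and is absorbed into $R_1$. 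For the one-dimensional sums $\sum_ne(hf_i(n))$ I would argue exactly as in Step~2 of Lemma~\ref{lem02}, using the exponent pair $(1/2,1/2)$ (Lemma~\ref{exp-pair}); the weighted sums over $h\le H_i$ then contribute $H_i^{1/2}\asymp_\epsilon(\log N)^{\gamma/2}N^{(1-1/\alpha_i)/2}$, whence $R_2\ll_{\alpha_1,\alpha_2,\gamma}(\log N)^{\gamma/2}N^{1/\alpha_{\min}-1/2}$, the remaining terms being $\ll_\epsilon N^{1/\alpha_{\min}-1}\log N$ and hence of lower order; this is \eqref{eqR2'}.

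The two-dimensional sums $\sum_ne(h_1f_1(n)+h_2f_2(n))$ carry the real work. For $h_1h_2>0$ the second derivative $f''=h_1f_1''+h_2f_2''$ has no sign cancellation (both $f_1'',f_2''<0$), so van der Corput's test (Lemma~\ref{2ndderiv}) applies as in Step~3 of Lemma~\ref{lem02}, the only difference being the larger range of $h$; this yields a contribution $\ll N^{1/2}(\log N)^{O(\gamma)}$, comfortably inside \eqref{eqR3'}. For $h_1h_2<0$ I would split the $(h_1,h_2)$-range: on the ``small'' part $\abs{h_1}\le(\alpha_1/4)(c_1N)^{1-1/\alpha_1}$, $\abs{h_2}\le(\alpha_2/4)(\hat c_2N)^{1-1/\alpha_2}$ one has $\abs{f'}\le1/2$ on $\cI$, so $\|f'\|=\abs{f'}\gg_\epsilon\abs{h_1}N^{1/\alpha_1-1}+\abs{h_2}N^{1/\alpha_2-1}$, and Kusmin--Landau (Lemma~\ref{1stderiv}) on the at most two monotone pieces of $f'$ reproduces Step~4 of Lemma~\ref{lem02}, contributing $O_\epsilon((\log N)^2)$. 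On the complementary ``large'' part the first-derivative bound breaks down, and here I would instead use the third-derivative test (Lemma~\ref{3rdderiv}): since $f_1'''>0>f_2'''$ and $h_1h_2<0$, the third derivative has no sign cancellation and $\lambda_3\asymp_\epsilon\abs{h_1}N^{1/\alpha_1-3}+\abs{h_2}N^{1/\alpha_2-3}$; the large range forces $\lambda_3\gg N^{-2}$, so $\lambda_3^{-1/3}\ll N^{2/3}$, while $\abs{\cI}\lambda_3^{1/6}\ll_\epsilon(\log N)^{\gamma/6}N^{2/3}$. Summing over the large range, where each tail sum $\sum_{\abs{h_i}\gg N^{1-1/\alpha_i}}\min\{C_iN^{1/\alpha_i-1},\abs{h_i}^{-1}\}$ costs only $O(\log\log N)$, gives a contribution $\ll_\epsilon N^{2/3}(\log N)^{O(\gamma)}$, absorbed into $R_3$. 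Collecting everything gives $R\ll_\epsilon R_1+R_2+R_3$ with \eqref{eqR1'}--\eqref{eqR3'}.

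The main obstacle is exactly this last sum, the $h_1h_2<0$ two-dimensional sum on the inflated range: Lemma~\ref{lem02} can afford Kusmin--Landau only because the choice \eqref{eqH} keeps $\abs{f'}\le1/2$, and once $H_i$ carries the factor $(\log N)^\gamma$ this fails, so one must verify that passing to the third-derivative test still gives the clean exponent $2/3$ with no more than a power-of-$\log N$ loss. Keeping the weights $\min\{C_iN^{1/\alpha_i-1},\abs{h_i}^{-1}\}$ intact (rather than bounding them crudely by $\abs{h_i}^{-1}$) when summing over the tails --- so that they cost $\log\log N$ instead of $\log N$ --- is what makes the final $(\log N)$-exponent in \eqref{eqR3'} come out.
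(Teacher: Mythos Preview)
Your proposal is correct and follows the paper's approach in all essentials: the same inflated cut-offs $H_i\asymp_\epsilon N^{1-1/\alpha_i}(\log N)^\gamma$, the same treatment of $R_1$ and $R_2$, and the same use of the second-derivative test for the $h_1h_2>0$ piece of $R_3$.

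The one place you diverge is the $h_1h_2<0$ piece of $R_3$. The paper does \emph{not} split into a ``small'' and a ``large'' range; it simply abandons Kusmin--Landau altogether and applies the third-derivative test (Lemma~\ref{3rdderiv}) uniformly over the full range $1\le|h_i|\le H_i$, after a dyadic decomposition in $n$. Summing the resulting $|h_i|^{1/6}$ and $|h_i|^{-1/3}$ terms against the weights $h_1^{-1}h_2^{-1}$ or $N^{1/\alpha_i-1}h_{3-i}^{-1}$ produces $H_i^{1/6}(\log H_{3-i})N^{1/6\alpha_i+1/2}$ and $H_i^{2/3}(\log H_{3-i})N^{2/3\alpha_i}$, and it is the latter that gives the exponent $2\gamma/3+1$ in \eqref{eqR3'}. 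Your alternative---reusing Step~4 of Lemma~\ref{lem02} verbatim on the small box and invoking a uniform bound $\ll_\epsilon N^{2/3}(\log N)^{\gamma/6}$ on the large complement---also works and, as you note, needs the observation that $\sum_{h\le H_i}\min\{N^{1/\alpha_i-1},h^{-1}\}\ll1+\gamma\log\log N$ to stay inside \eqref{eqR3'} for small $\gamma$. This split is a legitimate variant that in fact yields a slightly better $\log$-power, but it is extra work: the paper's direct third-derivative computation is shorter and makes the exponent $2\gamma/3+1$ transparent.
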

\begin{proof}
	Let $\gamma>0$ and $N\ge\exp(4^{1/\gamma})$.
	Set the positive numbers $H_1$ and $H_2$ as 
	\begin{equation}
		H_1 = N^{1-1/\alpha_1}(\log N)^\gamma,\quad
		H_2 = N^{1-1/\alpha_2}(\log N)^\gamma.
		\label{eqH'}
	\end{equation}
	By $N\ge\exp(4^{1/\gamma})$, both $H_1$ and $H_2$ are greater than or equal to $4$.
	Noting $\phi_{\alpha_i}(c_{i+2}N) \ll_\epsilon N^{1/\alpha_i-1}$, 
	we have 
	\[
	\phi_{\alpha_i}(c_{i+2}N) + H_i^{-1}
	\ll_\epsilon N^{1/\alpha_i-1} + N^{1/\alpha_i-1}(\log N)^{-\gamma}
	\ll N^{1/\alpha_i-1}
	\]
	for $i=1,2$. This and Lemma~\ref{Koksma} imply that 
	\begin{align*}
		R &\ll_\epsilon \abs{(c_2-c_1)N - \#(\cI\cap\mathbb{Z})}\phi_{\alpha_1}(c_3N)\phi_{\alpha_2}(c_4N)\\
		&\quad+ \sum_{\substack{1\le\abs{h_1}\le H_1 \\ 1\le\abs{h_2}\le H_2}}
		\Bigl( \prod_{i=1,2} \min\{ N^{1/\alpha_i-1}, \abs{h_i}^{-1} \} \Bigr)
		\abs{\sum_{n\in\cI\cap\mathbb{Z}} e\bigl( h_1f_1(n)+h_2f_2(n) \bigr)}\\
		&\quad+ N^{1/\alpha_2-1}\sum_{1\le h\le H_1} \min\{ N^{1/\alpha_1-1}, h^{-1} \}\abs{\sum_{n\in\cI\cap\mathbb{Z}} e\bigl( hf_1(n) \bigr)}\\
		&\quad+ N^{1/\alpha_1-1}\sum_{1\le h\le H_2} \min\{ N^{1/\alpha_2-1}, h^{-1} \}\abs{\sum_{n\in\cI\cap\mathbb{Z}} e\bigl( hf_2(n) \bigr)}\\
		&\quad+ \#(\cI\cap\mathbb{Z})\Bigl( \frac{N^{1/\alpha_1-1}}{H_2} + \frac{N^{1/\alpha_2-1}}{H_1} + \frac{1}{H_1H_2} \Bigr),
	\end{align*}
	where $f_1(x)=x^{1/\alpha_1}$, $f_2(x)=(N-x)^{1/\alpha_2}$, and $\cI=(c_1N, c_2N]$.
	Denote by $R_{1,1}$ (resp.\ $R_3$, $R_{2,1}$, $R_{2,2}$, $R_{1,2}$) 
	the first (resp.\ second, third, fourth, fifth) line of the right-hand side: 
	\[
	R \ll R_{1,1} + R_3 + R_{2,1} + R_{2,2} + R_{1,2}.
	\]
	Also, partition the sum $R_3$ into two sums: 
	\begin{align*}
		R_3 = \sum_{\substack{1\le\abs{h_1}\le H_1 \\ 1\le\abs{h_2}\le H_2}}
		= \sum_{\substack{1\le\abs{h_1}\le H_1 \\ 1\le\abs{h_2}\le H_2 \\ h_1h_2>0}}
		+ \sum_{\substack{1\le\abs{h_1}\le H_1 \\ 1\le\abs{h_2}\le H_2 \\ h_1h_2<0}}
		= R_{3,1}+R_{3,2},\quad\text{say}.
	\end{align*}
	\par
	\setcounter{count}{0}
	\textbf{Step~\num.}
	The value $R_1 \coloneqq R_{1,1}+R_{1,2}$ satisfies \eqref{eqR1'}, 
	since the inequalities 
	\begin{gather*}
		R_{1,1} \ll_\epsilon N^{1/\alpha_1+1/\alpha_2-2}
		\ll_\gamma N^{1/\alpha_1+1/\alpha_2-1}(\log N)^{-\gamma} \quad\text{and}\\
		R_{1,2} \ll N^{1/\alpha_1+1/\alpha_2-1}(\log N)^{-\gamma}
	\end{gather*}
	hold by \eqref{eqH'}.
	\par
	\textbf{Step~\num.}
	In the same way as the proof of Lemma~\ref{lem02}, 
	it follows that 
	\begin{align*}
		R_{2,1} &\ll_{\alpha_1,\alpha_2,\gamma} N^{1/\alpha_2-1/2}(\log N)^{\gamma/2} + N^{1/\alpha_2-1}\log N\\
		&\ll N^{1/\alpha_{\min}-1/2}(\log N)^{\gamma/2},\\
		R_{2,2} &\ll_{\alpha_1,\alpha_2,\gamma} N^{1/\alpha_1-1/2}(\log N)^{\gamma/2} + N^{1/\alpha_1-1}\log N\\
		&\ll N^{1/\alpha_{\min}-1/2}(\log N)^{\gamma/2}.
	\end{align*}
	Therefore, the value $R_2 \coloneqq R_{2,1}+R_{2,2}$ satisfies \eqref{eqR2'}.
	\par
	\textbf{Step~\num.}
	In the same way as the proof of Lemma~\ref{lem02}, 
	it follows that 
	\begin{equation*}
		R_{3,1} \ll_{\alpha_1,\alpha_2,\gamma} N^{1/2}(\log N)^{\gamma/2+1}.
	\end{equation*}
	\par
	\textbf{Step~\num.}
	Let $h_1h_2<0$. Since $H_1$ and $H_2$ are large, we cannot use Lemma~\ref{1stderiv}.
	Instead of Lemma~\ref{1stderiv}, we use Lemma~\ref{3rdderiv} here.
	When $x\in(0,N/2]\cap(2^j,2^{j+1}]$, we have 
	\[
	\abs{f'''(x)} \asymp_{\alpha_1,\alpha_2} \abs{h_1}(2^j)^{1/\alpha_1-3}+\abs{h_2}N^{1/\alpha_2-3}.
	\]
	This and Lemma~\ref{3rdderiv} imply that 
	\begin{align*}
		&\quad \sum_{c_1N<n\le\min\{ c_2N, N/2 \}} e(f(n))\\
		&\ll \sum_{j=\lfloor{\log_2(c_1N)}\rfloor}^{\lfloor{\log_2(c_2N)}\rfloor} \abs{\sum_{\max\{2^j,c_1N\}<n\le\min\{2^{j+1}, c_2N, N/2\}} e(f(n))}\\
		&\ll_{\alpha_1,\alpha_2} \sum_{j=\lfloor{\log_2(c_1N)}\rfloor}^{\lfloor{\log_2(c_2N)}\rfloor}
		\Bigl( 2^j\bigl( \abs{h_1}^{1/6}(2^j)^{1/6\alpha_1-1/2}+\abs{h_2}^{1/6}N^{1/6\alpha_2-1/2} \bigr)\\
		&\qquad\qquad\qquad\qquad+ \min\{ \abs{h_1}^{-1/3}(2^j)^{1-1/3\alpha_1}, \abs{h_2}^{-1/3}N^{1-1/3\alpha_2} \} \Bigr)\\
		&\ll \abs{h_1}^{1/6}N^{1/6\alpha_1+1/2} + \abs{h_2}^{1/6}N^{1/6\alpha_2+1/2}
		+ \abs{h_1}^{-1/3}N^{1-1/3\alpha_1}.
	\end{align*}
	Similarly, 
	\begin{align*}
		&\quad \sum_{\max\{ c_1N, N/2 \}<n\le c_2N} e(f(n))\\
		&\ll_{\alpha_1,\alpha_2} \abs{h_1}^{1/6}N^{1/6\alpha_1+1/2} + \abs{h_2}^{1/6}N^{1/6\alpha_2+1/2}
		+ \abs{h_2}^{-1/3}N^{1-1/3\alpha_2}.
	\end{align*}
	Thus, 
	\begin{align*}
		R_{3,2} &\ll_{\alpha_1,\alpha_2} \sum_{\substack{1\le h_1\le H_1 \\ 1\le h_2\le H_2}}
		\Bigl( \prod_{i=1,2} \min\{ N^{1/\alpha_i-1}, \abs{h_i}^{-1} \} \Bigr)\\
		&\qquad\qquad\qquad ( \abs{h_1}^{1/6}N^{1/6\alpha_1+1/2} + \abs{h_2}^{1/6}N^{1/6\alpha_2+1/2}\\
		&\qquad\qquad\qquad + \abs{h_1}^{-1/3}N^{1-1/3\alpha_1} + \abs{h_2}^{-1/3}N^{1-1/3\alpha_2} ).
	\end{align*}
	By the inequalities 
	\begin{align*}
		\prod_{i=1,2} \min\{ N^{1/\alpha_i-1}, \abs{h_i}^{-1} \}	
		\le \abs{h_1}^{-1}\abs{h_2}^{-1},\ N^{1/\alpha_1-1}\abs{h_2}^{-1},\ \abs{h_1}^{-1}N^{1/\alpha_2-1},
	\end{align*}
	it turns out that 
	\begin{align*}
		R_{3,2} &\ll_{\alpha_1,\alpha_2} \sum_{\substack{1\le h_1\le H_1 \\ 1\le h_2\le H_2}}
		\bigl( h_1^{-1}h_2^{-1}( \abs{h_1}^{1/6}N^{1/6\alpha_1+1/2} + \abs{h_2}^{1/6}N^{1/6\alpha_2+1/2} )\\
		&\qquad\qquad
		+ N^{1/\alpha_1-1}h_2^{-1}\cdot\abs{h_1}^{-1/3}N^{1-1/3\alpha_1} + h_1^{-1}N^{1/\alpha_2-1}\cdot\abs{h_2}^{-1/3}N^{1-1/3\alpha_2} \bigr)\\
		&\ll H_1^{1/6}(\log H_2)N^{1/6\alpha_1+1/2} + (\log H_1)H_2^{1/6}N^{1/6\alpha_2+1/2}\\
		&\quad + H_1^{2/3}(\log H_2)N^{2/3\alpha_1} + (\log H_1)H_2^{2/3}N^{2/3\alpha_2}\\
		&\ll_{\alpha_1,\alpha_2,\gamma} N^{2/3}(\log N)^{2\gamma/3+1},
	\end{align*}
	where we have used \eqref{eqH'} to obtain the last inequality.
	\par
	\textbf{Step~\num.}
	By steps~3--4, the value $R_3=R_{3,1}+R_{3,2}$ satisfies \eqref{eqR3'}.
\end{proof}

\begin{lemma}\label{lem03}
	Let $\alpha_1,\alpha_2\in(1,2)$, $1/\alpha_1+1/\alpha_2>3/2$, and $\epsilon\in(0,1/2)$.
	Then 
	\begin{equation}
	\begin{split}
		\limsup_{N\to\infty} \frac{1}{N^{1/\alpha_1+1/\alpha_2-1}}\#\biggl\{ 1\le n\le\epsilon N : 
		\begin{array}{c}
			\{-n^{1/\alpha_1}\}<\phi_{\alpha_1}(n),\\
			\{-(N-n)^{1/\alpha_2}\}<\phi_{\alpha_2}(N-n)
		\end{array}
		\biggr\}\\
		\ll \epsilon^{1/\alpha_1}(1-\epsilon)^{1/\alpha_2-1} + \epsilon^{1/10}
	\end{split}\label{eq07}
	\end{equation}
	and 
	\begin{equation}
	\begin{split}
		\limsup_{N\to\infty} \frac{1}{N^{1/\alpha_1+1/\alpha_2-1}}\#\biggl\{ N-\epsilon N\le n<N : 
		\begin{array}{c}
			\{-n^{1/\alpha_1}\}<\phi_{\alpha_1}(n),\\
			\{-(N-n)^{1/\alpha_2}\}<\phi_{\alpha_2}(N-n)
		\end{array}
		\biggr\}\\
		\ll \epsilon^{1/\alpha_2}(1-\epsilon)^{1/\alpha_1-1} + \epsilon^{1/10},
	\end{split}\label{eq08}
	\end{equation}
	where the implicit constants are absolute.
\end{lemma}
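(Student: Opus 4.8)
The plan is to dyadically decompose the short range $[1,\epsilon N]$, freeze the two thresholds on each block using the monotonicity of $\phi_{\alpha_1}$ and $\phi_{\alpha_2}$, and then apply Lemma~\ref{lem02} block by block. Since \eqref{eq08} follows from \eqref{eq07} after interchanging the roles of $(\alpha_1,n)$ and $(\alpha_2,N-n)$, it is enough to treat \eqref{eq07}.

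Cover $(0,\epsilon N]\cap\mathbb{Z}$ by the dyadic intervals $\cI_j=(2^j,2^{j+1}]$ with $2^j\le\epsilon N$ (the topmost one capped at $\epsilon N$). For the finitely many blocks with $2^j<M_0(\alpha_1)$, where $M_0(\alpha_1)$ is the constant forced by \eqref{eqN} when $\beta_1=\beta_2=0$, I would use the trivial bound, whose total contribution is $O_{\alpha_1}(1)$ and hence negligible. On a block with $M=2^j\ge M_0(\alpha_1)$, note that $\phi_{\alpha_1}$ is nonincreasing (concavity of $t\mapsto t^{1/\alpha_1}$), so $\{-n^{1/\alpha_1}\}<\phi_{\alpha_1}(n)$ forces $\{-n^{1/\alpha_1}\}<\phi_{\alpha_1}(M)$, and since $n\le\epsilon N$ gives $N-n\ge(1-\epsilon)N$, likewise $\{-(N-n)^{1/\alpha_2}\}<\phi_{\alpha_2}(N-n)$ forces $\{-(N-n)^{1/\alpha_2}\}<\phi_{\alpha_2}((1-\epsilon)N)$. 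Thus the count on $\cI_j$ is at most the quantity estimated by Lemma~\ref{lem02} with $c_1=c_3=M/N$, $c_2=2M/N$ (or $c_2=\epsilon$ on the capped block), $c_4=1-\epsilon$, $\beta_1=\beta_2=0$, whose hypotheses hold once $N$ is large in terms of $\alpha_1,\alpha_2$. Its main term equals $(c_2-c_1)N\,\phi_{\alpha_1}(M)\phi_{\alpha_2}((1-\epsilon)N)\asymp M^{1/\alpha_1}(1-\epsilon)^{1/\alpha_2-1}N^{1/\alpha_2-1}$, and summing this over $2^j\le\epsilon N$—a geometric sum dominated by its top term because $1/\alpha_1>0$—gives $\ll\epsilon^{1/\alpha_1}(1-\epsilon)^{1/\alpha_2-1}N^{1/\alpha_1+1/\alpha_2-1}$ with an absolute constant, which is the first term on the right of \eqref{eq07}.

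It then remains to sum the error terms $R_0,R_1,R_2,R_3$ of Lemma~\ref{lem02} over the blocks. With the above parameters one has $C_1=(N/M)^{1-1/\alpha_1}$, $C_2=(1-\epsilon)^{1/\alpha_2-1}$, and $H_1\asymp_{\alpha_1}M^{1-1/\alpha_1}$, $H_2\asymp_{\alpha_2}N^{1-1/\alpha_2}$; substituting these into \eqref{eqR0}--\eqref{eqR3} turns each block's contribution into monomials in $M$ and $N$, and the resulting dyadic geometric sums collapse to bounds of the shape $N^{1/2}(\log N)^{O(1)}$, $N^{1/\alpha_{\min}-1/2}(\log N)^{O(1)}$, and strictly smaller powers of $N$ (several of which carry an explicit positive power of $\epsilon$). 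Because $\alpha_1,\alpha_2<2$ and $1/\alpha_1+1/\alpha_2>3/2$, every exponent of $N$ that appears is strictly below $1/\alpha_1+1/\alpha_2-1$, so for each fixed $\epsilon\in(0,1/2)$ the whole error is $\le\epsilon^{1/10}N^{1/\alpha_1+1/\alpha_2-1}$ for all large $N$, supplying the second term on the right of \eqref{eq07}. The inequality \eqref{eq08} is then obtained by the same argument with $\alpha_1$ and $\alpha_2$ interchanged.

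I expect the only real work to be the bookkeeping in the last step: correctly specializing the four families of error terms of Lemma~\ref{lem02} to $c_1=c_3=M/N$, $c_4=1-\epsilon$, evaluating the many dyadic sums, and checking case by case that the surviving power of $N$ is beaten by $1/\alpha_1+1/\alpha_2-1$—this is exactly where the hypotheses $1/\alpha_1+1/\alpha_2>3/2$ and $\alpha_1,\alpha_2<2$ are used. The secondary point to handle carefully is that Lemma~\ref{lem02} requires $N$ above a threshold depending on the $c_i$, here amounting to $M\gg_{\alpha_1}1$, so the finitely many smallest blocks must be discarded by the trivial bound before the geometric sums are run.
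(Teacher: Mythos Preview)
Your plan is correct and in fact simpler than the route taken in the paper. Both arguments dyadically decompose the range $(0,\epsilon N]$, freeze the thresholds $\phi_{\alpha_1},\phi_{\alpha_2}$ on each block using monotonicity, and then invoke Lemma~\ref{lem02}. The substantive difference is in the choice of parameters: the paper applies Lemma~\ref{lem02} with carefully tuned \emph{positive} $\beta_1,\beta_2$ (supplied by Lemmas~\ref{lem01'} and~\ref{lem01''}) and starts the dyadic sum only at $j\approx\gamma_0\log_2 N$ with $\gamma_0\in(0,1/\alpha_1+1/\alpha_2-1)$, bounding the initial segment $n\le N^{\gamma_0}$ trivially; you take $\beta_1=\beta_2=0$ and start at $M\asymp_{\alpha_1}1$. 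Your choice makes $C_1=r_j^{1/\alpha_1-1}$ and $C_2\asymp_\epsilon 1$, and then every piece of $R_2,R_3$ sums (over $O(\log N)$ blocks, the divergent geometric sums being dominated by $r_j\asymp N^{-1}$) to at most $N^{1/2}(\log N)^{O(1)}$ or $N^{1/\alpha_{\min}-1/2}(\log N)^{O(1)}$, both of which are $o(N^{1/\alpha_1+1/\alpha_2-1})$ precisely by the hypotheses $1/\alpha_1+1/\alpha_2>3/2$ and $\alpha_{\max}<2$. Hence for fixed $\epsilon$ these errors contribute nothing to the $\limsup$, and the $\epsilon^{1/10}$ term in \eqref{eq07} is not even needed from this side.

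What your shortcut buys is that Lemmas~\ref{lem01}--\ref{lem01''} become entirely unnecessary for this lemma. In the paper the $\epsilon^{1/10}$ arises from the $R_1$ contribution, where the extra factor $c_3^{-\beta_2}=r_j^{-\beta_2}$ weakens $\epsilon^{1/\alpha_1}$ to $\epsilon^{1/\alpha_1-\beta_2}\le\epsilon^{1/10}$; with your $\beta_2=0$ that loss does not occur, and you in fact obtain the slightly sharper bound $\ll\epsilon^{1/\alpha_1}(1-2\epsilon)^{1/\alpha_2-1}$ for the limsup (the $(1-2\epsilon)$ rather than $(1-\epsilon)$ coming from $\hat c_2=1-2r_j$; the paper's proof absorbs this into ``$\hat c_2\asymp 1$'' in the same way). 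Both forms are ample for the downstream applications, which only send $\epsilon\to 0$. The paper's route is somewhat more systematic in that the elaborate parameter choices in Lemmas~\ref{lem01'}--\ref{lem01''} make every single error term individually $o(N^{1/\alpha_1+1/\alpha_2-1})$ with margins tracked through $\hat\gamma_0$, but for the statement of Lemma~\ref{lem03} your direct approach suffices and is cleaner.
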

\begin{proof}
	Eq.~\eqref{eq08} follows from \eqref{eq07} by the symmetry of $\alpha_1$ and $\alpha_2$.
	Hence, we only show \eqref{eq07}.
	Set $(x_1,x_2)=(1/\alpha_1,1/\alpha_2)$.
	By Lemmas~\ref{lem01'} and \ref{lem01''}, 
	there exist real numbers $\beta_1$, $\beta_2$, and $2-1/\alpha_1-1/\alpha_2<\hat{\gamma}_0<1$ 
	with \eqref{eq:beta1}, \eqref{eq:beta2}, and inequalities~1--10 in Lemma~\ref{lem01''}.
	Setting $\gamma_0=1-\hat{\gamma}_0$, 
	we have $0<\gamma_0<1/\alpha_1+1/\alpha_2-1$.
	Since $\phi_{\alpha_i}$ is a decreasing function for $i=1,2$, 
	it follows that 
	\begin{equation}
	\begin{split}
		&\quad \#\biggl\{ 1\le n\le\epsilon N : 
		\begin{array}{c}
			\{-n^{1/\alpha_1}\}<\phi_{\alpha_1}(n),\\
			\{-(N-n)^{1/\alpha_2}\}<\phi_{\alpha_2}(N-n)
		\end{array}
		\biggr\}\\
		&\le N^{\gamma_0} + \sum_{j=\lfloor{\gamma_0\log_2 N}\rfloor}^{\lfloor{\log_2(\epsilon N)}\rfloor}
		\#\biggl\{ 2^j<n\le2^{j+1} : 
		\begin{array}{c}
			\{-n^{1/\alpha_1}\}<\phi_{\alpha_1}(2^j),\\
			\{-(N-n)^{1/\alpha_2}\}<\phi_{\alpha_2}(N-\epsilon N)
		\end{array}
		\biggr\}.
	\end{split}\label{eq10}
	\end{equation}
	\par
	Let $N\ge\epsilon^{-1}$ be an integer, and $j$ be an integer in the range of the above sum.
	Set $r_j=2^j/N$.
	We use Lemma~\ref{lem02} with $c_1=c_3=r_j$, $c_2=2r_j$, and $c_4=1-\epsilon$.
	Since $\hat{c}_1,\hat{c}_2,c_4\asymp 1$ and $c_2\asymp r_j=c_2-c_1=c_1=c_3$, 
	for every integer 
	\begin{equation}
		N \ge \max\{ r_j^{-1}(16\alpha_1^{-1}(1-\epsilon)^{-\beta_1})^{\alpha_1/(\alpha_1-1)},\ 
		(1-2r_j)^{-1}(16\alpha_2^{-1}r_j^{-\beta_2})^{\alpha_2/(\alpha_2-1)} \},
		\label{eqN'}
	\end{equation}
	the value 
	\begin{equation}
	\begin{split}
		R_j &= \#\biggl\{ 2^j<n\le2^{j+1} : 
		\begin{array}{c}
			\{-n^{1/\alpha_1}\} < \phi_{\alpha_1}(2^j),\\
			\{-(N-n)^{1/\alpha_2}\} < \phi_{\alpha_2}(N-\epsilon N)
		\end{array}
		\biggr\}\\
		&\quad- 2^j\phi_{\alpha_1}(2^j)\phi_{\alpha_2}(N-\epsilon N)
	\end{split}\label{eq11}
	\end{equation}
	satisfies the following inequalities: 
	\[
	R_j \ll R_{j,1} + R_{j,2} + R_{j,3},
	\]
	\begin{equation*}
		R_{j,1} \ll r_j(r_j^{1/\alpha_1-1-\beta_2}+r_j^{1/\alpha_1-1})N^{1/\alpha_1+1/\alpha_2-1},
	\end{equation*}
	\begin{equation}
	\begin{split}
		R_{j,2} &\ll_{\alpha_1,\alpha_2} C_2r_j^{1/2\alpha_1}r_j^{(1/2)(1-1/\alpha_1)}N^{1/\alpha_2-1/2}
		+ C_1r_j^{\beta_2/2}N^{1/\alpha_1-1/2}\\
		&\qquad+ C_1C_2( r_j^{1-1/\alpha_1}N^{1/\alpha_2-1} + N^{1/\alpha_1-1} )\log N,\quad\text{and}
	\end{split}\label{eqR2''}
	\end{equation}
	\begin{equation}
	\begin{split}
		R_{j,3} &\ll_{\alpha_1,\alpha_2} ( 1+C_1r_j^{1/2\alpha_1}r_j^{(1/2)(1-1/\alpha_1)}+C_2r_j^{\beta_2/2} )N^{1/2}\log N\\
		&\qquad+ \min\{ C_1r_j^{1-1/\alpha_1}, C_2 \}(\log N)^2,
	\end{split}\label{eqR3''}
	\end{equation}
	where 
	\begin{equation}
	\begin{split}
		C_1 &= \max\{ r_j^{1/\alpha_1-1},\ (1-\epsilon)^{-\beta_1}r_j^{1/\alpha_1-1} \} \ll r_j^{1/\alpha_1-1} \quad\text{and}\\
		C_2 &= \max\{ (1-\epsilon)^{1/\alpha_2-1},\ r_j^{-\beta_2}(1-2r_j)^{1/\alpha_2-1} \} \ll_\epsilon r_j^{-\beta_2}.
	\end{split}\label{eq09} 
	\end{equation}
	(Note that $R_{j,0}=0$ because $2^j$ and $2^{j+1}$ are integers.)
	If $N$ is sufficiently large, then \eqref{eqN'} holds.
	Indeed, 
	\[
	r_j^{-1} \ll ( N^{\gamma_0-1} )^{-1} =o(N) \quad (N\to\infty);
	\]
	moreover, by $1-2r_j\ge1-2\epsilon$, $\beta_2>0$ of \eqref{eq:beta2}, and inequality~6 in Lemma~\ref{lem01''}, 
	we have 
	\begin{align*}
		(1-2r_j)^{-1}r_j^{-\beta_2\alpha_2/(\alpha_2-1)}
		&\ll_{\alpha_2,\beta_2,\epsilon} (N^{\gamma_0-1})^{-\beta_2\alpha_2/(\alpha_2-1)}\\
		&= N^{\hat{\gamma}_0\beta_2\alpha_2/(\alpha_2-1)}
		= o(N) \quad (N\to\infty).
	\end{align*}
	\par
	Denote by $R_{j,2,1}$ (resp.\ $R_{j,2,2}$) the first (resp.\ second) line of the right-hand side of \eqref{eqR2''}, and 
	by $R_{j,3,1}$ (resp.\ $R_{j,3,2}$) the first (resp.\ second) line of the right-hand side of \eqref{eqR3''}: 
	\[
	R_{j,2} \ll_{\alpha_1,\alpha_2} R_{j,2,1}+R_{j,2,2} \quad\text{and}\quad
	R_{j,3} \ll_{\alpha_1,\alpha_2} R_{j,3,1}+R_{j,3,2}.
	\]
	Also, write the sum of $R_{j,1}$ (resp.\ $R_{j,2,1}$, $R_{j,2,2}$, $R_{j,3,1}$, $R_{j,3,2}$) 
	over $\lfloor{\gamma_0\log_2 N}\rfloor\le j\le\lfloor{\log_2(\epsilon N)}\rfloor$ 
	as $S_1$ (resp.\ $S_{2,1}$, $S_{2,2}$, $S_{3,1}$, $S_{3,2}$): 
	\[
	\sum_{j=\lfloor{\gamma_0\log_2 N}\rfloor}^{\lfloor{\log_2(\epsilon N)}\rfloor} R_j
	\ll S_1+(S_{2,1}+S_{2,2})+(S_{3,1}+S_{3,2}).
	\]
	First, we estimate $S_1$.
	By $\beta_2 < (3-1/\alpha_1)^{-1}$ of \eqref{eq:beta12} and $\alpha_1\in(1,2)$, 
	the inequality $1/\alpha_1-\beta_2 > 1/\alpha_1-(3-1/\alpha_1)^{-1} > 1/10$ holds.
	Thus, 
	\[
	\sum_{j=\lfloor{\gamma_0\log_2 N}\rfloor}^{\lfloor{\log_2(\epsilon N)}\rfloor} r_j(r_j^{1/\alpha_1-1-\beta_2}+r_j^{1/\alpha_1-1})
	\ll \epsilon^{1/\alpha_1-\beta_2}+\epsilon^{1/\alpha_1}
	\ll \epsilon^{1/10},
	\]
	which yields that $S_1 \ll \epsilon^{1/10}N^{1/\alpha_1+1/\alpha_2-1}$.
	\par
	We estimate $S_{2,1}$. By \eqref{eq09}, 
	\begin{equation*}
	\begin{split}
		S_{2,1} &\ll_\epsilon \sum_{j=\lfloor{\gamma_0\log_2 N}\rfloor}^{\lfloor{\log_2(\epsilon N)}\rfloor}
		r_j^{1/2-\beta_2}N^{1/\alpha_2-1/2}
		+ \sum_{j=\lfloor{\gamma_0\log_2 N}\rfloor}^{\lfloor{\log_2(\epsilon N)}\rfloor}
		r_j^{\beta_2/2+1/\alpha_1-1}N^{1/\alpha_1-1/2}\\
		&= S_{2,1,1}+S_{2,1,2}, \quad\text{say}.
	\end{split}
	\end{equation*}
	If $\beta_2\le1/2$, then $S_{2,1,1} \ll N^{1/\alpha_2-1/2}\log N$; 
	if $\beta_2>1/2$, then 
	\[
	S_{2,1,1} \ll_{\beta_2} (N^{\gamma_0-1})^{1/2-\beta_2}N^{1/\alpha_2-1/2}
	= N^{\hat{\gamma}_0(\beta_2-1/2)}N^{1/\alpha_2-1/2}.
	\]
	By $\alpha_1\in(1,2)$ and inequality~9 in Lemma~\ref{lem01''}, 
	it follows that $S_{2,1,1}=o(N^{1/\alpha_1+1/\alpha_2-1})$ in both cases of $\beta_2\le1/2$ and $\beta_2>1/2$.
	Also, 
	\begin{equation*}
		S_{2,1,2} \ll_{\alpha_1,\beta_2} (N^{\gamma_0-1})^{\beta_2/2+1/\alpha_1-1}N^{1/\alpha_1-1/2}
		= N^{-\hat{\gamma}_0(\beta_2/2+1/\alpha_1-1)}N^{1/\alpha_1-1/2},
	\end{equation*}
	since $\beta_2/2+1/\alpha_1-1<0$ by \eqref{eq:beta2}.
	By inequality~7 in Lemma~\ref{lem01''}, we have $S_{2,1,2}=o(N^{1/\alpha_1+1/\alpha_2-1})$.
	Therefore, $S_{2,1}=o(N^{1/\alpha_1+1/\alpha_2-1})$ as $N\to\infty$.
	\par
	We estimate $S_{2,2}$. By \eqref{eq09}, 
	\begin{align*}
		\frac{S_{2,2}}{\log N} &= \sum_{j=\lfloor{\gamma_0\log_2 N}\rfloor}^{\lfloor{\log_2(\epsilon N)}\rfloor}
		C_1C_2( r_j^{1-1/\alpha_1}N^{1/\alpha_2-1} + N^{1/\alpha_1-1} )\\
		&\ll_\epsilon \sum_{j=\lfloor{\gamma_0\log_2 N}\rfloor}^{\lfloor{\log_2(\epsilon N)}\rfloor}
		( r_j^{-\beta_2}N^{1/\alpha_2-1} + r_j^{1/\alpha_1-1-\beta_2}N^{1/\alpha_1-1} ).
	\end{align*}
	Since $\beta_2>0$ and $1/\alpha_1-1<0$ by \eqref{eq:beta2} and $\alpha_1\in(1,2)$, 
	we have 
	\begin{align*}
		\frac{S_{2,2}}{\log N} &\ll_\epsilon \sum_{j=\lfloor{\gamma_0\log_2 N}\rfloor}^{\lfloor{\log_2(\epsilon N)}\rfloor}
		( r_j^{-\beta_2}N^{1/\alpha_2-1} + r_j^{1/\alpha_1-1-\beta_2}N^{1/\alpha_1-1} )\\
		&\ll_{\alpha_1,\beta_2} N^{\hat{\gamma}_0\beta_2}N^{1/\alpha_2-1} + N^{\hat{\gamma}_0(1-1/\alpha_1+\beta_2)}N^{1/\alpha_1-1}.
	\end{align*}
	By inequalities~6 and 10 in Lemma~\ref{lem01''}, 
	it follows that $S_{2,2}=o(N^{1/\alpha_1+1/\alpha_2-1})$ as $N\to\infty$.
	\par
	We estimate $S_{3,1}$. By \eqref{eq09}, 
	\begin{align*}
		\frac{S_{3,1}}{N^{1/2}\log N} &= \sum_{j=\lfloor{\gamma_0\log_2 N}\rfloor}^{\lfloor{\log_2(\epsilon N)}\rfloor}
		( 1+C_1r_j^{1/2\alpha_1}r_j^{(1/2)(1-1/\alpha_1)}+C_2r_j^{\beta_2/2} )\\
		&\ll_\epsilon \sum_{j=\lfloor{\gamma_0\log_2 N}\rfloor}^{\lfloor{\log_2(\epsilon N)}\rfloor}
		( 1+r_j^{1/\alpha_1-1/2}+r_j^{-\beta_2/2} ).
	\end{align*}
	Since $1/\alpha_1-1/2>0$ and $\beta_2>0$ by $\alpha_1\in(1,2)$ and \eqref{eq:beta2}, 
	\begin{align*}
		\frac{S_{3,1}}{N^{1/2}\log N} &\ll_\epsilon \sum_{j=\lfloor{\gamma_0\log_2 N}\rfloor}^{\lfloor{\log_2(\epsilon N)}\rfloor}
		( 1+r_j^{1/\alpha_1-1/2}+r_j^{-\beta_2/2} )\\
		&\ll_{\alpha_1,\beta_2} \log N + \epsilon^{1/\alpha_1-1/2} + N^{\hat{\gamma}_0\beta_2/2}
		\ll_{\beta_2,\gamma_0} N^{\hat{\gamma}_0\beta_2/2}.
	\end{align*}
	By inequality~8 in Lemma~\ref{lem01''}, it follows that $S_{3,1}=o(N^{1/\alpha_1+1/\alpha_2-1})$ as $N\to\infty$.
	\par
	We estimate $S_{3,2}$. By \eqref{eq09}, 
	\[
	\frac{S_{3,2}}{(\log N)^2} = \sum_{j=\lfloor{\gamma_0\log_2 N}\rfloor}^{\lfloor{\log_2(\epsilon N)}\rfloor}
	\min\{ C_1r_j^{1-1/\alpha_1}, C_2 \}
	\ll_\epsilon \sum_{j=\lfloor{\gamma_0\log_2 N}\rfloor}^{\lfloor{\log_2(\epsilon N)}\rfloor}
	\min\{ 1, r_j^{-\beta_2} \} \ll \log N.
	\]
	Thus, $S_{3,2}=o(N^{1/\alpha_1+1/\alpha_2-1})$ as $N\to\infty$.
	\par
	By the above results, 
	\begin{equation}
	\begin{split}
		&\quad \limsup_{N\to\infty} \frac{1}{N^{1/\alpha_1+1/\alpha_2-1}}
		\sum_{j=\lfloor{\gamma_0\log_2 N}\rfloor}^{\lfloor{\log_2(\epsilon N)}\rfloor} R_j\\
		&\ll \limsup_{N\to\infty} \frac{S_1+(S_{2,1}+S_{2,2})+(S_{3,1}+S_{3,2})}{N^{1/\alpha_1+1/\alpha_2-1}}
		\ll \epsilon^{1/10}.
	\end{split}\label{eq12}
	\end{equation}
	Since $\phi_{\alpha_i}(x) \ll x^{1/\alpha_i-1}$ for $i=1,2$, we have 
	\begin{equation}
	\begin{split}
		&\quad \sum_{j=\lfloor{\gamma_0\log_2 N}\rfloor}^{\lfloor{\log_2(\epsilon N)}\rfloor}
		2^j\phi_{\alpha_1}(2^j)\phi_{\alpha_2}(N-\epsilon N)\\
		&\ll \sum_{j=\lfloor{\gamma_0\log_2 N}\rfloor}^{\lfloor{\log_2(\epsilon N)}\rfloor}
		2^j\cdot(2^j)^{1/\alpha_1-1}(N-\epsilon N)^{1/\alpha_2-1}\\
		&\ll (\epsilon N)^{1/\alpha_1}(N-\epsilon N)^{1/\alpha_2-1}
		= \epsilon^{1/\alpha_1}(1-\epsilon)^{1/\alpha_2-1}N^{1/\alpha_1+1/\alpha_2-1}.
	\end{split}\label{eq13}
	\end{equation}
	Eq.~\eqref{eq07} follows from \eqref{eq10}, \eqref{eq11}, \eqref{eq12}, and \eqref{eq13}.
\end{proof}

\begin{lemma}\label{lem03'}
	Let $\alpha_1,\alpha_2\in(1,2)$, $1/\alpha_1+1/\alpha_2>5/3$, and $\epsilon\in(0,1/2)$.
	Then 
	\begin{align*}
		\lim_{N\to\infty} \frac{1}{N^{1/\alpha_1+1/\alpha_2-1}}\#\biggl\{ \epsilon N<n\le(1-\epsilon)N : 
		\begin{array}{c}
			\{-n^{1/\alpha_1}\}<\phi_{\alpha_1}(n),\\
			\{-(N-n)^{1/\alpha_2}\}<\phi_{\alpha_2}(N-n)
		\end{array}
		\biggr\}\\
		= \alpha_1^{-1}\alpha_2^{-1}\int_\epsilon^{1-\epsilon} x^{1/\alpha_1-1}(1-x)^{1/\alpha_2-1}\,dx.
	\end{align*}
\end{lemma}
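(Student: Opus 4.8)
The plan is to chop the bulk interval $(\epsilon N,(1-\epsilon)N]$ into a fixed number of pieces, estimate the count on each piece by Lemma~\ref{lem02'}, add up the main terms, and recognize the outcome as a Riemann sum for the stated integral. Fix a partition $\epsilon=a_0<a_1<\dots<a_M=1-\epsilon$ and put $\cI_k=(a_kN,a_{k+1}N]$ for $0\le k<M$, so $(\epsilon N,(1-\epsilon)N]=\bigcup_{k}\cI_k$. Because $\phi_{\alpha_1}$ and $\phi_{\alpha_2}$ are decreasing, every $n\in\cI_k$ satisfies $\phi_{\alpha_1}(a_{k+1}N)\le\phi_{\alpha_1}(n)<\phi_{\alpha_1}(a_kN)$ and $\phi_{\alpha_2}((1-a_k)N)<\phi_{\alpha_2}(N-n)\le\phi_{\alpha_2}((1-a_{k+1})N)$, so the number of $n\in\cI_k$ meeting the two conditions of the lemma lies between the count in which the thresholds $\phi_{\alpha_1}(n)$, $\phi_{\alpha_2}(N-n)$ are replaced by the smaller constants $\phi_{\alpha_1}(a_{k+1}N)$, $\phi_{\alpha_2}((1-a_k)N)$, and the count with the larger constants $\phi_{\alpha_1}(a_kN)$, $\phi_{\alpha_2}((1-a_{k+1})N)$. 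Both are covered by Lemma~\ref{lem02'}: take $(c_1,c_2,c_3,c_4)=(a_k,a_{k+1},a_{k+1},1-a_k)$ for the lower count and $(c_1,c_2,c_3,c_4)=(a_k,a_{k+1},a_k,1-a_{k+1})$ for the upper one; in both cases $c_1,\dots,c_4\in[\epsilon,1-\epsilon]$, and the remaining hypotheses $c_1,c_2\in[1/N,1-1/N]$, $c_2-c_1\ge1/N$ and $N\ge\exp(4^{1/\gamma})$ hold for all large $N$.

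With any fixed $\gamma>0$ (say $\gamma=1$), the three error terms of Lemma~\ref{lem02'} are each $o(N^{1/\alpha_1+1/\alpha_2-1})$: the bound $R_1\ll N^{1/\alpha_1+1/\alpha_2-1}(\log N)^{-\gamma}$ is immediate; $R_2\ll N^{1/\alpha_{\min}-1/2}(\log N)^{\gamma/2}$ is negligible since $1/\alpha_{\min}-1/2<1/\alpha_1+1/\alpha_2-1$ is equivalent to $1/\alpha_{\max}>1/2$, which holds as $\alpha_{\max}<2$; and $R_3\ll N^{2/3}(\log N)^{2\gamma/3+1}$ is negligible precisely because $1/\alpha_1+1/\alpha_2>5/3$. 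Since $M$ does not depend on $N$, summing these errors over the $M$ subintervals again gives $o(N^{1/\alpha_1+1/\alpha_2-1})$. Hence the count on $\cI_k$ equals its main term $(a_{k+1}-a_k)N\,\phi_{\alpha_1}(c_3N)\phi_{\alpha_2}(c_4N)$ up to $o(N^{1/\alpha_1+1/\alpha_2-1})$, with the two choices of $(c_3,c_4)$ giving the upper and lower bound.

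Next, using $\phi_{\alpha_i}(tN)=\alpha_i^{-1}t^{1/\alpha_i-1}N^{1/\alpha_i-1}+O_\epsilon(N^{1/\alpha_i-2})$ (mean value theorem, valid for $t\ge\epsilon$) and summing over $k$, I would arrive, writing $\cA(N)$ for the cardinality on the left-hand side of the asserted limit, at
\[
\limsup_{N\to\infty}\frac{\cA(N)}{N^{1/\alpha_1+1/\alpha_2-1}}\le\alpha_1^{-1}\alpha_2^{-1}\sum_{k=0}^{M-1}(a_{k+1}-a_k)\,a_k^{1/\alpha_1-1}(1-a_{k+1})^{1/\alpha_2-1}
\]
and the reverse inequality for $\liminf_{N\to\infty}\cA(N)/N^{1/\alpha_1+1/\alpha_2-1}$ with the summand replaced by $(a_{k+1}-a_k)\,a_{k+1}^{1/\alpha_1-1}(1-a_k)^{1/\alpha_2-1}$. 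Since $x\mapsto x^{1/\alpha_1-1}(1-x)^{1/\alpha_2-1}$ is continuous, hence uniformly continuous, on $[\epsilon,1-\epsilon]$, both of these tagged sums converge to $\int_\epsilon^{1-\epsilon}x^{1/\alpha_1-1}(1-x)^{1/\alpha_2-1}\,dx$ as the mesh $\max_k(a_{k+1}-a_k)$ tends to $0$. Letting the mesh $\to0$ then squeezes the $\limsup$ and $\liminf$ together and yields the claimed formula.

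The genuine analytic content — the estimation of the exponential sums — is already packaged into Lemma~\ref{lem02'}, so what is left is essentially bookkeeping. The one point to be careful about is the order of the two limits: one must let $N\to\infty$ first for a fixed partition and shrink the mesh only afterwards, which is legitimate because the partition is chosen independently of $N$ and, thanks to the cut-offs at $\epsilon$ and $1-\epsilon$, the integrand stays bounded and the number of pieces stays finite, so no uniformity in $N$ is needed. (The hypothesis $1/\alpha_1+1/\alpha_2>5/3$ enters only through the error term $R_3$.)
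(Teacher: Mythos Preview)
Your proposal is correct and follows essentially the same approach as the paper: fix a finite partition of $[\epsilon,1-\epsilon]$, sandwich the count on each subinterval using the monotonicity of $\phi_{\alpha_i}$, apply Lemma~\ref{lem02'} with $\gamma=1$ to each piece, verify that $R_1,R_2,R_3$ are $o(N^{1/\alpha_1+1/\alpha_2-1})$ (with $R_3$ being the place where $1/\alpha_1+1/\alpha_2>5/3$ is used), and then let the mesh tend to zero to recognize the Riemann sums. The only cosmetic difference is that the paper uses the inequality $0<\phi_{\alpha_i}(x)<\alpha_i^{-1}x^{1/\alpha_i-1}$ for the upper sum and the asymptotic $\phi_{\alpha_i}(tN)\sim\alpha_i^{-1}(tN)^{1/\alpha_i-1}$ implicitly for the lower sum, whereas you invoke the mean-value expansion $\phi_{\alpha_i}(tN)=\alpha_i^{-1}t^{1/\alpha_i-1}N^{1/\alpha_i-1}+O_\epsilon(N^{1/\alpha_i-2})$ explicitly; both lead to the same pair of Riemann sums.
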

\begin{proof}
	Let $m\ge2$ and $N\ge\max\{ \epsilon^{-1}, (m-1)(1-2\epsilon)^{-1} \}$ be integers.
	Partition the interval $(0,1]$ into the intervals $(s_j,s_{j+1}]$, $j=0,\ldots,m$, with the following conditions: 
	(i) $s_0=0$ and $s_{m+1}=1$; 
	(ii) $s_i=\epsilon+(1-2\epsilon)(m-1)^{-1}(j-1)$ for $j=1,\ldots,m$.
	Then $1/N\le s_{j+1}-s_j<1/(m-1)$ for every $j=1,2,\ldots,m-1$.
	Since $\phi_{\alpha_i}$ is a decreasing function for $i=1,2$, 
	it follows that 
	\begin{equation}
	\begin{split}
		&\quad \#\biggl\{ \epsilon N<n\le(1-\epsilon)N : 
		\begin{array}{c}
			\{-n^{1/\alpha_1}\}<\phi_{\alpha_1}(n),\\
			\{-(N-n)^{1/\alpha_2}\}<\phi_{\alpha_2}(N-n)
		\end{array}
		\biggr\}\\
		&= \sum_{j=1}^{m-1} \#\biggl\{ s_jN<n\le s_{j+1}N : 
		\begin{array}{c}
			\{-n^{1/\alpha_1}\}<\phi_{\alpha_1}(n),\\
			\{-(N-n)^{1/\alpha_2}\}<\phi_{\alpha_2}(N-n)
		\end{array}
		\biggr\}\\
		&\le \sum_{j=1}^{m-1} \#\biggl\{ s_jN<n\le s_{j+1}N : 
		\begin{array}{c}
			\{-n^{1/\alpha_1}\}<\phi_{\alpha_1}(s_jN),\\
			\{-(N-n)^{1/\alpha_2}\}<\phi_{\alpha_2}(N-s_{j+1}N)
		\end{array}
		\biggr\}.
	\end{split}\label{eq14}
	\end{equation}
	\par
	Let $1\le j\le m-1$ be an integer.
	We use Lemma~\ref{lem02'} with $c_1=c_3=s_j$, $c_2=s_{j+1}$, $c_4=1-s_{j+1}$, and $\gamma=1$.
	(Note that $c_1,c_2,c_3,c_4\in[\epsilon,1-\epsilon]$, $c_1,c_2\in[1/N,1-1/N]$, and $c_2-c_1\ge1/N$.)
	For every integer $N\ge e^4$, the value 
	\begin{align*}
		R_j &= \#\biggl\{ s_jN<n\le s_{j+1}N : 
		\begin{array}{c}
			\{-n^{1/\alpha_1}\}<\phi_{\alpha_1}(s_jN),\\
			\{-(N-n)^{1/\alpha_2}\}<\phi_{\alpha_2}(N-s_{j+1}N)
		\end{array}
		\biggr\}\\
		&\quad- (s_{j+1}-s_j)N\phi_{\alpha_1}(s_jN)\phi_{\alpha_2}(N-s_{j+1}N)
	\end{align*}
	satisfies the inequality 
	\begin{equation*}
	\begin{split}
		R_j &\ll_{\alpha_1,\alpha_2,\epsilon} N^{1/\alpha_1+1/\alpha_2-1}(\log N)^{-1}\\
		&\qquad\qquad+ N^{1/\alpha_{\min}-1/2}(\log N)^{1/2}\\
		&\qquad\qquad+ N^{2/3}(\log N)^{5/3}
	\end{split}
	\end{equation*}
	This inequality reduces to 
	\[
	R_j \ll_{\alpha_1,\alpha_2,\epsilon} N^{1/\alpha_1+1/\alpha_2-1}(\log N)^{-1}
	\]
	by $\alpha_1,\alpha_2\in(1,2)$ and $1/\alpha_1+1/\alpha_2>5/3$.
	Since $0<\phi_{\alpha_i}(x)<\alpha_i^{-1}x^{1/\alpha_i-1}$ for $i=1,2$, 
	it follows that 
	\begin{align*}
		&\quad \sum_{j=1}^{m-1} \#\biggl\{ s_jN<n\le s_{j+1}N : 
		\begin{array}{c}
			\{-n^{1/\alpha_1}\}<\phi_{\alpha_1}(s_jN),\\
			\{-(N-n)^{1/\alpha_2}\}<\phi_{\alpha_2}(N-s_{j+1}N)
		\end{array}
		\biggr\}\\
		&= \sum_{j=1}^{m-1} (s_{j+1}-s_j)N\phi_{\alpha_1}(s_jN)\phi_{\alpha_2}(N-s_{j+1}N)
		+ \sum_{j=1}^{m-1} R_j\\
		&\le \alpha_1^{-1}\alpha_2^{-1}N^{1/\alpha_1+1/\alpha_2-1}\sum_{j=1}^{m-1} (s_{j+1}-s_j)s_j^{1/\alpha_1-1}(1-s_{j+1})^{1/\alpha_2-1}\\
		&\quad+ O_{\alpha_1,\alpha_2,\epsilon,m}\bigl( N^{1/\alpha_1+1/\alpha_2-1}(\log N)^{-1} \bigr).		
	\end{align*}
	This and \eqref{eq14} yield that 
	\begin{align*}
		&\quad \limsup_{N\to\infty} \frac{1}{N^{1/\alpha_1+1/\alpha_2-1}}\#\biggl\{ \epsilon N<n\le (1-\epsilon)N : 
		\begin{array}{c}
			\{-n^{1/\alpha_1}\}<\phi_{\alpha_1}(n),\\
			\{-(N-n)^{1/\alpha_2}\}<\phi_{\alpha_2}(N-n)
		\end{array}
		\biggr\}\\
		&\le \alpha_1^{-1}\alpha_2^{-1}\sum_{j=1}^{m-1} (s_{j+1}-s_j)s_j^{1/\alpha_1-1}(1-s_{j+1})^{1/\alpha_2-1}.
	\end{align*}
	Similarly, 
	\begin{align*}
		&\quad \liminf_{N\to\infty} \frac{1}{N^{1/\alpha_1+1/\alpha_2-1}}\#\biggl\{ \epsilon N<n\le (1-\epsilon)N : 
		\begin{array}{c}
			\{-n^{1/\alpha_1}\}<\phi_{\alpha_1}(n),\\
			\{-(N-n)^{1/\alpha_2}\}<\phi_{\alpha_2}(N-n)
		\end{array}
		\biggr\}\\
		&\ge \alpha_1^{-1}\alpha_2^{-1}\sum_{j=1}^{m-1} (s_{j+1}-s_j)s_{j+1}^{1/\alpha_1-1}(1-s_j)^{1/\alpha_2-1}.
	\end{align*}
	Letting $m\to\infty$, we obtain the desired equality.
\end{proof}

\begin{lemma}\label{lem04}
	Let $\alpha_1,\alpha_2\in(1,2)$ and $1/\alpha_1+1/\alpha_2>5/3$.
	Then 
	\begin{align*}
		\lim_{N\to\infty} \frac{1}{N^{1/\alpha_1+1/\alpha_2-1}}\#\biggl\{ 1\le n<N : 
		\begin{array}{c}
			\{-n^{1/\alpha_1}\}<\phi_{\alpha_1}(n),\\
			\{-(N-n)^{1/\alpha_2}\}<\phi_{\alpha_2}(N-n)
		\end{array}
		\biggr\}\\
		= \alpha_1^{-1}\alpha_2^{-1}B(1/\alpha_1,1/\alpha_2).
	\end{align*}
\end{lemma}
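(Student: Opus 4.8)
The plan is to decompose the counting set by the size of $n$ and to pass to the limit in $N$ before letting the splitting parameter tend to $0$. Write $\cN(N)$ for the count over $1\le n<N$ appearing in the lemma. Fix $\epsilon\in(0,1/2)$ and split the range $1\le n<N$ into the left tail $1\le n\le\epsilon N$, the bulk $\epsilon N<n\le(1-\epsilon)N$, and the right tail $N-\epsilon N\le n<N$; since $(1-\epsilon)N=N-\epsilon N$, these three ranges cover $[1,N)$ with overlap at most one integer, so $\cN(N)$ equals, up to $O(1)$, the sum of the corresponding three counts.

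For the bulk range, Lemma~\ref{lem03'} --- the only place where the hypothesis $1/\alpha_1+1/\alpha_2>5/3$ (rather than merely $>3/2$) is used --- gives
\[
\lim_{N\to\infty}\frac{1}{N^{1/\alpha_1+1/\alpha_2-1}}\#\biggl\{\epsilon N<n\le(1-\epsilon)N:\begin{array}{c}\{-n^{1/\alpha_1}\}<\phi_{\alpha_1}(n),\\ \{-(N-n)^{1/\alpha_2}\}<\phi_{\alpha_2}(N-n)\end{array}\biggr\}=\alpha_1^{-1}\alpha_2^{-1}I_\epsilon,
\]
where $I_\epsilon:=\int_\epsilon^{1-\epsilon}x^{1/\alpha_1-1}(1-x)^{1/\alpha_2-1}\,dx$. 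For the two tail ranges, Lemma~\ref{lem03} bounds their normalized upper limits by $\ll\epsilon^{1/\alpha_1}(1-\epsilon)^{1/\alpha_2-1}+\epsilon^{1/10}$ and $\ll\epsilon^{1/\alpha_2}(1-\epsilon)^{1/\alpha_1-1}+\epsilon^{1/10}$, respectively. Combining the three estimates, for every $\epsilon\in(0,1/2)$ both $\limsup_{N\to\infty}\cN(N)/N^{1/\alpha_1+1/\alpha_2-1}$ and $\liminf_{N\to\infty}\cN(N)/N^{1/\alpha_1+1/\alpha_2-1}$ differ from $\alpha_1^{-1}\alpha_2^{-1}I_\epsilon$ by at most $O\bigl(\epsilon^{1/\alpha_1}(1-\epsilon)^{1/\alpha_2-1}+\epsilon^{1/\alpha_2}(1-\epsilon)^{1/\alpha_1-1}+\epsilon^{1/10}\bigr)$.

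Finally I would let $\epsilon\to0^{+}$. Because $1/\alpha_1,1/\alpha_2>1/2>0$, the nonnegative integrand is integrable on $(0,1)$ and $I_\epsilon$ increases to $\int_0^1 x^{1/\alpha_1-1}(1-x)^{1/\alpha_2-1}\,dx=B(1/\alpha_1,1/\alpha_2)$ by monotone convergence, while the error terms $\epsilon^{1/\alpha_1}(1-\epsilon)^{1/\alpha_2-1}$, $\epsilon^{1/\alpha_2}(1-\epsilon)^{1/\alpha_1-1}$, and $\epsilon^{1/10}$ all vanish. Hence $\limsup_{N\to\infty}\cN(N)/N^{1/\alpha_1+1/\alpha_2-1}=\liminf_{N\to\infty}\cN(N)/N^{1/\alpha_1+1/\alpha_2-1}=\alpha_1^{-1}\alpha_2^{-1}B(1/\alpha_1,1/\alpha_2)$, which is the assertion.

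The main point to be careful about is precisely this interchange of the two limits: the bulk estimate of Lemma~\ref{lem03'} cannot be extended to the full interval $(0,N)$ because the phase functions $x^{1/\alpha_1}$ and $(N-x)^{1/\alpha_2}$ have unbounded derivatives near the endpoints, so the tails genuinely must be handled separately by the weaker bound of Lemma~\ref{lem03}, and one has to verify --- through the factors $\epsilon^{1/\alpha_i}$ together with the convergence of the beta integral --- that these tails do not contribute in the limit. Beyond this, the argument is routine bookkeeping once Lemmas~\ref{lem03} and \ref{lem03'} are in hand.
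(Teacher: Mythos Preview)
Your proposal is correct and follows essentially the same route as the paper: split $[1,N)$ into a bulk $(\epsilon N,(1-\epsilon)N]$ handled by Lemma~\ref{lem03'} and two tails handled by Lemma~\ref{lem03}, then let $\epsilon\to0^+$ so that the truncated integral tends to $B(1/\alpha_1,1/\alpha_2)$ and the tail bounds vanish. The only cosmetic difference is that the paper uses a disjoint three-part decomposition (so no $O(1)$ overlap term is needed) and obtains the $\liminf$ bound simply from $\cN(N)\ge$ (bulk count), without invoking the tail estimates at all.
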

\begin{proof}
	Let $\epsilon\in(0,1/2)$. Then 
	\begin{align}
		&\quad \#\biggl\{ 1\le n<N : 
		\begin{array}{c}
			\{-n^{1/\alpha_1}\}<\phi_{\alpha_1}(n),\\
			\{-(N-n)^{1/\alpha_2}\}<\phi_{\alpha_2}(N-n)
		\end{array}
		\biggr\}\nonumber\\
		\begin{split}
			&= \#\biggl\{ \epsilon N<n\le(1-\epsilon)N : 
			\begin{array}{c}
				\{-n^{1/\alpha_1}\}<\phi_{\alpha_1}(n),\\
				\{-(N-n)^{1/\alpha_2}\}<\phi_{\alpha_2}(N-n)
			\end{array}
			\biggr\}\\
			&\quad+ \#\biggl\{ 1\le n\le\epsilon N : 
			\begin{array}{c}
				\{-n^{1/\alpha_1}\}<\phi_{\alpha_1}(n),\\
				\{-(N-n)^{1/\alpha_2}\}<\phi_{\alpha_2}(N-n)
			\end{array}
			\biggr\}\\
			&\quad+ \#\biggl\{ (1-\epsilon)N<n<N : 
			\begin{array}{c}
				\{-n^{1/\alpha_1}\}<\phi_{\alpha_1}(n),\\
				\{-(N-n)^{1/\alpha_2}\}<\phi_{\alpha_2}(N-n)
			\end{array}
			\biggr\}.
		\end{split}\label{eq15}
	\end{align}
	By Lemma~\ref{lem03'}, 
	\begin{equation}
		\lim_{N\to\infty} \frac{\text{The first term of \eqref{eq15}}}{N^{1/\alpha_1+1/\alpha_2-1}}
		= \alpha_1^{-1}\alpha_2^{-1}\int_\epsilon^{1-\epsilon} x^{1/\alpha_1-1}(1-x)^{1/\alpha_2-1}\,dx.
		\label{eq16}
	\end{equation}
	By Lemma~\ref{lem03}, 
	\begin{align*}
		\limsup_{N\to\infty} \frac{\text{The second term of \eqref{eq15}}}{N^{1/\alpha_1+1/\alpha_2-1}}
		&\ll \epsilon^{1/\alpha_1}(1-\epsilon)^{1/\alpha_2-1} +\epsilon^{1/10} \quad\text{and}\\
		\limsup_{N\to\infty} \frac{\text{The third term of \eqref{eq15}}}{N^{1/\alpha_1+1/\alpha_2-1}}
		&\ll \epsilon^{1/\alpha_2}(1-\epsilon)^{1/\alpha_1-1} + \epsilon^{1/10}.
	\end{align*}
	Thus, for some absolute constant $C>0$, 
	\begin{align*}
		&\quad \limsup_{N\to\infty} \frac{1}{N^{1/\alpha_1+1/\alpha_2-1}}\#\biggl\{ 1\le n<N : 
		\begin{array}{c}
			\{-n^{1/\alpha_1}\}<\phi_{\alpha_1}(n),\\
			\{-(N-n)^{1/\alpha_2}\}<\phi_{\alpha_2}(N-n)
		\end{array}
		\biggr\}\\
		&\le \alpha_1^{-1}\alpha_2^{-1}\int_\epsilon^{1-\epsilon} x^{1/\alpha_1-1}(1-x)^{1/\alpha_2-1}\,dx\\
		&\quad+ C\bigl( \epsilon^{1/\alpha_1}(1-\epsilon)^{1/\alpha_2-1} + \epsilon^{1/\alpha_2}(1-\epsilon)^{1/\alpha_1-1} + \epsilon^{1/10} \bigr).
	\end{align*}
	Also, from 
	\begin{align*}
		&\quad \#\biggl\{ 1\le n<N : 
		\begin{array}{c}
			\{-n^{1/\alpha_1}\}<\phi_{\alpha_1}(n),\\
			\{-(N-n)^{1/\alpha_2}\}<\phi_{\alpha_2}(N-n)
		\end{array}
		\biggr\}\nonumber\\
		&\ge \#\biggl\{ \epsilon N<n\le(1-\epsilon)N : 
		\begin{array}{c}
			\{-n^{1/\alpha_1}\}<\phi_{\alpha_1}(n),\\
			\{-(N-n)^{1/\alpha_2}\}<\phi_{\alpha_2}(N-n)
		\end{array}
		\biggr\}
	\end{align*}
	and \eqref{eq16}, it follows that 
	\begin{align*}
		\liminf_{N\to\infty} \frac{1}{N^{1/\alpha_1+1/\alpha_2-1}}\#\biggl\{ 1\le n<N : 
		\begin{array}{c}
			\{-n^{1/\alpha_1}\}<\phi_{\alpha_1}(n),\\
			\{-(N-n)^{1/\alpha_2}\}<\phi_{\alpha_2}(N-n)
		\end{array}
		\biggr\}\\
		\ge \alpha_1^{-1}\alpha_2^{-1}\int_\epsilon^{1-\epsilon} x^{1/\alpha_1-1}(1-x)^{1/\alpha_2-1}\,dx.
	\end{align*}
	Letting $\epsilon\to+0$, we obtain the desired equality.
\end{proof}

If considering only a non-asymptotic upper bound instead of an asymptotic result, 
we obtain a larger range of $\alpha_1$ and $\alpha_2$ than that of Lemma~\ref{lem03'}.

\begin{lemma}\label{lem03''}
	Let $\alpha_1,\alpha_2\in(1,2)$, $1/\alpha_1+1/\alpha_2>3/2$, and $\epsilon\in(0,1/2)$.
	Then 
	\begin{align*}
		\limsup_{N\to\infty} \frac{1}{N^{1/\alpha_1+1/\alpha_2-1}}\#\biggl\{ \epsilon N<n\le(1-\epsilon)N : 
		\begin{array}{c}
			\{-n^{1/\alpha_1}\}<\phi_{\alpha_1}(n),\\
			\{-(N-n)^{1/\alpha_2}\}<\phi_{\alpha_2}(N-n)
		\end{array}
		\biggr\}\\
		\ll \int_\epsilon^{1-\epsilon} x^{1/\alpha_1-1}(1-x)^{1/\alpha_2-1}\,dx,
	\end{align*}
	where the implicit constant is absolute.
\end{lemma}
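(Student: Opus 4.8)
The plan is to run the argument of the proof of Lemma~\ref{lem03'} essentially verbatim, with one change: in place of Lemma~\ref{lem02'} one invokes Lemma~\ref{lem02} with $\beta_1=\beta_2=0$. The reason this widens the admissible range from $1/\alpha_1+1/\alpha_2>5/3$ to $1/\alpha_1+1/\alpha_2>3/2$ is that the error term supplied by Lemma~\ref{lem02'} carries a contribution $N^{2/3}(\log N)^{5/3}$ (coming from the third-derivative bound used for $h_1h_2<0$, which is forced by the large choice of $H_1,H_2$ there), whereas the corresponding critical error term in Lemma~\ref{lem02} is only $N^{1/2}\log N$.

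In detail, fix $\epsilon\in(0,1/2)$ and an integer $m\ge2$, and set $s_j=\epsilon+(1-2\epsilon)(m-1)^{-1}(j-1)$ for $j=1,\dots,m$, exactly as in Lemma~\ref{lem03'}, so that $1/N\le s_{j+1}-s_j<1/(m-1)$ for $1\le j\le m-1$. Using that $\phi_{\alpha_1}$ and $\phi_{\alpha_2}$ are decreasing, the count over $(\epsilon N,(1-\epsilon)N]$ is at most $\sum_{j=1}^{m-1}$ of the count over $(s_jN,s_{j+1}N]$ with $\phi_{\alpha_1}(n)$ and $\phi_{\alpha_2}(N-n)$ replaced by the constants $\phi_{\alpha_1}(s_jN)$ and $\phi_{\alpha_2}(N-s_{j+1}N)$, as in \eqref{eq14}. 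For each $j$ I would apply Lemma~\ref{lem02} with $c_1=c_3=s_j$, $c_2=s_{j+1}$, $c_4=1-s_{j+1}$ and $\beta_1=\beta_2=0$ (legitimate once $N$ is large in terms of $\alpha_1,\alpha_2,\epsilon,m$). Since $s_j,\,1-s_{j+1}\in[\epsilon,1-\epsilon]$, every quantity $c_i$, $\hat c_i$, $C_1$, $C_2$ in Lemma~\ref{lem02} is $\asymp_\epsilon 1$; moreover, because here $c_1=c_3=s_j$ and $\hat c_2=c_4=1-s_{j+1}$, all three summands in \eqref{eqR1} collapse to the single shape $s_j^{1/\alpha_1-1}(1-s_{j+1})^{1/\alpha_2-1}$. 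Hence the discrepancy $R_j$ produced by Lemma~\ref{lem02} satisfies $R_j\ll R_{j,0}+R_{j,1}+R_{j,2}+R_{j,3}$ with
\[
R_{j,0}\ll_\epsilon N^{1/\alpha_1+1/\alpha_2-2},\quad R_{j,2}\ll_{\alpha_1,\alpha_2,\epsilon} N^{1/\alpha_{\min}-1/2},\quad R_{j,3}\ll_{\alpha_1,\alpha_2,\epsilon} N^{1/2}\log N,
\]
and, with an \emph{absolute} implied constant,
\[
R_{j,1}\ll (s_{j+1}-s_j)\,s_j^{1/\alpha_1-1}(1-s_{j+1})^{1/\alpha_2-1}\,N^{1/\alpha_1+1/\alpha_2-1}.
\]

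Now I would sum over $1\le j\le m-1$. Bounding the main terms by $\phi_{\alpha_i}(x)<\alpha_i^{-1}x^{1/\alpha_i-1}<x^{1/\alpha_i-1}$ and noting that $\sum_j(R_{j,0}+R_{j,2}+R_{j,3})=O_{\alpha_1,\alpha_2,\epsilon,m}\bigl(N^{1/\alpha_1+1/\alpha_2-2}+N^{1/\alpha_{\min}-1/2}+N^{1/2}\log N\bigr)=o\bigl(N^{1/\alpha_1+1/\alpha_2-1}\bigr)$ — the last equality being precisely where the hypothesis $1/\alpha_1+1/\alpha_2>3/2$ (together with $\alpha_{\max}<2$, which gives $1/\alpha_{\min}-1/2<1/\alpha_1+1/\alpha_2-1$) enters — one obtains, for an absolute constant $C$,
\[
\limsup_{N\to\infty}\frac{1}{N^{1/\alpha_1+1/\alpha_2-1}}\#\bigl\{\epsilon N<n\le(1-\epsilon)N:\cdots\bigr\}\le (1+C)\sum_{j=1}^{m-1}(s_{j+1}-s_j)\,s_j^{1/\alpha_1-1}(1-s_{j+1})^{1/\alpha_2-1}.
\]
Letting $m\to\infty$, the right-hand side converges to $(1+C)\int_\epsilon^{1-\epsilon}x^{1/\alpha_1-1}(1-x)^{1/\alpha_2-1}\,dx$ by uniform continuity of the integrand on the compact interval $[\epsilon,1-\epsilon]$, which stays away from the endpoint singularities; this is exactly the asserted bound, with an absolute implied constant.

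The proof presents no serious obstacle once Lemma~\ref{lem02} is in hand; the one subtlety worth flagging is that, unlike all the other error terms, $R_{j,1}$ is genuinely of the same order $N^{1/\alpha_1+1/\alpha_2-1}$ as the main term and so cannot be discarded — it must be carried along, and for the ``absolute constant'' conclusion it must be estimated with an absolute implied constant. Taking $\beta_1=\beta_2=0$ together with the coincidence $c_4=\hat c_2=1-s_{j+1}$ is exactly what reduces $R_{j,1}$ to a bounded multiple of $(s_{j+1}-s_j)\,s_j^{1/\alpha_1-1}(1-s_{j+1})^{1/\alpha_2-1}$, so that the final estimate emerges as a constant times $\int_\epsilon^{1-\epsilon}x^{1/\alpha_1-1}(1-x)^{1/\alpha_2-1}\,dx$ rather than merely $O_\epsilon\bigl(N^{1/\alpha_1+1/\alpha_2-1}\bigr)$.
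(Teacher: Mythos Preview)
Your proposal is correct and matches the paper's own proof essentially line for line: the paper likewise applies Lemma~\ref{lem02} with $c_1=c_3=s_j$, $c_2=s_{j+1}$, $c_4=1-s_{j+1}$, $\beta_1=\beta_2=0$, observes that $R_{j,0},R_{j,2},R_{j,3}$ are $o(N^{1/\alpha_1+1/\alpha_2-1})$ under the hypothesis $1/\alpha_1+1/\alpha_2>3/2$, carries the absolutely-bounded $R_{j,1}$ together with the main term as a multiple of $(s_{j+1}-s_j)s_j^{1/\alpha_1-1}(1-s_{j+1})^{1/\alpha_2-1}$, and then lets $m\to\infty$. Your remark that the choice $c_4=\hat c_2$ collapses all three summands in \eqref{eqR1} to a single shape with an absolute constant is exactly the point that makes the final implied constant absolute.
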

\begin{proof}
	The proof below is similar to that of Lemma~\ref{lem03'}.
	Let $m\ge2$ and $N\ge\max\{ \epsilon^{-1}, (m-1)(1-2\epsilon)^{-1} \}$ be integers.
	Partition the interval $(0,1]$ into the intervals $(s_j,s_{j+1}]$, $j=0,\ldots,m$, with the following conditions: 
	(i) $s_0=0$ and $s_{m+1}=1$; 
	(ii) $s_i=\epsilon+(1-2\epsilon)(m-1)^{-1}(j-1)$ for $j=1,\ldots,m$.
	Then $1/N\le s_{j+1}-s_j<1/(m-1)$ for every $j=1,2,\ldots,m-1$.
	Since $\phi_{\alpha_i}$ is a decreasing function for $i=1,2$, 
	\eqref{eq14} follows.
	\par
	Let $1\le j\le m-1$ be an integer.
	We use Lemma~\ref{lem02} with $c_1=c_3=s_j$, $c_2=s_{j+1}$, $c_4=1-s_{j+1}$, and $\beta_1=\beta_2=0$.
	(Note that $c_1,c_2,c_3,c_4\in[\epsilon,1-\epsilon]$, $c_1,c_2\in[1/N,1-1/N]$, and $c_2-c_1\ge1/N$.)
	For every integer 
	\begin{equation*}
		N \ge \max\{ s_j^{-1}(16\alpha_1^{-1})^{\alpha_1/(\alpha_1-1)},\ 
		(1-s_{j+1})^{-1}(16\alpha_2^{-1})^{\alpha_2/(\alpha_2-1)} \},
	\end{equation*}
	the value 
	\begin{equation}
	\begin{split}
		R_j &= \#\biggl\{ s_jN<n\le s_{j+1}N : 
		\begin{array}{c}
			\{-n^{1/\alpha_1}\}<\phi_{\alpha_1}(s_jN),\\
			\{-(N-n)^{1/\alpha_2}\}<\phi_{\alpha_2}(N-s_{j+1}N)
		\end{array}
		\biggr\}\\
		&\quad- (s_{j+1}-s_j)N\phi_{\alpha_1}(s_jN)\phi_{\alpha_2}(N-s_{j+1}N)
	\end{split}\label{eq18}
	\end{equation}
	satisfies the following inequalities:
	\[
	R_j \ll R_{j,0} + R_{j,1} + R_{j,2} + R_{j,3},
	\]
	\begin{equation*}
		R_{j,0} \ll_\epsilon N^{1/\alpha_1+1/\alpha_2-2},
	\end{equation*}
	\begin{equation*}
		R_{j,1} \ll (s_{j+1}-s_j)s_j^{1/\alpha_1-1}(1-s_{j+1})^{1/\alpha_2-1}N^{1/\alpha_1+1/\alpha_2-1},
	\end{equation*}
	\begin{equation*}
	\begin{split}
		R_{j,2} &\ll_{\alpha_1,\alpha_2,\epsilon} N^{1/\alpha_2-1/2} + N^{1/\alpha_1-1/2}
		+ ( N^{1/\alpha_2-1} + N^{1/\alpha_1-1} )\log N,\quad\text{and}
	\end{split}
	\end{equation*}
	\begin{equation*}
	\begin{split}
		R_{j,3} &\ll_{\alpha_1,\alpha_2,\epsilon} N^{1/2}\log N + (\log N)^2.
	\end{split}
	\end{equation*}
	By $\alpha_1,\alpha_2\in(1,2)$ and $1/\alpha_1+1/\alpha_2>3/2$, we have 
	\begin{equation}
		\limsup_{N\to\infty} \frac{1}{N^{1/\alpha_1+1/\alpha_2-1}}\sum_{j=1}^{m-1} R_j
		\ll \sum_{j=1}^{m-1} (s_{j+1}-s_j)s_j^{1/\alpha_1-1}(1-s_{j+1})^{1/\alpha_2-1}.
		\label{eq19}
	\end{equation}
	Since $0<\phi_{\alpha_i}(x)\ll x^{1/\alpha_i-1}$ for $i=1,2$, it follows that 
	\begin{align*}
		&\quad \sum_{j=1}^{m-1} (s_{j+1}-s_j)N\phi_{\alpha_1}(s_jN)\phi_{\alpha_2}(N-s_{j+1}N)\\
		&\ll N^{1/\alpha_1+1/\alpha_2-1}\sum_{j=1}^{m-1} (s_{j+1}-s_j)s_j^{1/\alpha_1-1}(1-s_{j+1})^{1/\alpha_2-1}.
	\end{align*}
	This, \eqref{eq14}, \eqref{eq18}, and \eqref{eq19} yield that 
	\begin{align*}
		\limsup_{N\to\infty} \frac{1}{N^{1/\alpha_1+1/\alpha_2-1}}\#\biggl\{ \epsilon N<n\le (1-\epsilon)N : 
		\begin{array}{c}
			\{-n^{1/\alpha_1}\}<\phi_{\alpha_1}(n),\\
			\{-(N-n)^{1/\alpha_2}\}<\phi_{\alpha_2}(N-n)
		\end{array}
		\biggr\}\\
		\ll \sum_{j=1}^{m-1} (s_{j+1}-s_j)s_j^{1/\alpha_1-1}(1-s_{j+1})^{1/\alpha_2-1}.
	\end{align*}
	Letting $m\to\infty$, we obtain the desired inequality.
\end{proof}

Using Lemmas~\ref{lem04}, \ref{lem03}, and \ref{lem03''}, 
we prove Theorems~\ref{main1} and \ref{main2}.

\begin{proof}[Proof of Theorem~$\ref{main1}$]
	First, it is clear that 
	\begin{align}
		\cR_{\alpha_1,\alpha_2}(N)
		&= \#\{ 1\le n<N : \exists(n_1,n_2)\in\mathbb{N}^2,\ \lfloor{n_1^{\alpha_1}}\rfloor=n,\ \lfloor{n_2^{\alpha_2}}\rfloor=N-n \}\nonumber\\
		&= \#\biggl\{ 1\le n<N : 
		\begin{array}{l}
			\exists(n_1,n_2)\in\mathbb{N}^2,\ n^{1/\alpha_1}\le n_1<(n+1)^{1/\alpha_1}\text{ and}\\
			(N-n)^{1/\alpha_2}\le n_2<(N-n+1)^{1/\alpha_2}
		\end{array}
		\biggr\}. \label{eq17}
	\end{align}
	Also, if real numbers $a<b$ satisfy $0<b-a<1$, then the following two conditions are equivalent to each other: 
	(i) the interval $(a,b]$ contains exact one integer; (ii) $\{b\}<b-a$. 
	Since $0<\phi_{\alpha_i}(x)<1$ for $i=1,2$, \eqref{eq17} is equal to 
	\begin{equation*}
		\#\biggl\{ 1\le n<N : 
		\begin{array}{c}
			\{-n^{1/\alpha_1}\}<\phi_{\alpha_1}(n),\\
			\{-(N-n)^{1/\alpha_2}\}<\phi_{\alpha_2}(N-n)
		\end{array}
		\biggr\}.
	\end{equation*}
	Therefore, Theorem~\ref{main1} follows from Lemma~\ref{lem04}.
\end{proof}

\begin{proof}[Proof of Theorem~$\ref{main2}$]
	Theorem~\ref{main2} follows from Lemmas~\ref{lem03} and \ref{lem03''} 
	in the same way as the proof of Theorem~\ref{main1}.
\end{proof}

\section{Heuristic argument}\label{heuristic}

In this section, we see asymptotic formulas for the following 
$\cN_\alpha^{(1,2)}(N)$, $\cN_\alpha^{(3)}(N)$, and $\cN_\alpha^{\AP}(N)$ in a heuristic way.
For a real number $\alpha>1$ and an integer $N\ge1$, 
define the numbers $\cN_\alpha^{(1,2)}(N)$, $\cN_\alpha^{(3)}(N)$, and $\cN_\alpha^{\AP}(N)$ as 
\begin{align*}
	\cN_\alpha^{(1,2)}(N)
	&= \#\{ (l,m,n)\in\mathbb{N}^3 : l,m\le N,\ \lfloor{l^\alpha}\rfloor+\lfloor{m^\alpha}\rfloor=\lfloor{n^\alpha}\rfloor \},\\
	\cN_\alpha^{(3)}(N)
	&= \#\{ (l,m,n)\in\mathbb{N}^3 : n\le N,\ \lfloor{l^\alpha}\rfloor+\lfloor{m^\alpha}\rfloor=\lfloor{n^\alpha}\rfloor \},\quad\text{and}\\
	\cN_\alpha^{\AP}(N)
	&= \#\{ (l,m,n)\in\mathbb{N}^3 : l<m<n\le N,\ \lfloor{l^\alpha}\rfloor+\lfloor{n^\alpha}\rfloor=2\lfloor{m^\alpha}\rfloor \},
\end{align*}
respectively. When $\alpha>1$ is close to $1$, 
we have already estimated $\cN_\alpha^{(3)}(N)$ and $\cN_\alpha^{\AP}(N)$ in Corollaries~\ref{main1''}, \ref{main2''}, and \ref{main2'''}.

\begin{conjecture}\label{conj1}
	For every $\alpha\in(1,2)\cup(2,3)$, we have 
	\begin{equation}
	\begin{split}
		\lim_{N\to\infty} \frac{\cN_\alpha^{(1,2)}(N)}{N^{3-\alpha}}
		&= \alpha^{-3}\iint_{0<x,y\le1} \bigl( xy(x+y) \bigr)^{1/\alpha-1}\,dxdy\\
		&= \frac{\Gamma(1+1/\alpha)^2}{(3-\alpha)\Gamma(2/\alpha)} + I(\alpha)
	\end{split}\label{eq:conj1}
	\end{equation}
	and 
	\begin{equation}
	\begin{split}
		\lim_{N\to\infty} \frac{\cN_\alpha^{(3)}(N)}{N^{3-\alpha}}
		&= \alpha^{-3}\iint_{\substack{x,y>0 \\ x+y\le1}} \bigl( xy(x+y) \bigr)^{1/\alpha-1}\,dxdy\\
		&= \frac{\Gamma(1+1/\alpha)^2}{(3-\alpha)\Gamma(2/\alpha)},
	\end{split}\label{eq:conj2}
	\end{equation}
	where the function $I$ is defined as 
	\[
	I(\alpha) = \alpha^{-3}\int_1^2 u^{3/\alpha-2}\,du \int_{1-1/u}^{1/u} \bigl( (1-v)v \bigr)^{1/\alpha-1}\,dv
	\]
	for $\alpha\in(1,3)$.
\end{conjecture}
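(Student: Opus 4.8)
We obtain \eqref{eq:conj1} and \eqref{eq:conj2} by a density heuristic parallel to the argument behind Theorem~\ref{main1}, followed by an explicit evaluation of the resulting double integrals. \textbf{Reformulation.} Write $M=M(l,m):=\lfloor{l^\alpha}\rfloor+\lfloor{m^\alpha}\rfloor$. Exactly as in the proof of Theorem~\ref{main1} (cf.\ \eqref{eq17}), and because $n\mapsto\lfloor{n^\alpha}\rfloor$ is strictly increasing for $\alpha>1$, one has $M\in\PS(\alpha)$ if and only if $\{-M^{1/\alpha}\}<\phi_\alpha(M)$, in which case the corresponding $n$ is unique and satisfies $n\le N$ precisely when $M\le\lfloor{N^\alpha}\rfloor$. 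Hence
\[
\cN_\alpha^{(1,2)}(N)=\#\bigl\{(l,m)\in\mathbb{N}^2 : l,m\le N,\ \{-M^{1/\alpha}\}<\phi_\alpha(M)\bigr\},
\]
and $\cN_\alpha^{(3)}(N)$ is the same count with the extra restriction $M\le\lfloor{N^\alpha}\rfloor$ (which in turn forces $l,m\ll N$).

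\textbf{Density heuristic and scaling.} The heuristic step is to treat $M^{1/\alpha}\bmod1$ as equidistributed on $[0,1)$ as $(l,m)$ varies, so that the proportion of $(l,m)$ with $\{-M^{1/\alpha}\}<\phi_\alpha(M)$ is $\phi_\alpha(M)\sim\alpha^{-1}M^{1/\alpha-1}\sim\alpha^{-1}(l^\alpha+m^\alpha)^{1/\alpha-1}$ (the local density of $\PS(\alpha)$ near $M$), and then to replace the sum by the integral:
\[
\cN_\alpha^{(1,2)}(N)\approx\frac1\alpha\int_0^N\!\!\int_0^N(l^\alpha+m^\alpha)^{1/\alpha-1}\,dl\,dm,\qquad
\cN_\alpha^{(3)}(N)\approx\frac1\alpha\iint_{l,m>0,\ l^\alpha+m^\alpha\le N^\alpha}(l^\alpha+m^\alpha)^{1/\alpha-1}\,dl\,dm.
\]
Scaling $l=Nx$, $m=Ny$ extracts the factor $N^{2+\alpha(1/\alpha-1)}=N^{3-\alpha}$, and then the substitution $u=x^\alpha$, $v=y^\alpha$ (so $dx=\alpha^{-1}u^{1/\alpha-1}\,du$ and likewise for $dy$) turns each integrand into $\alpha^{-2}\bigl(uv(u+v)\bigr)^{1/\alpha-1}$; this yields the first equality in each of \eqref{eq:conj1} and \eqref{eq:conj2}.

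\textbf{Evaluating the integrals.} For \eqref{eq:conj2}, on the simplex $\{u,v>0,\ u+v\le1\}$ put $u+v=s$, $u=st$: the Jacobian is $s$, the region becomes $s\in(0,1]$, $t\in[0,1]$, and the integral factorises as $\int_0^1 s^{3/\alpha-2}\,ds\cdot\int_0^1\bigl(t(1-t)\bigr)^{1/\alpha-1}\,dt=\frac{\alpha}{3-\alpha}B(1/\alpha,1/\alpha)$. Multiplying by $\alpha^{-3}$ and using $\alpha^{-1}\Gamma(1/\alpha)=\Gamma(1+1/\alpha)$ and $B(1/\alpha,1/\alpha)=\Gamma(1/\alpha)^2/\Gamma(2/\alpha)$ gives the stated value $\Gamma(1+1/\alpha)^2/\bigl((3-\alpha)\Gamma(2/\alpha)\bigr)$. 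For \eqref{eq:conj1}, the unit square $\{0<u,v\le1\}$ exceeds that simplex by the triangle $\{0<u,v\le1,\ u+v>1\}$, so the extra contribution is $\alpha^{-3}$ times the integral of $\bigl(uv(u+v)\bigr)^{1/\alpha-1}$ over that triangle; writing $u+v=w\in(1,2]$ and $u=wt$ (the constraints $0<u,v\le1$ becoming $t\in[1-1/w,1/w]$, with Jacobian $w$) reproduces exactly the integral $I(\alpha)$ of the statement, establishing the second equality in \eqref{eq:conj1}.

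\textbf{Main obstacle.} The whole difficulty lies in the equidistribution step: there is no available bound showing that $M^{1/\alpha}\bmod1$ is equidistributed uniformly enough as $M=\lfloor{l^\alpha}\rfloor+\lfloor{m^\alpha}\rfloor$ ranges over a two-parameter family, so the passage from the indicator to $\phi_\alpha(M)$ is not justified. A rigorous treatment would apply Lemma~\ref{Koksma} in this two-dimensional situation and would need estimates for two-variable exponential sums $\sum_{l,m}e\bigl(h(\lfloor{l^\alpha}\rfloor+\lfloor{m^\alpha}\rfloor)^{1/\alpha}\bigr)$ far beyond the single-variable input used for Theorems~\ref{main1} and \ref{main2}; moreover the heuristic must break down at $\alpha=2$, where \eqref{eqN3} exhibits a secondary term of order $N\log N$ invisible to the integral, so one would also have to pin down $(1,2)\cup(2,3)$ as the admissible range. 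Accordingly, only numerical evidence (Section~\ref{heuristic}) is offered in support.
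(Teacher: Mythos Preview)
Your heuristic is correct and lands on the same integrals, but through a different parametrisation than the paper's. The paper rewrites $\cN_\alpha^{(1,2)}(N)$ as a count over pairs of \emph{values} $n_1,n_2\le N^\alpha$ subject to \emph{three} fractional-part conditions $\{-n_i^{1/\alpha}\}<\phi_\alpha(n_i)$ ($i=1,2$) and $\{-(n_1+n_2)^{1/\alpha}\}<\phi_\alpha(n_1+n_2)$; the equidistribution heuristic then replaces the indicator by the product $\phi_\alpha(n_1)\phi_\alpha(n_2)\phi_\alpha(n_1+n_2)\sim\alpha^{-3}(n_1n_2(n_1+n_2))^{1/\alpha-1}$, and rescaling by $N^\alpha$ gives the double integral over the unit square directly. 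You instead keep the \emph{indices} $l,m\le N$, impose a \emph{single} condition on $M=\lfloor l^\alpha\rfloor+\lfloor m^\alpha\rfloor$, replace it by $\alpha^{-1}(l^\alpha+m^\alpha)^{1/\alpha-1}$, and only recover the product form $(uv(u+v))^{1/\alpha-1}$ after the substitution $u=x^\alpha$, $v=y^\alpha$. Your version is a touch leaner (one equidistribution hypothesis rather than three, and the integration domain is already the unit square before any change of variable); the paper's version keeps all three membership-in-$\PS(\alpha)$ constraints explicit and in the same form used throughout Section~\ref{proof}, so a future rigorous attack via Lemma~\ref{Koksma} would slot in more directly. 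The evaluations of the simplex and triangle integrals via $(s,t)=(u+v,u/(u+v))$ are identical to the paper's $(x,y)=(u(1-v),uv)$ up to renaming.
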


\begin{conjecture}\label{conj2}
	For every $\alpha\in(1,2)\cup(2,3)$, we have 
	\begin{equation}
	\begin{split}
		\lim_{N\to\infty} \frac{\cN_\alpha^{\AP}(N)}{N^{3-\alpha}}
		&= 2^{-1/\alpha-1}\alpha^{-3}\iint_{0<x,y\le1} \bigl( xy(x+y) \bigr)^{1/\alpha-1}\,dxdy\\
		&= 2^{-1/\alpha-1}\Bigl( \frac{\Gamma(1+1/\alpha)^2}{(3-\alpha)\Gamma(2/\alpha)} + I(\alpha) \Bigr),
	\end{split}\label{eq:conj3}
	\end{equation}
	where $I(\alpha)$ is the same as in Conjecture~$\ref{conj1}$.
\end{conjecture}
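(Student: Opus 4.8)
The plan is to give the heuristic derivation that belongs to Section~\ref{heuristic}; I do not expect a fully rigorous proof for the whole range $\alpha\in(1,2)\cup(2,3)$, which is exactly why the statement is phrased as a conjecture. The governing principle is that $\PS(\alpha)$ should behave like a random subset of $\mathbb{N}$ of local density $\alpha^{-1}t^{1/\alpha-1}$ near $t$ (since $\#(\PS(\alpha)\cap[1,t])\sim t^{1/\alpha}$), so that $2\PS(\alpha)=\{2\lfloor m^\alpha\rfloor:m\in\mathbb{N}\}$, having counting function $\#(2\PS(\alpha)\cap[1,t])=\#\{m:\lfloor m^\alpha\rfloor\le t/2\}\sim(t/2)^{1/\alpha}$, should have local density $2^{-1/\alpha}\alpha^{-1}t^{1/\alpha-1}$ near $t$.

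First I would reduce to a sum over pairs $(l,n)$. If $l<n$ and $\lfloor l^\alpha\rfloor+\lfloor n^\alpha\rfloor=2\lfloor m^\alpha\rfloor$, then $\lfloor l^\alpha\rfloor<\lfloor m^\alpha\rfloor<\lfloor n^\alpha\rfloor$, which forces $l<m<n$ outside a lower-order set of pairs with $\lfloor l^\alpha\rfloor=\lfloor n^\alpha\rfloor$; thus the middle term and the ordering come for free. Modelling ``$\lfloor l^\alpha\rfloor+\lfloor n^\alpha\rfloor\in2\PS(\alpha)$'' as an event of probability equal to the above local density at that point, and replacing $\lfloor k^\alpha\rfloor$ by $k^\alpha$, one is led to
\[
\cN_\alpha^{\AP}(N)\approx\sum_{1\le l<n\le N}\frac{2^{-1/\alpha}}{\alpha}\bigl(l^\alpha+n^\alpha\bigr)^{1/\alpha-1}.
\]
Symmetrising in $l,n$ (the diagonal $l=n$ contributes $\ll N^{2-\alpha}=o(N^{3-\alpha})$) gives $\tfrac{2^{-1/\alpha-1}}{\alpha}\sum_{l,n\le N}\bigl(l^\alpha+n^\alpha\bigr)^{1/\alpha-1}$, and writing $l=uN$, $n=vN$ turns this into a Riemann sum, so that
\[
\lim_{N\to\infty}\frac{\cN_\alpha^{\AP}(N)}{N^{3-\alpha}}=\frac{2^{-1/\alpha-1}}{\alpha}\iint_{0<u,v\le1}\bigl(u^\alpha+v^\alpha\bigr)^{1/\alpha-1}\,du\,dv.
\]
The substitution $x=u^\alpha$, $y=v^\alpha$ (with $du\,dv=\alpha^{-2}(xy)^{1/\alpha-1}\,dx\,dy$) rewrites the right-hand side as $2^{-1/\alpha-1}\alpha^{-3}\iint_{0<x,y\le1}\bigl(xy(x+y)\bigr)^{1/\alpha-1}\,dx\,dy$, which is the first displayed equality. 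Note that this is exactly $2^{-1/\alpha-1}$ times the conjectural main term for $\cN_\alpha^{(1,2)}(N)/N^{3-\alpha}$ in \eqref{eq:conj1}: the factor $2^{-1/\alpha}$ is the price of $2\PS(\alpha)$ in place of $\PS(\alpha)$, and the $\tfrac12$ is the passage from ordered to unordered pairs, which is the internal consistency check one wants.

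The second equality is then purely analytic: it is the evaluation of $\alpha^{-3}\iint_{0<x,y\le1}\bigl(xy(x+y)\bigr)^{1/\alpha-1}\,dx\,dy$ already recorded in Conjecture~\ref{conj1}, obtained by splitting the square along $x+y=1$ and using the substitution $x=uv$, $y=u(1-v)$: the part $x+y\le1$ gives $\Gamma(1+1/\alpha)^2/\bigl((3-\alpha)\Gamma(2/\alpha)\bigr)$ through the Beta integral $B(1/\alpha,1/\alpha)$, and the part $x+y>1$ (with $u\in(1,2]$, $v\in(1-1/u,1/u)$) gives $I(\alpha)$. The main obstacle is that the density heuristic of the first step is precisely what resists proof: a rigorous treatment would require estimating a ternary exponential sum carrying three fractional-power phases, and the Koksma plus exponent-pair/van der Corput machinery of Section~\ref{proof} only delivers this for $\alpha$ close to $1$ (cf.\ Corollary~\ref{main2'''}); moreover at $\alpha=2$ the heuristic is simply false, the true count being of order $x\log x$ by \eqref{eqNAP}, so any argument valid on $(1,2)\cup(2,3)$ must encode why the integer $2$ is exceptional.
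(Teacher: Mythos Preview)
Your heuristic is correct and reaches the right formula, but it is organised differently from the paper's own derivation. The paper parametrises by the \emph{values} $n_1=\lfloor l^\alpha\rfloor$ and $n_2=\lfloor m^\alpha\rfloor$ of the two smallest terms of the progression, rewrites $\cN_\alpha^{\AP}(N)$ exactly as a count of pairs $(n_1,n_2)$ with $1\le n_1<n_2<2n_2-n_1\le N^\alpha$ satisfying three fractional-part conditions (on $-n_1^{1/\alpha}$, $-n_2^{1/\alpha}$, $-(2n_2-n_1)^{1/\alpha}$), and then invokes simultaneous equidistribution to obtain
\[
N^{3-\alpha}\,\alpha^{-3}\iint_{0<x<y<2y-x\le1}\bigl(xy(2y-x)\bigr)^{1/\alpha-1}\,dx\,dy,
\]
after which the change of variables $(x,z)=(x,2y-x)$ produces the factor $2^{-1/\alpha}$ from the Jacobian and the rescaling of $y$, and symmetrising in $x,z$ gives the extra $\tfrac12$. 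You instead parametrise by the \emph{indices} $(l,n)$ of the two extreme terms and model membership of $\lfloor l^\alpha\rfloor+\lfloor n^\alpha\rfloor$ in $2\PS(\alpha)$ by a single local density $2^{-1/\alpha}\alpha^{-1}t^{1/\alpha-1}$; the factor $2^{-1/\alpha}$ then enters probabilistically rather than through a Jacobian, and your substitution $x=u^\alpha$, $y=v^\alpha$ replaces the paper's $(x,z)$ change. The paper's route has the advantage of being the literal analogue of the rigorous method in Section~\ref{proof} (so one sees exactly which exponential sum would have to be bounded to upgrade the heuristic), while your density model is quicker and makes the relationship $\cN_\alpha^{\AP}(N)\sim 2^{-1/\alpha-1}\cN_\alpha^{(1,2)}(N)$ transparent from the outset. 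Both arrive at \eqref{eq:conj3}, and your closing remarks on the analytic identity and on the obstruction at $\alpha=2$ match the paper's.
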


Neither Conjecture~\ref{conj1} nor \ref{conj2} includes the case $\alpha=2$.
The asymptotic formulas for $\cN_2^{(1,2)}(N)$, $\cN_2^{(3)}(N)$, and $\cN_2^{\AP}(N)$ 
need a logarithmic factor in leading terms; see \cite{BV}, \eqref{eqN3}, and \eqref{eqNAP}.
This is probably because the functions 
$f_\alpha(l,m,n) = \lfloor{l^\alpha}\rfloor+\lfloor{m^\alpha}\rfloor-\lfloor{n^\alpha}\rfloor$ and 
$g_\alpha(l,m,n) = \lfloor{l^\alpha}\rfloor+\lfloor{n^\alpha}\rfloor-2\lfloor{m^\alpha}\rfloor$ are homogeneous if $\alpha=2$.
However, neither $f_\alpha$ nor $g_\alpha$ is homogeneous if $\alpha>1$ is non-integral.
One can observe that the case $\alpha=2$ is an exception by numerical computation; see Figures~\ref{fig1} and \ref{fig2}.

Comparing Figures~\ref{fig1} and \ref{fig2}, 
we find that some points in Figure~\ref{fig2} are not so near the horizontal line $\mathbb{R}\times\{1\}$.
Hence, we might need to modify the right-hand side of \eqref{eq:conj3}.
However, the non-asymptotic formula $\cN_\alpha^{\AP}(N) \asymp_\alpha N^{3-\alpha}$ is probably true for every $\alpha\in(1,2)$ 
as we have proved it for every $\alpha\in(1,4/3)$ (Corollary~\ref{main2'''}).

\begin{figure}
	\centering
	\includegraphics[scale=0.55]{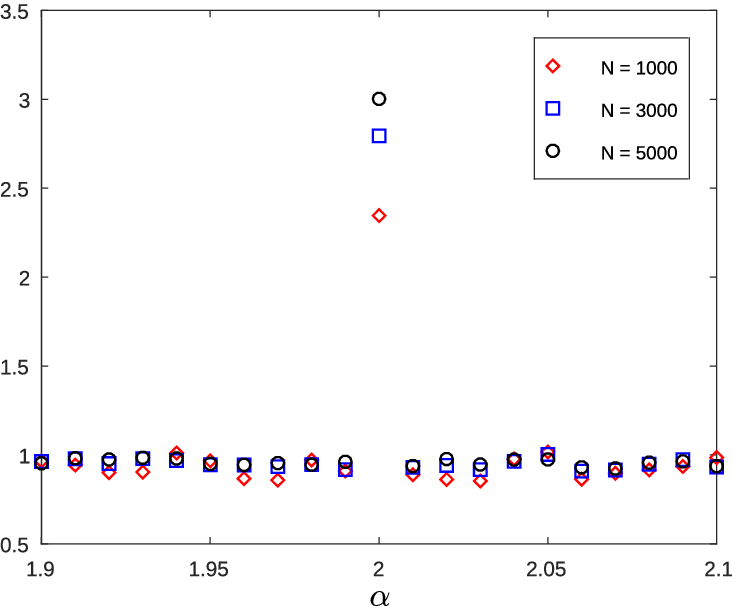}
	\hspace{2em}
	\includegraphics[scale=0.55]{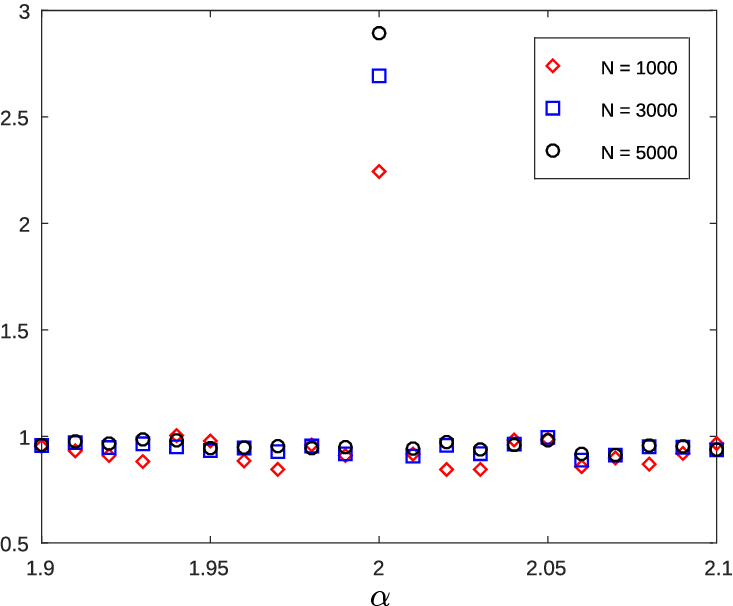}
	\caption{The ordinate of the left (resp.\ right) figure is the value $\cN_2^{(1,2)}(N)/N^{3-\alpha}$ (resp.\ $\cN_2^{(3)}(N)/N^{3-\alpha}$) 
	divided by the right-hand side of \eqref{eq:conj1} (resp.\ \eqref{eq:conj2}), 
	where $N\in\{ 1000+2000i : i=0,1,2 \}$ and $\alpha\in\{ 1.9+0.01i : i=0,1,\ldots,20 \}$.}\label{fig1}
\end{figure}

\begin{figure}
	\centering
	\includegraphics[scale=0.55]{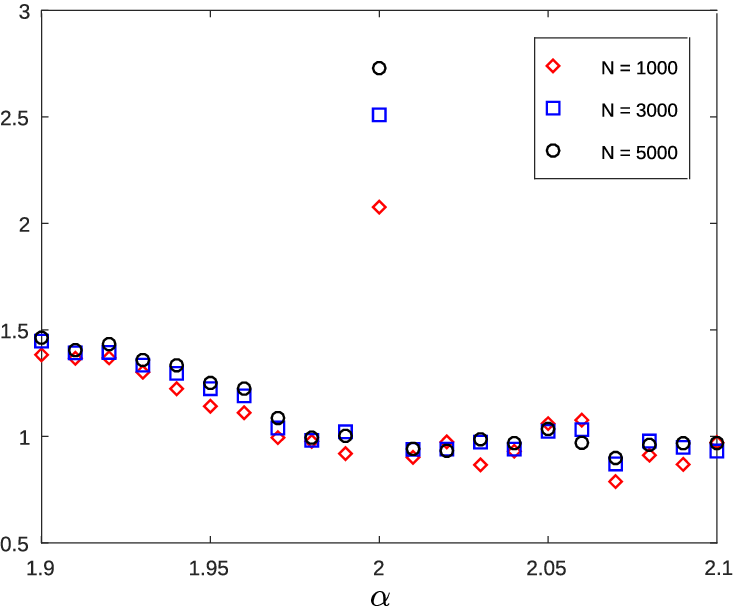}
	\hspace{2em}
	\includegraphics[scale=0.55]{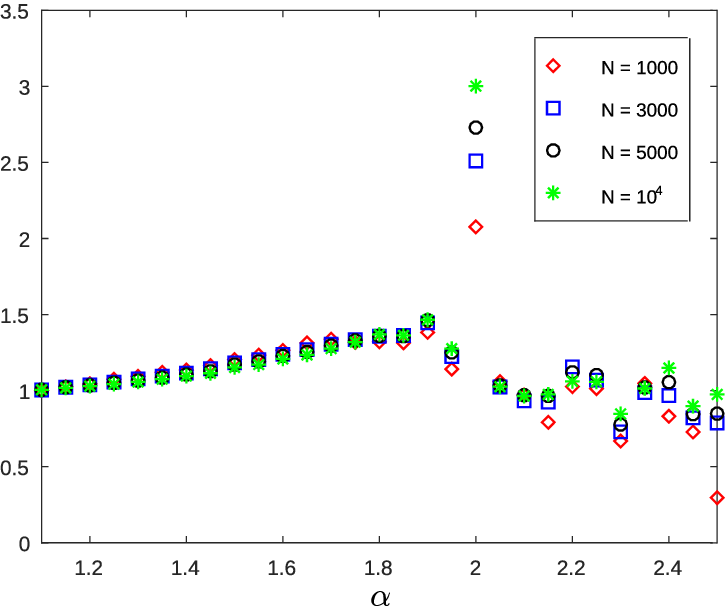}
	\caption{The ordinate of the left (resp.\ right) figure is the value $\cN_\alpha^{\AP}(N)/N^{3-\alpha}$ 
	divided by the right-hand side of \eqref{eq:conj3}, 
	where $N\in\{ 1000+2000i : i=0,1,2 \}$ (resp.\ $N\in\{ 1000+2000i : i=0,1,2 \}\cup\{10^4\}$) and 
	$\alpha\in\{ 1.9+0.01i : i=0,1,\ldots,20 \}$ (resp.\ $\alpha\in\{ 1.1+0.05i : i=0,1,\ldots,28 \}$).}\label{fig2}
\end{figure}

Now, let us consider the following two statements: 
for Lebesgue-a.e.\ $\alpha\in(1,\alpha_0)$ (resp.\ $\alpha>\alpha_0$), 
the equation $x+y=z$ has infinitely (resp.\ at most finitely) many solutions in $\PS(\alpha)$.
Saito \cite{Saito2} conjectured $\alpha_0=3$ for which the two statements hold.
He showed \cite[Theorem~1.2]{Saito2} that, for Lebesgue-a.e.\ $\alpha>3$, 
the equation $x+y=z$ has at most finitely many solutions in $\PS(\alpha)$.
Eqs.~\eqref{eq:conj1} and \eqref{eq:conj2} support his conjecture, 
since $N^{3-\alpha}$ vanishes or diverges to positive infinity as $N\to\infty$ according to $\alpha>3$ or $\alpha<3$.

\begin{proof}[Heuristic argument on Conjecture~$\ref{conj1}$]
	It can easily be checked that 
	\begin{equation*}
		\cN_\alpha^{(1,2)}(N) = \#\left\{ (n_1,n_2)\in\mathbb{N}^2 : 
		\begin{array}{c}
			n_1,n_2\le N^\alpha,\\
			\{-n_1^{1/\alpha}\} < \phi_\alpha(n_1),\ \{-n_2^{1/\alpha}\} < \phi_\alpha(n_2),\\
			\{-(n_1+n_2)^{1/\alpha}\} < \phi_\alpha(n_1+n_2)
		\end{array}
		\right\}
	\end{equation*}
	in the same way as the proof of Theorem~\ref{main1}.
	Assume that the sequences $(-n_1^{1/\alpha})$, $(-n_2^{1/\alpha})$, and $\bigl( -(n_1+n_2)^{1/\alpha} \bigr)$ 
	are ``sufficiently'' equidistributed modulo $1$ when $n_1$ and $n_2$ run over all integers in the interval $[1,N^\alpha]$.
	Then 
	\begin{equation}
	\begin{split}
		\cN_\alpha^{(1,2)}(N) &\sim \sum_{1\le n_1,n_2\le N^\alpha} \phi_\alpha(n_1)\phi_\alpha(n_2)\phi_\alpha(n_1+n_2)\\
		&\sim \sum_{1\le n_1,n_2\le N^\alpha} \alpha^{-3}\bigl( n_1n_2(n_1+n_2) \bigr)^{1/\alpha-1}\\
		&= N^{3-\alpha}\cdot\frac{\alpha^{-3}}{N^{2\alpha}}\sum_{1\le n_1,n_2\le N^\alpha}
		\Bigl( \frac{n_1}{N^\alpha}\cdot\frac{n_2}{N^\alpha}\cdot\frac{n_1+n_2}{N^\alpha} \Bigr)^{1/\alpha-1}\\
		&\sim N^{3-\alpha}\cdot\alpha^{-3}\iint_{0<x,y\le1} \bigl( xy(x+y) \bigr)^{1/\alpha-1}\,dxdy
	\end{split}\label{eq:conj1'}
	\end{equation}
	as $N\to\infty$.
	Also, assume that the sequences $(-n_1^{1/\alpha})$, $(-n_2^{1/\alpha})$, and $\bigl( -(n_1+n_2)^{1/\alpha} \bigr)$ 
	are ``sufficiently'' equidistributed modulo $1$ when $n_1$ and $n_2$ run over all positive integers with $n_1+n_2\le N^\alpha$.
	Then 
	\begin{equation}
	\begin{split}
		\cN_\alpha^{(3)}(N) &= \#\left\{ (n_1,n_2)\in\mathbb{N}^2 : 
		\begin{array}{c}
			n_1+n_2\le N^\alpha,\\
			\{-n_1^{1/\alpha}\} < \phi_\alpha(n_1),\ \{-n_2^{1/\alpha}\} < \phi_\alpha(n_2),\\
			\{-(n_1+n_2)^{1/\alpha}\} < \phi_\alpha(n_1+n_2)
		\end{array}
		\right\}\\
		&\sim N^{3-\alpha}\cdot\alpha^{-3}\iint_{\substack{x,y>0 \\ x+y\le1}} \bigl( xy(x+y) \bigr)^{1/\alpha-1}\,dxdy
	\end{split}\label{eq:conj2'}
	\end{equation}
	as $N\to\infty$.
	By the change of variables $(x,y)=( u(1-v), uv )$, it turns out that 
	\begin{equation}
	\begin{split}
		&\quad \iint_{\substack{x,y>0 \\ x+y\le1}} \bigl( xy(x+y) \bigr)^{1/\alpha-1}\,dxdy
		= \iint_{0<u,v\le1} \bigl( u^3(1-v)v \bigr)^{1/\alpha-1}u\,dudv\\
		&= \int_0^1 u^{3/\alpha-2}\,du \int_0^1 \bigl( (1-v)v \bigr)^{1/\alpha-1}\,dv
		= \frac{B(1/\alpha,1/\alpha)}{3/\alpha-1}
		= \frac{\alpha\Gamma(1/\alpha)^2}{(3-\alpha)\Gamma(2/\alpha)}\\
		&= \frac{\alpha^3\Gamma(1+1/\alpha)^2}{(3-\alpha)\Gamma(2/\alpha)}.
	\end{split}\label{eq:int2}
	\end{equation}
	By \eqref{eq:conj2'} and \eqref{eq:int2}, we obtain \eqref{eq:conj2}.
	Similarly, 
	\begin{align*}
		&\quad \iint_{\substack{0<x,y\le1 \\ x+y>1}} \bigl( xy(x+y) \bigr)^{1/\alpha-1}\,dxdy\\
		&= \int_1^2 u^{3/\alpha-2}\,du \int_{1-1/u}^{1/u} \bigl( (1-v)v \bigr)^{1/\alpha-1}\,dv
		= \alpha^3I(\alpha).
	\end{align*}
	Therefore, 
	\begin{equation}
		\alpha^{-3}\iint_{0<x,y\le1} \bigl( xy(x+y) \bigr)^{1/\alpha-1}\,dxdy
		= \frac{\Gamma(1+1/\alpha)^2}{(3-\alpha)\Gamma(2/\alpha)} + I(\alpha).
		\label{eq:int1}
	\end{equation}
	By \eqref{eq:conj1'} and \eqref{eq:int1}, we obtain \eqref{eq:conj1}.
\end{proof}

\begin{proof}[Heuristic argument on Conjecture~$\ref{conj2}$]
	Assume that the sequences $(-n_1^{1/\alpha})$, $(-n_2^{1/\alpha})$, and $\bigl( -(2n_2-n_1)^{1/\alpha} \bigr)$ 
	are ``sufficiently'' equidistributed modulo $1$ when $n_1$ and $n_2$ run over all positive integers with $n_1<n_2<2n_2-n_1\le N^\alpha$.
	Then 
	\begin{equation}
	\begin{split}
		\cN_\alpha^{\AP}(N) &= \#\left\{ (n_1,n_2)\in\mathbb{N}^2 : 
		\begin{array}{c}
			1\le n_1<n_2<2n_2-n_1\le N^\alpha,\\
			\{-n_1^{1/\alpha}\} < \phi_\alpha(n_1),\ \{-n_2^{1/\alpha}\} < \phi_\alpha(n_2),\\
			\{-(2n_2-n_1)^{1/\alpha}\} < \phi_\alpha(2n_2-n_1)
		\end{array}
		\right\}\\
		&\sim N^{3-\alpha}\cdot\alpha^{-3}\iint_{0<x<y<2y-x\le1} \bigl( xy(2y-x) \bigr)^{1/\alpha-1}\,dxdy.
	\end{split}\label{eq:conj3'}
	\end{equation}
	as $N\to\infty$.
	By the change of variables $(x,z)=(x,2y-x)$, it turns out that 
	\begin{equation}
	\begin{split}
		&\quad \iint_{0<x<y<2y-x\le1} \bigl( xy(2y-x) \bigr)^{1/\alpha-1}\,dxdy\\
		&= 2^{-1/\alpha}\iint_{0<x<z\le1} \bigl( xz(x+z) \bigr)^{1/\alpha-1}\,dxdz\\
		&= 2^{-1/\alpha-1}\iint_{0<x,z\le1} \bigl( xz(x+z) \bigr)^{1/\alpha-1}\,dxdz.
	\end{split}\label{eq:int3}
	\end{equation}
	By \eqref{eq:conj3'}, \eqref{eq:int3}, and \eqref{eq:int1}, 
	we obtain \eqref{eq:conj3}.
\end{proof}

\section*{Acknowledgments}
The author thanks Dr.\ Kota Saito for discussing a heuristic way together in attending a conference.
The author was supported by JSPS KAKENHI Grant Numbers JP22J00339 and JP22KJ1621.

\appendix
\section{Proof of Theorem~$\ref{mainDes2}$}

In this appendix, we prove Theorem~\ref{mainDes2}.
For this purpose, we begin with the following lemma, 
which is a simple generalization of \cite[pp.~394--395, Fundamental lemma]{Deshouillers2}.

\begin{lemma}\label{lemDes2}
	Let $\alpha_1,\alpha_2>1$ be real numbers, and $N\ge2$ be an integer.
	If a positive integer $n_1\le N^{1/\alpha_1}$ satisfies the following two conditions, 
	then there exists a positive integer $n_2$ such that $\lfloor{n_1^{\alpha_1}}\rfloor+\lfloor{n_2^{\alpha_2}}\rfloor=N$.
	\renewcommand{\theenumi}{$\arabic{enumi}$}
	\renewcommand{\labelenumi}{\theenumi.}
	\begin{enumerate}
		\item
		$1 - \{ (N+1/2-n_1^{\alpha_1})^{1/\alpha_2} \} \le (1/2\alpha_2)(N+1/2)^{1/\alpha_2-1}$.
		\item
		$\{ n_1^{\alpha_1} \}<1/2$.
	\end{enumerate}
\end{lemma}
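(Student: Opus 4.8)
The plan is to construct $n_2$ explicitly, in the spirit of Deshouillers' fundamental lemma, with the organizing observation that one should work with $\alpha_2$-th roots rather than $\alpha_2$-th powers, since the quantity that must be controlled turns out to be a gap between two nearby values of $t\mapsto t^{1/\alpha_2}$. Put $y = N + \tfrac12 - n_1^{\alpha_1}$ and $M = N - \lfloor n_1^{\alpha_1}\rfloor$; the goal is to find a positive integer $n_2$ with $\lfloor n_2^{\alpha_2}\rfloor = M$, which is exactly $\lfloor n_1^{\alpha_1}\rfloor + \lfloor n_2^{\alpha_2}\rfloor = N$. I would take $n_2 := \lceil y^{1/\alpha_2}\rceil$.

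First I would record the preliminaries. From $n_1\le N^{1/\alpha_1}$ we get $n_1^{\alpha_1}\le N$, hence $y\ge\tfrac12>0$ and $n_2\ge1$; moreover $y^{1/\alpha_2}$ cannot be an integer, for otherwise condition~1 would force $(N+\tfrac12)^{1-1/\alpha_2}\le 1/(2\alpha_2)<1$, which is impossible for $N\ge2$ and $\alpha_2>1$. Therefore $n_2 = y^{1/\alpha_2}+\delta$ with $\delta := 1-\{y^{1/\alpha_2}\}\in(0,1)$, so that $n_2^{\alpha_2}>y$. Next I would locate $M$ relative to $y$: writing $\theta=\{n_1^{\alpha_1}\}$, condition~2 gives $\theta\in[0,\tfrac12)$ and $M = N - n_1^{\alpha_1}+\theta = y-\tfrac12+\theta$, so $M<y\le M+\tfrac12$; in particular $M<y$ and $y+\tfrac12\le M+1$. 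The lower bound $\lfloor n_2^{\alpha_2}\rfloor\ge M$ is then immediate from $n_2^{\alpha_2}>y>M$, and the whole problem reduces to the upper bound $n_2^{\alpha_2}<M+1$, which (after raising to the power $\alpha_2$) follows once we show $n_2<(M+1)^{1/\alpha_2}$, i.e.\ $\delta<(M+1)^{1/\alpha_2}-y^{1/\alpha_2}$.

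This is the heart of the matter, and I would prove it through the chain
\[
\delta \le \frac{1}{2\alpha_2}\Bigl(N+\tfrac12\Bigr)^{1/\alpha_2-1} \le \frac{1}{2\alpha_2}\Bigl(y+\tfrac12\Bigr)^{1/\alpha_2-1} < \Bigl(y+\tfrac12\Bigr)^{1/\alpha_2}-y^{1/\alpha_2} \le (M+1)^{1/\alpha_2}-y^{1/\alpha_2},
\]
in which the first step is condition~1, the second uses $N+\tfrac12\ge y+\tfrac12$ (because $n_1^{\alpha_1}\ge1$) together with the monotonicity of $t\mapsto t^{1/\alpha_2-1}$, and the last uses $y+\tfrac12\le M+1$. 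The one genuinely analytic step is the strict inequality $\tfrac{1}{2\alpha_2}(y+\tfrac12)^{1/\alpha_2-1}<(y+\tfrac12)^{1/\alpha_2}-y^{1/\alpha_2}$: since the right side equals $\alpha_2^{-1}\int_y^{y+1/2}t^{1/\alpha_2-1}\,dt$ and the integrand is strictly decreasing, the integral strictly exceeds $\tfrac12\alpha_2^{-1}(y+\tfrac12)^{1/\alpha_2-1}$. I expect this calibration to be the main obstacle: it is the single point where the constant $1/(2\alpha_2)$ in condition~1 has to be matched against the half-unit margin provided by condition~2, and the naive alternative of estimating $n_2^{\alpha_2}-y$ directly via $n_2^{\alpha_2}-y\le\alpha_2 n_2^{\alpha_2-1}\delta$ fails because it introduces the factor $\bigl(n_2/(N+\tfrac12)^{1/\alpha_2}\bigr)^{\alpha_2-1}$, which is slightly larger than $1$ and does not decay; passing to roots is precisely what makes the constants fit. (The degenerate case $M=0$, possible a priori only if $n_1=N^{1/\alpha_1}$ and then $y=\tfrac12$, is vacuous, since the chain would then read $\delta<\delta$ and condition~1 could not hold.) Combining the two bounds gives $n_2^{\alpha_2}\in(M,M+1)$, hence $\lfloor n_2^{\alpha_2}\rfloor=M$, which finishes the proof.
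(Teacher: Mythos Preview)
Your proof is correct and follows essentially the same approach as the paper. Both arguments take $n_2=\lfloor y^{1/\alpha_2}\rfloor+1=\lceil y^{1/\alpha_2}\rceil$ with $y=N+\tfrac12-n_1^{\alpha_1}$ and use the mean value theorem (or, equivalently, your integral estimate) for $t\mapsto t^{1/\alpha_2}$ on $[y,y+\tfrac12]$ to bound $n_2$ from above; the only difference is organisational: the paper first proves $N+\tfrac12<n_1^{\alpha_1}+n_2^{\alpha_2}<N+1$ and invokes condition~2 at the end to pass to floors, whereas you introduce $M=N-\lfloor n_1^{\alpha_1}\rfloor$ up front and show $\lfloor n_2^{\alpha_2}\rfloor=M$ directly.
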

\begin{proof}
	Assume that a positive integer $n_1\le N^{1/\alpha_1}$ satisfies conditions~1--2.
	Set $n_2=\lfloor{(N+1/2-n_1^{\alpha_1})^{1/\alpha_2}}\rfloor+1$.
	By condition~1, 
	\begin{equation}
	\begin{split}
		(N+1/2-n_1^{\alpha_1})^{1/\alpha_2} < n_2
		&= (N+1/2-n_1^{\alpha_1})^{1/\alpha_2} + 1 - \{ (N+1/2-n_1^{\alpha_1})^{1/\alpha_2} \}\\
		&\le (N+1/2-n_1^{\alpha_1})^{1/\alpha_2-1} + (1/2\alpha_2)(N+1/2)^{1/\alpha_2-1}.
	\end{split}\label{eq20}
	\end{equation}
	The mean value theorem implies that 
	\[
	(N+1-n_1^{\alpha_1})^{1/\alpha_2} = (N+1/2-n_1^{\alpha_1})^{1/\alpha_2} + \frac{1}{2\alpha_2}(N+1/2-n_1^{\alpha_1}+\theta)^{1/\alpha_2-1}
	\]
	for some $\theta\in(1/2,1)$.
	By this and $0 < N+1/2-n_1^{\alpha_1}+\theta < N+1/2$, we have 
	\[
	(N+1-n_1^{\alpha_1})^{1/\alpha_2} > (N+1/2-n_1^{\alpha_1})^{1/\alpha_2} + (1/2\alpha_2)(N+1/2)^{1/\alpha_2-1}.
	\]
	This and \eqref{eq20} yield that 
	\[
	(N+1/2-n_1^{\alpha_1})^{1/\alpha_2} < n_2 < (N+1-n_1^{\alpha_1})^{1/\alpha_2},
	\]
	which is equivalent to 
	\[
	N+1/2 < n_1^{\alpha_1}+n_2^{\alpha_2} < N+1.
	\]
	By condition~2, 
	\begin{align*}
		N+1 &> \lfloor{n_1^{\alpha_1}}\rfloor+\lfloor{n_2^{\alpha_2}}\rfloor
		= n_1^{\alpha_1}+n_2^{\alpha_2} - \{ n_1^{\alpha_1} \} - \{ n_2^{\alpha_2} \}\\
		&> (N+1/2)-1/2-1 = N-1.
	\end{align*}
	Therefore, $\lfloor{n_1^{\alpha_1}}\rfloor+\lfloor{n_2^{\alpha_2}}\rfloor=N$.
\end{proof}

An outline of the proof of Theorem~\ref{mainDes2} is as follows.
By Lemmas~\ref{lemDes2} and \ref{Koksma}, 
Theorem~\ref{mainDes2} reduces to estimating exponential sums.
To estimate exponential sums, we use Lemmas~\ref{2ndderiv}, \ref{3rdderiv}, and \ref{exp-pair}.
Although Deshouillers \cite{Deshouillers2} used the third derivative test of van der Corput, 
we use Lemma~\ref{3rdderiv} instead.

\begin{proof}[Proof of Theorem~$\ref{mainDes2}$]
	Without loss of generality, we may assume that $1<\alpha_1\le\alpha_2<3/2$.
	Let $N\ge e^4-1/2$ be an integer.
	Set $c_1=1/2$, $c_2=3/4$, $X=N+1/2$, 
	\begin{equation}
		H_1=\log X \quad\text{and}\quad
		H_2=X^{1-1/\alpha_2}\log X.
		\label{eqH''}
	\end{equation}
	Then both $H_1$ and $H_2$ are greater than or equal to $4$.
	Lemma~\ref{Koksma} implies that 
	the value 
	\begin{align*}
		R &= \#\biggl\{ (c_1X)^{1/\alpha_1}\le n\le(c_2X)^{1/\alpha_1} : 
		\begin{array}{c}
			\{n^{\alpha_1}\}<1/2,\\
			1-\{ (X-n^{\alpha_1})^{1/\alpha_2} \}\le(1/2\alpha_2)X^{1/\alpha_2-1}
		\end{array}
		\biggr\}\\
		&\quad- \frac{c_2^{1/\alpha_1}-c_1^{1/\alpha_1}}{4\alpha_2}X^{1/\alpha_1+1/\alpha_2-1}
	\end{align*}
	satisfies the inequality 
	\[
	R \ll R_0 + R_3 + R_{2,1} + R_{2,2} + R_1, 
	\]
	where $f_1(x)=x^{\alpha_1}$, $f_2(x)=(X-x^{\alpha_1})^{1/\alpha_2}$, $\cI=[(c_1X)^{1/\alpha_1}, (c_2X)^{1/\alpha_1}]$, 
	\begin{equation*}
		R_0 = \abs{(c_2^{1/\alpha_1}-c_1^{1/\alpha_1})X^{1/\alpha_1} - \#(\cI\cap\mathbb{Z})}X^{1/\alpha_2-1},
	\end{equation*}
	\begin{equation*}
		R_3 = \sum_{\substack{1\le\abs{h_1}\le H_1 \\ 1\le\abs{h_2}\le H_2}}
		\abs{h_1}^{-1}\min\{ X^{1/\alpha_2-1}, \abs{h_2}^{-1} \}\abs{\sum_{n\in\cI\cap\mathbb{Z}} e\bigl( h_1f_1(n)+h_2f_2(n) \bigr)},
	\end{equation*}
	\begin{equation*}
		R_{2,1} = X^{1/\alpha_2-1}\sum_{1\le h\le H_1} h^{-1}\abs{\sum_{n\in\cI\cap\mathbb{Z}} e\bigl( hf_1(n) \bigr)},
	\end{equation*}
	\begin{equation*}
		R_{2,2} = \sum_{1\le h\le H_2} \min\{ X^{1/\alpha_2-1}, h^{-1} \}\abs{\sum_{n\in\cI\cap\mathbb{Z}} e\bigl( hf_2(n) \bigr)},
		\quad\text{and}
	\end{equation*}
	\begin{equation*}
		R_1 = X^{1/\alpha_1}\Bigl( \frac{1}{H_2} + \frac{X^{1/\alpha_2-1}}{H_1} \Bigr).
	\end{equation*}
	Also, partition the sum $R_3$ into two sums: 
	\begin{align*}
		R_3 = \sum_{\substack{1\le\abs{h_1}\le H_1 \\ 1\le\abs{h_2}\le H_2}}
		= \sum_{\substack{1\le\abs{h_1}\le H_1 \\ 1\le\abs{h_2}\le H_2 \\ h_1h_2>0}}
		+ \sum_{\substack{1\le\abs{h_1}\le H_1 \\ 1\le\abs{h_2}\le H_2 \\ h_1h_2<0}}
		= R_{3,1}+R_{3,2},\quad\text{say}.
	\end{align*}
	If $R=o(X^{1/\alpha_1+1/\alpha_2-1})$ as $X\to\infty$, 
	then $\cR_{\alpha_1,\alpha_2}(N) \gg N^{1/\alpha_1+1/\alpha_2-1}$ as $N\to\infty$ due to Lemma~\ref{lemDes2}.
	Hence, we show that $R=o(X^{1/\alpha_1+1/\alpha_2-1})$ below.
	\par
	\setcounter{count}{0}
	\textbf{Step~\num.}
	By \eqref{eqH''}, it is clear that 
	\[
	R_0 \le X^{1/\alpha_2-1} \quad\text{and}\quad
	R_1 \ll X^{1/\alpha_1+1/\alpha_2-1}(\log X)^{-1}.
	\]
	Also, $\abs{f'_1(x)} \asymp_{\alpha_1} X^{1-1/\alpha_1}$ when $x\in\cI$.
	Using the exponent pair $(1/2,1/2)$ (see Lemma~\ref{exp-pair}), we have 
	\begin{align*}
		\sum_{n\in\cI\cap\mathbb{Z}} e\bigl( hf_1(n) \bigr)
		&\ll_{\alpha_1} (hX^{1-1/\alpha_1})^{1/2}(X^{1/\alpha_1})^{1/2} + (hX^{1-1/\alpha_1})^{-1}\\
		&= h^{1/2}X^{1/2} + h^{-1}X^{1/\alpha_1-1}
		\ll h^{1/2}X^{1/2}
	\end{align*}
	for every $h\ge1$.
	Thus, 
	\begin{align*}
		\sum_{1\le h\le H_1} h^{-1}\abs{\sum_{n\in\cI\cap\mathbb{Z}} e\bigl( hf_1(n) \bigr)}
		&\ll_{\alpha_1} \sum_{1\le h\le H_1} h^{-1}\cdot h^{1/2}X^{1/2}\\
		&\ll H_1^{1/2}X^{1/2}
		= X^{1/2}(\log X)^{1/2},
	\end{align*}
	where we have used \eqref{eqH''} to obtain the last equality.
	Therefore, 
	\[
	R_{2,1} \ll_{\alpha_1} X^{1/\alpha_2-1/2}(\log X)^{1/2}.
	\]
	\par
	\textbf{Step~\num.}
	The first and second derivatives $f'_2$ and $f''_2$ of $f_2$ are below: 
	\begin{align}
		f'_2(x) &= -\alpha_1\alpha_2^{-1}x^{\alpha_1-1}(X-x^{\alpha_1})^{1/\alpha_2-1},\label{eq:f2'}\\
		\begin{split}
			f''_2(x) &= -\alpha_1\alpha_2^{-1}\bigl( (\alpha_1-1)X+(1-\alpha_1\alpha_2^{-1})x^{\alpha_1} \bigr)
			x^{\alpha_1-2}(X-x^{\alpha_1})^{1/\alpha_2-2}\\
			&= \bigl( (\alpha_1-1)X+(1-\alpha_1\alpha_2^{-1})x^{\alpha_1} \bigr)x^{-1}(X-x^{\alpha_1})^{-1}f'_2(x).
		\end{split}\label{eq:f2''}
	\end{align}
	When $x\in\cI$, we have $\abs{f''_2(x)} \asymp_{\alpha_1,\alpha_2} X^{1/\alpha_2-2/\alpha_1}$.
	This and Lemma~\ref{2ndderiv} imply that 
	\begin{align*}
		\sum_{n\in\cI\cap\mathbb{Z}} e\bigl( hf_2(n) \bigr)
		&\ll_{\alpha_1,\alpha_2} X^{1/\alpha_1}(hX^{1/\alpha_2-2/\alpha_1})^{1/2}+(hX^{1/\alpha_2-2/\alpha_1})^{-1/2}\\
		&= h^{1/2}X^{1/2\alpha_2} + h^{-1/2}X^{1/\alpha_1-1/2\alpha_2}
	\end{align*}
	for every $h\ge1$.
	Therefore, 
	\begin{align*}
		R_{2,2} &\ll_{\alpha_1,\alpha_2} \sum_{1\le h\le H_2} \min\{ X^{1/\alpha_2-1}, h^{-1} \}
		( h^{1/2}X^{1/2\alpha_2} + h^{-1/2}X^{1/\alpha_1-1/2\alpha_2} )\\
		&\le \sum_{1\le h\le H_2} ( h^{-1}\cdot h^{1/2}X^{1/2\alpha_2} + X^{1/\alpha_2-1}\cdot h^{-1/2}X^{1/\alpha_1-1/2\alpha_2} )\\
		&\ll H_2^{1/2}X^{1/2\alpha_2}+H_2^{1/2}X^{1/\alpha_1+1/2\alpha_2-1}\\
		&\ll H_2^{1/2}X^{1/2\alpha_2}
		= X^{1/2}(\log X)^{1/2},
	\end{align*}
	where we have used \eqref{eqH''} to obtain the last equality.
	\par
	\textbf{Step~\num.}
	Let $h_1h_2<0$. From now on, set $f=h_1f_1+h_2f_2$.
	When $x\in\cI$, we have 
	\[
	\abs{f''(x)} \asymp_{\alpha_1,\alpha_2} \abs{h_1}X^{1-2/\alpha_1}+\abs{h_2}X^{1/\alpha_2-2/\alpha_1}.
	\]
	This and Lemma~\ref{2ndderiv} imply that 
	\begin{align*}
		\sum_{n\in\cI\cap\mathbb{Z}} e\bigl( f(n) \bigr)
		&\ll_{\alpha_1,\alpha_2} X^{1/\alpha_1}( \abs{h_1}^{1/2}X^{1/2-1/\alpha_1} + \abs{h_2}^{1/2}X^{1/2\alpha_2-1/\alpha_1} )\\
		&\qquad\qquad+ \min\{ \abs{h_1}^{-1/2}X^{1/\alpha_1-1/2}, \abs{h_2}^{-1/2}X^{1/\alpha_1-1/2\alpha_2} \}\\
		&\le \abs{h_1}^{1/2}X^{1/2} + \abs{h_2}^{1/2}X^{1/2\alpha_2}
		+ \abs{h_2}^{-1/2}X^{1/\alpha_1-1/2\alpha_2}.
	\end{align*}
	Thus, 
	\begin{align*}
		R_{3,2} &\ll_{\alpha_1,\alpha_2} \sum_{\substack{1\le h_1\le H_1 \\ 1\le h_2\le H_2}}
		\abs{h_1}^{-1}\min\{ X^{1/\alpha_2-1}, \abs{h_2}^{-1} \}\\
		&\qquad\qquad\qquad ( \abs{h_1}^{1/2}X^{1/2} + \abs{h_2}^{1/2}X^{1/2\alpha_2}
		+ \abs{h_2}^{-1/2}X^{1/\alpha_1-1/2\alpha_2} ).
	\end{align*}
	By the inequalities 
	\[
	\abs{h_1}^{-1}\min\{ X^{1/\alpha_2-1}, \abs{h_2}^{-1} \}
	\le \abs{h_1}^{-1}\abs{h_2}^{-1},\ \abs{h_1}^{-1}X^{1/\alpha_2-1},
	\]
	it turns out that 
	\begin{align*}
		R_{3,2} &\ll_{\alpha_1,\alpha_2} \sum_{\substack{1\le h_1\le H_1 \\ 1\le h_2\le H_2}}
		\bigl( h_1^{-1}h_2^{-1}(\abs{h_1}^{1/2}X^{1/2} + \abs{h_2}^{1/2}X^{1/2\alpha_2})\\
		&\qquad\qquad\qquad+ h_1^{-1}X^{1/\alpha_2-1}\cdot\abs{h_2}^{-1/2}X^{1/\alpha_1-1/2\alpha_2} \bigr)\\
		&\ll H_1^{1/2}(\log H_2)X^{1/2} + (\log H_1)H_2^{1/2}X^{1/2\alpha_2}\\
		&\quad+ (\log H_1)H_2^{1/2}X^{1/\alpha_1+1/2\alpha_2-1}\\
		&\ll_{\alpha_2} X^{1/2}(\log X)^{3/2} + X^{1/2}(\log X)^{1/2}(\log\log X)\\
		&\quad+ X^{1/\alpha_1-1/2}(\log X)^{1/2}(\log\log X)\\
		&\ll X^{1/2}(\log X)^{3/2},
	\end{align*}
	where we have used \eqref{eqH''} to obtain the second to last inequality.
	\par
	\textbf{Step~\num.}
	Let $h_1h_2>0$. When $x\in\cI$, we have 
	\begin{equation}
		\abs{f''_1(x)} \asymp_{\alpha_1} X^{1-2/\alpha_1} \quad\text{and}\quad
		\abs{f''_2(x)} \asymp_{\alpha_1,\alpha_2} X^{1/\alpha_2-2/\alpha_1}.
		\label{eq:f1''f2''}
	\end{equation}
	Take a small $\delta_2=\delta_2(\alpha_1,\alpha_2)\in(0,1)$ such that 
	if $\abs{h_2}\le\delta_2X^{1-1/\alpha_2}$ and $x\in\cI$, then 
	\begin{equation}
		\abs{f''(x)} = \abs{h_1f''_1(x)}-\abs{h_2f''_2(x)} \asymp_{\alpha_1,\alpha_2} \abs{h_1}X^{1-2/\alpha_1}.
		\label{eq24}
	\end{equation}
	(Note that $h_1$ is a non-zero integer.) 
	If $\abs{h_2}\le\delta_2X^{1-1/\alpha_2}$, Lemma~\ref{2ndderiv} and \eqref{eq24} imply that 
	\begin{align*}
		\sum_{n\in\cI\cap\mathbb{Z}} e\bigl( f(n) \bigr)
		&\ll_{\alpha_1,\alpha_2} X^{1/\alpha_1}(\abs{h_1}X^{1-2/\alpha_1})^{1/2} + (\abs{h_1}X^{1-2/\alpha_1})^{-1/2}\\
		&= \abs{h_1}^{1/2}X^{1/2} + \abs{h_1}^{-1/2}X^{1/\alpha_1-1/2}
		\ll \abs{h_1}^{1/2}X^{1/2}.
	\end{align*}
	This yields that 
	\begin{align*}
		R_{3,1,1} &\coloneqq \sum_{\substack{1\le\abs{h_1}\le H_1 \\ 1\le\abs{h_2}\le\delta_2X^{1-1/\alpha_2} \\ h_1h_2>0}}
		\abs{h_1}^{-1}\min\{ X^{1/\alpha_2-1}, \abs{h_2}^{-1} \}\abs{\sum_{n\in\cI\cap\mathbb{Z}} e\bigl( f(n) \bigr)}\\
		&\ll_{\alpha_1,\alpha_2} \sum_{\substack{1\le h_1\le H_1 \\ 1\le h_2\le\delta_2X^{1-1/\alpha_2}}}
		h_1^{-1}X^{1/\alpha_2-1}\cdot\abs{h_1}^{1/2}X^{1/2}\\
		&\ll H_1^{1/2}\cdot\delta_2X^{1-1/\alpha_2}\cdot X^{1/\alpha_2-1/2}
		\le X^{1/2}(\log X)^{1/2},
	\end{align*}
	where we have used \eqref{eqH''} and $\delta_2\in(0,1)$ to obtain the last inequality.
	\par
	\textbf{Step~\num.}
	Now, we investigate the third derivative $f'''$ of $f$ to estimate 
	\[
	R_{3,1,2} \coloneqq \sum_{\substack{1\le\abs{h_1}\le H_1 \\ \delta_2X^{1-1/\alpha_2}<\abs{h_2}\le H_2 \\ h_1h_2>0}}
	\abs{h_1}^{-1}\min\{ X^{1/\alpha_2-1}, \abs{h_2}^{-1} \}\abs{\sum_{n\in\cI\cap\mathbb{Z}} e\bigl( f(n) \bigr)}.
	\]
	Let $x\in\cI$. We show that 
	(i) $f'''(x)<0$ if $h_1,h_2>0$; 
	(ii) $f'''(x)>0$ if $h_1,h_2<0$.
	By \eqref{eq:f2''}, the third derivative $f'''_2$ of $f_2$ is 
	\begin{equation}
	\begin{split}
		f'''_2(x) &= \bigl( (\alpha_1-1)X+(1-\alpha_1\alpha_2^{-1})x^{\alpha_1} \bigr)x^{-1}(X-x^{\alpha_1})^{-1}f''_2(x)\\
		&\quad+ (1-\alpha_1\alpha_2^{-1})\alpha_1x^{\alpha_1-1}\cdot x^{-1}(X-x^{\alpha_1})^{-1}f'_2(x)\\
		&\quad+ \bigl( (\alpha_1-1)X+(1-\alpha_1\alpha_2^{-1})x^{\alpha_1} \bigr)(-x^{-2})(X-x^{\alpha_1})^{-1}f'_2(x)\\
		&\quad+ \bigl( (\alpha_1-1)X+(1-\alpha_1\alpha_2^{-1})x^{\alpha_1} \bigr)x^{-1}\cdot\alpha_1x^{\alpha_1-1}(X-x^{\alpha_1})^{-2}f'_2(x).
	\end{split}\label{eq21}
	\end{equation}
	By \eqref{eq:f2''}, the third and fourth lines of the right-hand side of \eqref{eq21} are equal to 
	\[
	- x^{-1}f''_2(x) \quad\text{and}\quad \alpha_1x^{\alpha_1-1}(X-x^{\alpha_1})^{-1}f''_2(x),
	\]
	respectively. Thus, the sum of the first, third, and fourth lines of the right-hand side of \eqref{eq21} is equal to 
	\begin{align*}
		&\quad \bigl( (\alpha_1-1)X+(1-\alpha_1\alpha_2^{-1})x^{\alpha_1} \bigr)x^{-1}(X-x^{\alpha_1})^{-1}f''_2(x)\\
		&\quad- x^{-1}f''_2(x) + \alpha_1x^{\alpha_1-1}(X-x^{\alpha_1})^{-1}f''_2(x)\\
		&= \bigl( (\alpha_1-2)X+(2+\alpha_1-\alpha_1\alpha_2^{-1})x^{\alpha_1} \bigr)x^{-1}(X-x^{\alpha_1})^{-1}f''_2(x).
	\end{align*}
	By this and \eqref{eq21}, 
	\begin{equation}
	\begin{split}
		f'''_2(x) &= \bigl( (\alpha_1-2)X+(2+\alpha_1-\alpha_1\alpha_2^{-1})x^{\alpha_1} \bigr)x^{-1}(X-x^{\alpha_1})^{-1}f''_2(x)\\
		&\quad+ (1-\alpha_1\alpha_2^{-1})\alpha_1x^{\alpha_1-2}(X-x^{\alpha_1})^{-1}f'_2(x).
	\end{split}\label{eq:f2'''}
	\end{equation}
	By $c_1=1/2$, we have 
	\begin{equation}
	\begin{split}
		&\quad (\alpha_1-2)X+(2+\alpha_1-\alpha_1\alpha_2^{-1})x^{\alpha_1}\\
		&\ge (\alpha_1-2)X+(2+\alpha_1-\alpha_1\alpha_2^{-1})c_1X\\
		&= (-2+3\alpha_1-\alpha_1\alpha_2^{-1})X/2\\
		&\ge (3\alpha_1-3)X/2 > 0.
	\end{split}\label{eq22}
	\end{equation}
	Also, by \eqref{eq:f2'} and \eqref{eq:f2''}, both $f'_2(x)$ and $f''_2(x)$ are negative.
	From this, \eqref{eq:f2'''}, and \eqref{eq22}, it follows that $f'''_2(x)<0$.
	Thus, if $h_1,h_2>0$ (resp.\ $h_1,h_2<0$), then both $h_1f'''_1(x)$ and $h_2f'''_2(x)$ are negative (resp.\ positive), 
	which implies that $f'''(x)<0$ (resp.\ $f'''(x)>0$).
	\par
	\textbf{Step~\num.}
	We calculate and estimate $f'''(x) - (\alpha_1-2)x^{-1}f''(x)$.
	By \eqref{eq:f2''} and \eqref{eq:f2'''}, 
	\begin{align*}
		&\quad f'''_2(x) - (\alpha_1-2)x^{-1}f''_2(x)\\
		&= (2\alpha_1-\alpha_1\alpha_2^{-1})x^{\alpha_1}x^{-1}(X-x^{\alpha_1})^{-1}f''_2(x)\\
		&\quad+ (1-\alpha_1\alpha_2^{-1})\alpha_1x^{\alpha_1-2}(X-x^{\alpha_1})^{-1}f'_2(x)\\
		&= \Bigl( (2\alpha_1-\alpha_1\alpha_2^{-1})x^{\alpha_1-1}(X-x^{\alpha_1})^{-1}\\
		&\qquad+ \frac{(1-\alpha_1\alpha_2^{-1})\alpha_1x^{\alpha_1-1}}{(\alpha_1-1)X+(1-\alpha_1\alpha_2^{-1})x^{\alpha_1}} \Bigr)f''_2(x).
	\end{align*}
	Thus, 
	\begin{equation}
	\begin{split}
		&\quad f'''(x) - (\alpha_1-2)x^{-1}f''(x)
		= h_2\bigl( f'''_2(x) - (\alpha_1-2)x^{-1}f''_2(x) \bigr)\\
		&= \Bigl( (2\alpha_1-\alpha_1\alpha_2^{-1})x^{\alpha_1-1}(X-x^{\alpha_1})^{-1}\\
		&\qquad+ \frac{(1-\alpha_1\alpha_2^{-1})\alpha_1x^{\alpha_1-1}}{(\alpha_1-1)X+(1-\alpha_1\alpha_2^{-1})x^{\alpha_1}} \Bigr)h_2f''_2(x).
	\end{split}\label{eq23}
	\end{equation}
	By \eqref{eq23} and \eqref{eq:f1''f2''}, 
	\begin{equation}
		\abs{f'''(x) - (\alpha_1-2)x^{-1}f''(x)}
		\asymp_{\alpha_1,\alpha_2} \abs{h_2}X^{1/\alpha_2-3/\alpha_1}
		\label{eq25}
	\end{equation}
	for every $x\in\cI$.
	\par
	\textbf{Step~\num.}
	Let $h_1h_2>0$ and $\delta_2X^{1-1/\alpha_2}<\abs{h_2}\le H_2$.
	By \eqref{eq25}, we can take a small $\delta_0=\delta_0(\alpha_1,\alpha_2,\delta_2)\in(0,1)$ such that 
	if $x\in\cI$ and $\abs{f''(x)}\le\delta_0X^{1-2/\alpha_1}$, then 
	\begin{align*}
		\abs{f'''(x)} &\ge \abs{f'''(x) - (\alpha_1-2)x^{-1}f''(x)}-\abs{(\alpha_1-2)x^{-1}f''(x)}\\
		&\gg_{\alpha_1,\alpha_2,\delta_2,\delta_0} \abs{h_2}X^{1/\alpha_2-3/\alpha_1}.
	\end{align*}
	Also, if $x\in\cI$ and $\abs{f''(x)}\le\delta_0X^{1-2/\alpha_1}$, then 
	\begin{align*}
		\abs{f'''(x)} &\le \abs{f'''(x) - (\alpha_1-2)x^{-1}f''(x)}+\abs{(\alpha_1-2)x^{-1}f''(x)}\\
		&\ll_{\alpha_1,\alpha_2} \abs{h_2}X^{1/\alpha_2-3/\alpha_1}+X^{1-3/\alpha_1}
		\ll_{\delta_2} \abs{h_2}X^{1/\alpha_2-3/\alpha_1}
	\end{align*}
	due to \eqref{eq25} and $\delta_0\in(0,1)$.
	\par
	\textbf{Step~\num.}
	By $\alpha_1,\alpha_2\in(1,2)$ and $1/\alpha_1+6/\alpha_2>13/3$, 
	the inequality 
	\[
	\max\{ 0, 7/6-1/2\alpha_1-1/\alpha_2 \} < \min\{ 1/\alpha_1, 2/\alpha_2-1 \}
	\]
	holds. Indeed, the inequality $7/6-1/2\alpha_1-1/\alpha_2 < 1/\alpha_1$ follows from 
	\[
	3/\alpha_1+2/\alpha_2
	= (1/3)(1/\alpha_1+6/\alpha_2) + 8/3\alpha_1
	> 13/9 + 4/3 = 25/3 > 7/3;
	\]
	the inequality $7/6-1/2\alpha_1-1/\alpha_2 < 2/\alpha_2-1$ follows from $1/\alpha_1+6/\alpha_2 > 13/3$; 
	the other inequalities are trivial.
	Also, take a real number $c_0$ with 
	\begin{equation}
		\max\{ 0, 7/6-1/2\alpha_1-1/\alpha_2 \} < c_0 < \min\{ 1/\alpha_1, 2/\alpha_2-1 \},
		\label{eq:c0}
	\end{equation}
	and an integer $m\ge2$ with 
	\begin{equation}
		1/\alpha_1+1/\alpha_2 > 3/2+2\cdot4^{-m}c_0.
		\label{eq:m}
	\end{equation}
	\par
	\textbf{Step~\num.}
	Let $h_1h_2>0$ and $\delta_2X^{1-1/\alpha_2}<\abs{h_2}\le H_2$.
	Define the positive numbers $\gamma_1<\cdots<\gamma_m<1/\alpha_1$ as $\gamma_k=4^{k-m}c_0$.
	Partition the interval $\cI$ into the following subsets $\cI_1,\ldots,\cI_{m+1}$: 
	\begin{align*}
		\cI_1 &= \{ x\in\cI : \abs{f''(x)} > \delta_0X^{1-2/\alpha_1-\gamma_1} \},\\
		\cI_k &= \{ x\in\cI : \delta_0X^{1-2/\alpha_1-\gamma_k} < \abs{f''(x)} \le \delta_0X^{1-2/\alpha_1-\gamma_{k-1}} \} \quad(2\le k\le m),\\
		\cI_{m+1} &= \{ x\in\cI : \abs{f''(x)} \le \delta_0X^{1-2/\alpha_1-\gamma_m} \}.
	\end{align*}
	Since $f''$ is monotone due to step~5, 
	for $k=1,\ldots,m+1$, the set $\cI_k$ is a union of at most two intervals.
	Moreover, for every integer $2\le k\le m+1$, 
	\begin{equation}
		\abs{\cI_k} \ll_{\alpha_1,\alpha_2,\delta_2,\delta_0} \abs{h_2}^{-1}X^{1+1/\alpha_1-1/\alpha_2-\gamma_{k-1}}.
		\label{eq:|Ik|}
	\end{equation}
	Indeed, if $2\le k\le m+1$ and $x_1,x_2\in\cI_k$, then the mean value theorem and step~7 imply that 
	\begin{align*}
		2\delta_0X^{1-2/\alpha_1-\gamma_{k-1}}
		&\ge \abs{f''(x_2)-f''(x_1)}
		= \abs{f'''(x_3)(x_2-x_1)}\\
		&\gg_{\alpha_1,\alpha_2,\delta_2,\delta_0} \abs{h_2}X^{1/\alpha_2-3/\alpha_1}\abs{x_2-x_1}
	\end{align*}
	for some $x_3$ between $x_1$ and $x_2$ inclusive; therefore, 
	\[
	\abs{x_2-x_1} \ll_{\alpha_1,\alpha_2,\delta_2,\delta_0} \abs{h_2}^{-1}X^{1+1/\alpha_1-1/\alpha_2-\gamma_{k-1}}.
	\]
	\par
	\textbf{Step~\num.}
	Let $h_1h_2>0$ and $\delta_2X^{1-1/\alpha_2}<\abs{h_2}\le H_2$.
	Define the sums $R_{3,1,2,k}$, $k=1,\ldots,m+1$, as 
	\[
	R_{3,1,2,k} = \sum_{\substack{1\le\abs{h_1}\le H_1 \\ \delta_2X^{1-1/\alpha_2}<\abs{h_2}\le H_2 \\ h_1h_2>0}}
	\abs{h_1}^{-1}\min\{ X^{1/\alpha_2-1}, \abs{h_2}^{-1} \}\abs{\sum_{n\in\cI_k\cap\mathbb{Z}} e\bigl( f(n) \bigr)}.
	\]
	Then 
	\begin{equation}
		R_{3,1,2} \le \sum_{k=1}^{m+1} R_{3,1,2,k}.
		\label{eq26}
	\end{equation}
	First, we estimate $R_{3,1,2,m+1}$.
	By step~7, 
	\begin{equation}
		\abs{f'''(x)} \asymp_{\alpha_1,\alpha_2,\delta_2,\delta_0} \abs{h_2}X^{1/\alpha_2-3/\alpha_1}
		\label{eq:f'''}
	\end{equation}
	for every $x\in\cI_{m+1}$.
	Lemma~\ref{3rdderiv}, \eqref{eq:f'''}, and \eqref{eq:|Ik|} imply that 
	\begin{align*}
		\sum_{n\in\cI_{m+1}\cap\mathbb{Z}} e\bigl( f(n) \bigr)
		&\ll_{\alpha_1,\alpha_2,\delta_2,\delta_0}
		\abs{h_2}^{-1}X^{1+1/\alpha_1-1/\alpha_2-\gamma_m}(\abs{h_2}X^{1/\alpha_2-3/\alpha_1})^{1/6}\\
		&\qquad\qquad\qquad+ (\abs{h_2}X^{1/\alpha_2-3/\alpha_1})^{-1/3}\\
		&= \abs{h_2}^{-5/6}X^{1+1/2\alpha_1-5/6\alpha_2-\gamma_m} + \abs{h_2}^{-1/3}X^{1/\alpha_1-1/3\alpha_2}.
	\end{align*}
	Thus, 
	\begin{align*}
		R_{3,1,2,m+1} &\ll_{\alpha_1,\alpha_2,\delta_2,\delta_0} \sum_{\substack{1\le h_1\le H_1 \\ \delta_2X^{1-1/\alpha_2}<h_2\le H_2}}
		h_1^{-1}h_2^{-1}( \abs{h_2}^{-5/6}X^{1+1/2\alpha_1-5/6\alpha_2-\gamma_m}\\
		&\qquad\qquad\qquad\qquad\qquad\qquad\qquad\qquad+ \abs{h_2}^{-1/3}X^{1/\alpha_1-1/3\alpha_2} )\\
		&\ll_{\delta_2} (\log H_1)\Bigl( (X^{1-1/\alpha_2})^{-5/6}X^{1+1/2\alpha_1-5/6\alpha_2-\gamma_m}\\
		&\qquad\qquad\qquad\qquad\qquad+ (X^{1-1/\alpha_2})^{-1/3}X^{1/\alpha_1-1/3\alpha_2} \Bigr)\\
		&= ( X^{1/6+1/2\alpha_1-\gamma_m} + X^{1/\alpha_1-1/3} )\log\log X,
	\end{align*}
	where we have used \eqref{eqH''} to obtain the last equality.
	By $\gamma_m=c_0$, $c_0>7/6-1/2\alpha_1-1/\alpha_2$ of \eqref{eq:c0}, and $\alpha_2\in(1,3/2)$, 
	it turns out that $R_{3,1,2,m+1}=o(X^{1/\alpha_1+1/\alpha_2-1})$ as $X\to\infty$.
	\par
	Next, we estimate $R_{3,1,2,1}$. By \eqref{eq:f1''f2''}, 
	\begin{equation}
		\delta_0X^{1-2/\alpha_1-\gamma_1} \le \abs{f''(x)} \ll_{\alpha_1,\alpha_2} X^{1-2/\alpha_2}\log X
		\label{eq27}
	\end{equation}
	for every $x\in\cI_1$.
	Lemma~\ref{2ndderiv}, \eqref{eq27}, and $\abs{\cI_1}\le\abs{\cI}\ll X^{1/\alpha_1}$ imply that 
	\begin{align*}
		\sum_{n\in\cI_1\cap\mathbb{Z}} e\bigl( f(n) \bigr)
		&\ll_{\alpha_1,\alpha_2,\delta_0} X^{1/\alpha_1}\cdot\frac{X^{1-2/\alpha_1}\log X}{X^{1/2-1/\alpha_1-\gamma_1/2}}
		+ X^{1/\alpha_1-1/2+\gamma_1/2}\\
		&= X^{1/2+\gamma_1/2}\log X + X^{1/\alpha_1-1/2+\gamma_1/2}.
	\end{align*}
	Thus, 
	\begin{align*}
		R_{3,1,2,1} &\ll_{\alpha_1,\alpha_2,\delta_0} \sum_{\substack{1\le h_1\le H_1 \\ \delta_2X^{1-1/\alpha_2}<h_2\le H_2}}
		h_1^{-1}h_2^{-1}( X^{1/2+\gamma_1/2}\log X + X^{1/\alpha_1-1/2+\gamma_1/2} )\\
		&\ll (\log H_1)(\log H_2)( X^{1/2+\gamma_1/2}\log X + X^{1/\alpha_1-1/2+\gamma_1/2} )\\
		&\ll_{\alpha_2} X^{1/2+\gamma_1/2}(\log X)^2\log\log X.
	\end{align*}
	By $\gamma_1=4^{1-m}c_0$ and \eqref{eq:m}, 
	it turns out that $R_{3,1,2,1}=o(X^{1/\alpha_1+1/\alpha_2-1})$ as $X\to\infty$.
	\par
	Finally, we estimate $R_{3,1,2,k}$ for $k=2,3,\ldots,m$.
	Let $2\le k\le m$ be an integer.
	Lemma~\ref{2ndderiv} and \eqref{eq:|Ik|} imply that 
	\begin{align*}
		\sum_{n\in\cI_k\cap\mathbb{Z}} e\bigl( f(n) \bigr)
		&\ll_{\alpha_1,\alpha_2,\delta_2,\delta_0}
		\abs{h_2}^{-1}X^{1+1/\alpha_1-1/\alpha_2-\gamma_{k-1}}\cdot\frac{X^{1-2/\alpha_1-\gamma_{k-1}}}{X^{1/2-1/\alpha_1-\gamma_k/2}}\\
		&\qquad\qquad\qquad+ X^{1/\alpha_1-1/2+\gamma_k/2}\\
		&= \abs{h_2}^{-1}X^{3/2-1/\alpha_2} + X^{1/\alpha_1-1/2+\gamma_k/2},
	\end{align*}
	where we have used $\gamma_k=4\gamma_{k-1}$ to obtain the last equality.
	Thus, 
	\begin{align*}
		R_{3,1,2,k} &\ll_{\alpha_1,\alpha_2,\delta_2,\delta_0} \sum_{\substack{1\le h_1\le H_1 \\ \delta_2X^{1-1/\alpha_2}<h_2\le H_2}}
		h_1^{-1}h_2^{-1}( \abs{h_2}^{-1}X^{3/2-1/\alpha_2} + X^{1/\alpha_1-1/2+\gamma_k/2} )\\
		&\ll_{\delta_2} (\log H_1)\bigl( (X^{1-1/\alpha_2})^{-1}X^{3/2-1/\alpha_2} + (\log H_2)X^{1/\alpha_1-1/2+\gamma_k/2} \bigr)\\
		&\ll_{\alpha_2} ( X^{1/2} + X^{1/\alpha_1-1/2+\gamma_k/2}\log X )\log\log X.
	\end{align*}
	By $\gamma_1<\cdots<\gamma_m=c_0$, $c_0<2/\alpha_2-1$ of \eqref{eq:c0}, and $1/\alpha_1+1/\alpha_2>3/2$, 
	it turns out that $R_{3,1,2,k}=o(X^{1/\alpha_1+1/\alpha_2-1})$ as $X\to\infty$.
	\par
	\textbf{Step~\num.}
	By steps~1--4, all of the values $R_0$, $R_1$, $R_{2,1}$, $R_{2,2}$, $R_{3,2}$, and $R_{3,1,1}$ are 
	$o(X^{1/\alpha_1+1/\alpha_2-1})$ as $X\to\infty$.
	By step~10, for every integer $1\le k\le m+1$, the value $R_{3,1,2,k}$ is $o(X^{1/\alpha_1+1/\alpha_2-1})$ as $X\to\infty$.
	By \eqref{eq26} and $R_{3,1}\le R_{3,1,1}+R_{3,1,2}$, 
	the value $R_{3,1}$ is also $o(X^{1/\alpha_1+1/\alpha_2-1})$ as $X\to\infty$.
	Therefore, $R=o(X^{1/\alpha_1+1/\alpha_2-1})$ as $X\to\infty$.
\end{proof}

\end{document}